\theoremstyle{plain}
\newtheorem{thm}{Theorem}[section]
\newtheorem{mainthm}{Main Theorem}
\newtheorem{prop}[thm]{Proposition}
\newtheorem{lemma}[thm]{Lemma}
\newtheorem{cor}[thm]{Corollary}
\theoremstyle{definition}
\newtheorem{df}[thm]{Definition}
\newtheorem*{notn*}{Notation}
\newtheorem{remark}[thm]{Remark}
\newtheorem{example}[thm]{Example}
\newcounter{enum_counter}
\DeclareMathOperator{\End}{End}
\DeclareMathOperator{\Tr}{Tr}
\DeclareMathOperator{\ord}{ord}
\DeclareMathOperator{\Jac}{Jac}
\DeclareMathOperator{\type}{type}
\def\Q{\mathbb{Q}}
\def\Z{\mathbb{Z}}
\def\F{\mathbb{F}}
\def\p{\mathfrak{p}}
\def\frf{\mathfrak{f}}
\newcommand{\cC}{\mathcal{C}}
\newcommand{\cF}{\mathcal{F}}
\newcommand{\cI}{\mathcal{I}}
\newcommand{\cO}{\mathcal{O}}
\newcommand{\cR}{\mathcal{R}}
\newcommand{\rad}[1]{\mathrm{rad}(#1)}
\newcommand{\vphi}{\varphi}
\newcommand{\set}[1]{\left\lbrace#1\right\rbrace }
\renewcommand{\bar}{\overline}
\renewcommand{\hat}{\widehat}
\title{Abelian varieties over finite fields and their groups of rational points}
\date{\today}
\author{Stefano Marseglia}
\address{Mathematical Institute, Utrecht University, P.O. Box 80010, 3508 TA, Utrecht, The Netherlands}
\email{s.marseglia@uu.nl}
\thanks{Marseglia is supported by NWO grant VI.Veni.202.107.}
\author{Caleb Springer}
\address{Department of Mathematics, University College London, Gower Street, London WC1H 0AY,
UK\\
and The Heilbronn Insitute for Mathematical Research, Bristol, UK}
\email{c.springer@ucl.ac.uk}
\thanks{ Springer is partially supported by National Science Foundation award {CNS-2001470} and by the Additional Funding Programme for Mathematical Sciences, delivered by EPSRC (EP/V521917/1) and the Heilbronn Institute for Mathematical Research.}
\keywords{Abelian variety, finite fields, group of rational points}
\subjclass[2020]{Primary:   14K15. 
                 Secondary: 14G15, 
                            11G10} 
\begin{document}

\begin{abstract}
  We study the groups of rational points of abelian varieties defined over a finite field $ \mathbb{F}_q$ whose endomorphism rings are commutative, or, equivalently, whose isogeny classes are determined by squarefree characteristic polynomials. When $\mathrm{End}(A)$ is locally Gorenstein, we show that the group structure of $A(\mathbb{F}_q)$ is determined by $\mathrm{End}(A)$. Moreover, we prove that the same conclusion is attained if $\mathrm{End}(A)$ has local Cohen-Macaulay type at most $ 2$, under the additional assumption that $A$ is ordinary or $q$ is prime. The result in the Gorenstein case is used to characterize squarefree cyclic isogeny classes in terms of conductor ideals. Going in the opposite direction, we characterize squarefree isogeny classes of abelian varieties with $N$ rational points in which every abelian group of order $N$ is realized as a group of rational points. Finally, we study when an abelian variety $A$ over $\mathbb{F}_q$ and its dual $A^\vee$ succeed or fail to satisfy several interrelated properties, namely $A\cong A^\vee$, $A(\mathbb{F}_q)\cong A^\vee(\mathbb{F}_q)$, and $\mathrm{End}(A)=\mathrm{End}(A^\vee)$. In the process, we exhibit a sufficient condition for $A\not\cong A^\vee$ involving the local Cohen-Macaulay type of $\mathrm{End}(A)$. In particular, such an abelian variety $A$ is not a Jacobian, or even principally polarizable.
\end{abstract}

\maketitle

\section{Introduction}
The groups of rational points of abelian varieties defined over a finite field~$\F_q$ have recently received a considerable amount of attention. 
For example, in \cite{HK21}, Howe and Kedlaya showed that every positive integer occurs as the order of the group of rational points of an abelian variety over~$\F_2$.
In \cite{vBCLPS21}, Van~Bommel, Costa, Li, Poonen and Smith prove, among other results,  a version of this statement over arbitrary finite fields~$\F_q$, although only sufficiently large orders are realizable if~$q \geq 7$.

These statement are, in fact, results about isogeny classes.
Indeed, two abelian varieties~$A$ and~$B$ are isogenous over~$\F_q$ if and only if~$\#A(\F_{q^n}) = \#B(\F_{q^n})$ for all~$n\geq1$, or equivalently, if~$h_A(x) = h_B(x)$, where~$h_A, h_B\in \Z[x]$ are the characteristic polynomials of  the Frobenius endomorphisms of~$A$ and~$B$, respectively; see \cite[Thm.~1.(c)]{Tate66}.
Moreover, one has~$\#A(\F_q)=h_A(1)$.

The recent results mentioned above concerning cardinalities were upgraded to statements about finite abelian groups in \cite{MarSpr_pts_PAMS}: for each~$q$ in~$\{2,3,4,5\}$, we showed that every finite abelian group is isomorphic to the group of rational points of some abelian variety over~$\F_q$.
 This upgrade, however, does not work on the level of isogeny classes. 
Indeed, the group structure of~$A(\F_q)$ is not uniquely determined by its isogeny class. This phenomenon is observed even for elliptic curves. In this paper, we seek to understand and describe this extra level of structure.

\subsection*{Notation and conventions}
Before presenting our main results, we set some notation and conventions.
Throughout the paper, all isogenies and morphisms between abelian varieties over~$\F_q$ are defined over the base field~$\F_q$.
In particular, given an abelian variety~$A$  over~$\F_q$ with characteristic polynomial~$h$, we denote its ($\F_q$-rational) endomorphism ring by~$\End(A)$ and its ($\F_q$-)isogeny class by~$\cI_h$.
 We say that~$A$ and~$\cI_h$ are \emph{squarefree} if
$h$ is a squarefree polynomial.
An equivalent definition of squarefree is given by requiring the endomorphism algebra~$\End(A)\otimes_\Z \Q$ to be commutative; see \cite[Thm.~2.(c)]{Tate66}.
See also \cite[Lem.~2.3]{MarSpr_pts_PAMS} for a comparison with other notions of squarefree.

Given a squarefree isogeny class~$\cI_h$ over~$\F_q$, set~$K=\Q[x]/(h)$ and let~$\pi$ be the class of~$x$ in~$K$. We will denote by~$\cO_K$ the maximal order of~$K$.
For every~$A$ in the isogeny class, as in \cite[\S3.1]{Wat69}, we fix an isomorphism~$\End(A)\otimes_\Z \Q \cong K$ which sends the Frobenius endomorphism of~$A$ to~$\pi$.
From now on, we will identify~$\End(A)$ with its image inside~$K$, which is an order.
Under this identification, the Rosati involution of~$A$ acts as the complex conjugation~$x\mapsto \bar{x}$ in~$K$. Note that~$\bar\pi=q/\pi$.
In particular,  if~$\End(A)=S\subset K$ then~$\End(A^\vee)=\bar{S}$, where~$A^\vee$ denotes the dual abelian variety of~$A$.

 \subsection{Groups of rational points and endomorphism rings}
As noted above, given an abelian variety~$A$ over a finite field~$\F_q$, the sequence~$(\#A(\F_{q^n}))_{n\geq 1}$  of point counts is an isogeny invariant, but the group structure of~$A(\F_q)$ is not.
Classifying abelian varieties over finite fields up to isogeny is the same as classifying them according to their endomorphism algebra.
A more precise classification is given by the endomorphism ring, which is an order in the endomorphism algebra.
When~$E$ is an elliptic curve over~$\F_q$, the group structure of~$E(\F_{q^n})$ is uniquely determined for all~$n\geq 1$ by the endomorphism ring~$\End(E)$; see \cite[Thm.~1]{Lenstra96}.

In Main Theorem~\ref{main-thm:main-tools}, we exhibit a similar result for abelian varieties of arbitrary dimension, under certain hypotheses on the endomorphism ring which are automatically satisfied in the case of elliptic curves. 
In particular, we use the notion of the (Cohen-Macaulay) type of an order~$S$ at a prime~$\p$.
This type, denoted~$\type_\p(S)$, is defined as the minimal number of generators of~$(S^t)_\p= S^t\otimes_S S_\p$, where~$S_\p$ is the localization of~$S$ at~$\p$ and~$S^t$ is the trace dual ideal of~$S$.
The order~$S$ is Gorenstein at~$\p$ if~$\type_\p(S) = 1$. 
We say that~$S$ is Gorenstein, if it so at every prime. 
See Section~\ref{sec:frac_ideals} for definitions and details.
Recall that an abelian variety~$A$ over~$\F_q$ is called \emph{ordinary} if the coefficient of~$x^{\dim A}$ in the characteristic polynomial~$h(x)$ of~$A$ is coprime to~$q$.
\begin{mainthm}
  \label{main-thm:main-tools}
  Let~$A$ be an abelian variety in a squarefree isogeny class~$\cI_h$ over~$\F_q$.
  Write~$S=\End(A)$ and fix~$n\geq 1$.
  \begin{enumerate}[(a)]
    \item  
    If~$S$ is Gorenstein at all prime ideals containing~$(1-\pi^n)$, then 
   ~$$
      A(\F_{q^n}) \cong 
      \frac{S}{(1-\pi^n)S}
   ~$$ 
    are isomorphic as~$S$-modules. 
    \label{main-thm:main-tools:Gor}
    \item Assume~$\cI_h$ is ordinary \emph{({\bf Ord})} or~$q = p$ is prime \emph{({\bf CS})}.
    If~$\type_\p(S)\leq 2$
    for every prime~$\p$ of~$S$ above~$(1-\pi^n)$,
    then
    \[ 
      A(\F_{q^n})\cong \frac{S}{(1-\pi^n) S}
    \]
    are isomorphic as~$\Z$-modules. \label{main-thm:main-tools:cmtype2}
  \end{enumerate}
\end{mainthm}
In contrast, in Example~\ref{ex:diff_grps_same_end}, we show that it is possible to have~$A(\F_q)\not\cong B(\F_q)$ for isogenous abelian varieties~$A$ and~$B$ over~$\F_q$, even if~$\End(A) = \End(B)$.
This example, like all the others in this paper, has been computed with the help of Magma \cite{Magma}. 

Part \ref{main-thm:main-tools:Gor}, appearing in the text as Corollary~\ref{cor:Gorenstein}, is proven by generalizing the methods of \cite{Lenstra96} and \cite{Springer21}.
Specifically, we view the group of rational points $A(\F_q)$ of an abelian variety~$A$  over~$\F_q$ as a module over the endomorphism ring $\End(A)$ and use the Gorenstein property to describe the module, as desired.

For Part \ref{main-thm:main-tools:cmtype2},
we use the additional assumptions to consider the abelian variety~$A$ itself to ``be'' a module over~$\End(A)$, as we now explain.
In \cite{Del69}, Deligne constructed an equivalence between the category of ordinary abelian varieties over a finite field~$\F_q$ and the category of free~$\Z$-modules with a ``Frobenius''-like endomorphism.
In \cite{CentelegheStix15}, Centeleghe-Stix extended Deligne's result, using a different functor, to the category of abelian varieties over a prime field~$\F_p$ whose characteristic polynomial does not have real roots. 

Given a squarefree isogeny class~$\cI_h$ over~$\F_q$, we write {\bf Ord} for the condition that~$\cI_h$ is ordinary, and {\bf CS} for the condition that~$q = p$ is prime. Note that, if~$\cI_h$ is squarefree, then the characteristic polynomial~$h$ does not have real roots.
If we restrict the functor of Deligne (resp.~Centeleghe-Stix) to a particular squarefree isogeny class~$\cI_h$ satisfying {\bf Ord} (resp.~{\bf CS}), then the modules in the image of the functor are precisely the fractional~$\Z[\pi,\bar \pi]$-ideals in the endomorphism algebra~$K = \Q[x]/(h)$. 
See Section~\ref{sec:ord-cs} below, or \cite{MarAbVar18} for a detailed account.
In particular, this lends itself to another route for describing groups of rational points in terms of orders and fractional ideals in the endomorphism algebra; see Theorem~\ref{thm:cat_eq}. This description allows us to deduce Main Theorem~\ref{main-thm:main-tools}.\ref{main-thm:main-tools:cmtype2}, written as Proposition~\ref{prop:A_cmtype2_gp} below.

Beyond Main Theorem~\ref{main-thm:main-tools}, the techniques and perspectives introduced in this section continue to be used extensively throughout the paper.
In Section~\ref{sec:frac_ideals}, we recall the necessary background and prove some foundational results regarding orders in \'etale algebras, which we then use in the remainder of the paper.

\subsection{Cyclicity}\
In Section~\ref{sec:coprime_conductor}, we study isogeny classes which are \emph{cyclic}, meaning that every abelian variety in the isogeny class has a cyclic group of points. 
A criterion for cyclicity which only involves the characteristic polynomial was given in \cite[Thm.~2.2]{Giangreco-Maidana19}.
Although this criterion applies \emph{a priori} to all isogeny classes, we prove in Theorem~\ref{thm:general_cyclic_sqfree} that an isogeny class over~$\F_q$ is cyclic if and only if it contains a variety of the form~$A_{\text{sf}}\times A_1$ where~$A_{\text{sf}}$ is squarefree and~$A_1$ has only one rational point. 
Moreover,~$A_1$ must be~$0$-dimensional if~$q \geq 5$ by the Weil bounds.

\newpage
We provide a new criterion for cyclicity, written below as Theorem~\ref{thm:coprime_iff_cyclic}.

\begin{mainthm} 
  \label{mainthm:coprime_iff_cyclic}
  Consider a squarefree isogeny class~$\cI_h$ of abelian varieties over~$\F_q$.
  Let~$\pi$ be the class of~$x$ in the endomorphism algebra~$K = \Q[x]/(h)$.
  The isogeny class~$\cI_h$ is cyclic if and only if~$(1-\pi)\Z[\pi, \overline\pi]$ is coprime to the conductor~$\frf = (\cO_K : \Z[\pi, \overline\pi])$.
\end{mainthm}

Rather than relating the property of cyclicity to the coefficients of the characteristic polynomial as in~\cite{Giangreco-Maidana19}, Main Theorem~\ref{mainthm:coprime_iff_cyclic} relates cyclicity to the algebraic properties of orders in the endomorphism algebra. 
In particular, it shows that the property of cyclicity is equivalent to the local maximality of the order~$\Z[\pi, \overline\pi]$ generated by Frobenius and Verschiebung at all primes over~$(1-\pi)$.
A~key ingredient in our proof is Main Theorem~\ref{main-thm:main-tools}.\ref{main-thm:main-tools:Gor}, applied to abelian varieties~$A$ with maximal endomorphism ring.

\subsection{Richness and non-cyclic groups}\
In contrast to cyclic isogeny classes, we inspect the opposite extreme in Section~\ref{sec:non-cyclic}.
We say that a squarefree isogeny class~$\cI_h$ is \emph{rich} if every abelian group of order~$h(1)$ occurs as the group of rational points of some abelian variety in~$\cI_h$.
In previous work by the authors, it was shown that, for each~$N\geq 1$, there are infinitely many rich squarefree isogeny classes~$\cI_h$ over~$\F_2$ with~$N = h(1)$; see \cite[Thm.~5.3]{MarSpr_pts_PAMS}.
These isogeny classes are built from Kedlaya's infinite sets of simple isogeny classes of abelian varieties over~$\F_2$ with prescribed numbers of points; see \cite[Thm.~1.1]{Ked21}. 
We must use a different technique to find rich isogeny classes in general because there are at most finitely many simple abelian varieties over~$\F_q$ with a prescribed number of points~$N$ when~$q > 2$ by \cite{Kadets21}.

We present a criterion for richness in Main Theorem~\ref{main-thm:rich} which is easy to compute using only the characteristic polynomial. An expanded statement is proved as Theorem~\ref{thm:rich_condition}, and we compare the conditions of cyclicity and richness for abelian varieties of small dimension over small finite fields in Example~\ref{ex:rich_vs_cyclic}.
\begin{mainthm} 
  \label{main-thm:rich}
  Consider a squarefree isogeny class~$\cI_h$ of abelian varieties over~$\F_q$ of dimension~$g$.
  Let~$K = \Q[x]/(h)$ be the endomorphism algebra, and let~$\pi$ be the class of~$x$.
  Write~$N = h(1) = \prod_{j = 1}^s \ell_{j}^{e_j}$ for the number of rational points on each abelian variety in~$\cI_h$.
  The following are equivalent.
  \begin{enumerate}[(a)]
    \item~$\cI_h$ is rich, that is, every abelian group of order~$N$ arises as~$A(\F_q)$ for some~$A\in \cI_h$.
    \item For all~$1\leq i\leq 2g$, we have
   ~$$
      \frac{h^{(i)}(1)}{i!}\cdot \ell_1^{i-e_1}\cdots \ell_s^{i-e_s} \in \Z.
   ~$$
  \end{enumerate} 
\end{mainthm}

To obtain this theorem, we first prove Lemma~\ref{lem:GiangrecoMaidana}, generalizing a lemma of Giangreco-Maidana \cite[Lem.~2.1]{Giangreco-Maidana19} which was originally used to study cyclicity. 
We then deduce Main Theorem~\ref{main-thm:rich} by applying a result of Rybakov \cite[Thm.~1.1]{Rybakov10}.
As a consequence, we also prove that a squarefree isogeny class is rich if and only if its simple factors are rich; see Corollary~\ref{cor:rich_sqfree_simple}.

We conclude Section~\ref{sec:non-cyclic} by proving the existence of some abelian varieties whose groups of rational points have at least two generators. 
In particular, we show in Corollary~\ref{cor:divisible_by_4} that a squarefree isogeny class~$\cI_h$ over~$\F_q$ is non-cylic if~$q$ is odd and~$h(1)$ is divisible by~$4$.
Finally, we prove the existence of ordinary abelian varieties over~$\F_4$ with certain prescribed non-cyclic groups of rational points in Theorem~\ref{thm:MS_improved}, thereby improving \cite[Thm.~3.3]{MarSpr_pts_PAMS}.

\subsection{Duality}\
In Section~\ref{sec:dual}, we turn our attention to the dual~$A^\vee$ of an abelian variety~$A$.
At the AMS MRC, \emph{Explicit Methods in Arithmetic Geometry in Characteristic~$p$}, in June 2019, Bjorn Poonen suggested the problem of finding an abelian variety~$A$ defined over a finite field~$\F_q$ such that~$A(\F_q) \not\cong A^\vee(\F_q)$. In Example~\ref{ex:dual_non_isom_gps}, we find such a variety by using Main Theorem~\ref{main-thm:main-tools}.\ref{main-thm:main-tools:Gor}.
In this example, we observe that~$\End(A)$ is a Gorenstein order and~$\End(A) \neq \End(A^\vee)$.
In Example~\ref{ex:stats_dual_non_isom_gps}, we show that these examples are not rare.

Section~\ref{sec:dual} concludes with a further investigation of the relationships between these properties, along with the properties of being a Jacobian, principally polarizable, or self-dual.
More precisely, when~$A$ is squarefree, consider the following well-known implications, which are recalled below in Theorem~\ref{thm:jac_ppav_implications}.
$$
\xymatrix@R=1em{   
    &  & & A(\F_q)\cong A^\vee(\F_q) \\
    A\cong\Jac(C) \ar@2{->}[r]  & A \text{ has a princ. pol.} \ar@2{->}[r] & A \cong A^\vee\ar@2{->}[ur]\ar@2{->}[dr]& \\\
    &   & & \End(A) = \End(A^\vee) 
}
$$
Examples~\ref{ex:pp_not_jac}, \ref{ex:sd_not_pp} and \ref{ex:same_end_not_sd}, which are likely unsurprising to experts,
illustrate that none of the reverse implications are true.
Additionally, Example~\ref{ex:same_gp_not_same_end} exhibits an abelian variety~$A$ over~$\F_3$ such that~$A(\F_3)\cong A^\vee(\F_3)$ but~$\End(A) \neq \End(A^\vee)$, hence there is no downward implication on the right side of the diagram.
In each case, there are many suitable examples.
On the other hand, it is unknown whether there are examples where~$A(\F_q) \not\cong A^\vee(\F_q)$ and~$\End(A) = \End(A^\vee)$.
Observe that, under the hypotheses of either part of Main Theorem~\ref{main-thm:main-tools},~$\End(A) = \End(A^\vee)$ implies that~$A(\F_q) \cong A^\vee(\F_q)$; see also Proposition~\ref{prop:A_same_grp_Avee}.

Main Theorem~\ref{main-thm:not_self_dual} provides a sufficient condition for~$A\not\cong A^\vee$ which only depends on the properties of the orders in the endomorphism algebra. It is a key ingredient for producing Example~\ref{ex:same_end_not_sd}, and may be of independent interest. It is proved in the text as Proposition \ref{prop:not_self_dual}.
\begin{mainthm}
  \label{main-thm:not_self_dual}
  Let~$A$ be an abelian variety in a squarefree isogeny class~$\cI_h$ over~$\F_q$,
  let~$S$ be an order in~$K = \Q[x]/(h)$ such that~$S=\bar{S}$, and let~$\p$ be a prime of~$S$ satisfying~$\p=\bar{\p}$ and~$\type_\p(S)=2$.
  Assume that~$A$ is ordinary \emph{(}{\bf Ord}\emph{)} or that~$q$ is prime \emph{(}{\bf CS}\emph{)}.
  If~$S\subseteq \End(A)$ and~$S_\p = \End(A)_\p$, then~$A\not\cong A^\vee$.
  In particular, such an~$A$ is not principally polarizable and cannot be a Jacobian.
\end{mainthm}

\subsection{Related literature}
We conclude the introduction by mentioning some additional related results.
There are several papers on the classification of the groups of rational points of elliptic curves; see for example \cite{Tsfasman85}, \cite{Ruck87}, \cite{TVN07} and \cite{Vol88}.
The cases of abelian surfaces and threefolds were studied in \cite{Xing94}, \cite{Xing96}, \cite{Rybakov12}, \cite{Rybakov15}, \cite{Davidetal14}, and \cite{Kotelnikova19}.
Additional results about cyclic isogeny classes can be found in \cite{Giangreco-Maidana20}, \cite{Giangreco-Maidana21} and \cite{BerGM22}.

\subsection*{Acknowledgements}
We thank Jonas Bergstr\"om, Valentijn Karemaker, and Bjorn Poonen for providing comments on a draft version of this paper.

\newpage
\section{Fractional ideals in orders}
\label{sec:frac_ideals}

In this section we recall definitions and properties of orders and their fractional ideals.
These concepts are well-known in the context of number fields, but
we will work in a more general setting.
Additional details and proofs can be found in \cite[Sec.~2]{MarsegliaCMType_arxiv}.

Let~$Z$ be a Dedekind domain with field of fractions~$Q$.
In practice, for the purpose of this paper, it will be enough to consider~$Z=\Z$ and~$Z=\Z_p$.
Let~$K$ be a finite \emph{\'etale algebra} over~$Q$, that is, a finite product of finite separable extensions of~$Q$.
A~$Z$-\emph{lattice}~$L$ in~$K$ is a finitely generated free sub-$Z$-module of~$K$ such that~$L\otimes_Z Q = K$.
Given two lattices~$L_1$ and~$L_2$ in~$K$, we define the \emph{colon} as
\[ (L_1 : L_2) = \set{ x \in K : xL_2 \subseteq L_1}. \]

A~$Z$-\emph{order}~$S$ in~$K$ is a subring of~$K$ which is also a~$Z$-lattice.
Observe that~$K$ is the total ring of quotients of any~$Z$-order~$S$ in~$K$.
When no confusion can arise, we will drop the base ring~$Z$ from the terminology and simply write lattice and order.
When~$S\subseteq S'$ are orders in~$K$, the colon~$(S:S')$ is called the \emph{conductor} of~$S$ in~$S'$. 

A \emph{fractional~$S$-ideal}~$I$ is a finitely generated~$S$-submodule of~$K$ which is also a lattice.
Given a lattice~$L$ in~$K$ we define its \emph{multiplicator ring} as~$(L:L)$.
Observe that~$(L:L)$ is a order in~$K$ and hence~$L$ is a fractional~$(L:L)$-ideal.

Given two lattices~$I$ and~$J$ in~$K$ (resp.~fractional~$S$-ideals) then
the sum~$I+ J$, the intersection~$I\cap J$, the product~$IJ$, and the colon~$(I:J)$ are lattices in~$K$ (resp.~fractional~$S$-ideals).

Let~$S$ be an order in~$K$.
A \emph{prime} of~$S$ is a maximal ideal of~$S$.
We denote by~$S_\p$ the localization of~$S$ at~$\p$, and by~$\hat{S}_\p$ the completion of~$S$ at~$\p$.
For an~$S$-module~$M$ we put~$M_\p=M\otimes_S S_\p$ and~$\hat M_\p=M\otimes_S \hat S_\p$.
Observe that~$\hat{S}_\p$ is a~$\hat{Z}_p$-order, where~$p$ is the contraction of~$\p$ in~$Z$. 
Also, if~$I$ is a fractional~$S$-ideal then~$\hat{I}_\p$ is a fractional~$\hat{S}_\p$-ideal.
We will say that~$I$ is \emph{principal at~$\p$} if~$I_\p$ is a principal~$S_\p$-module, or, equivalently,~$\hat I_\p$ is a principal fractional~$\hat S_\p$-ideal.
A fractional~$S$-ideal~$I$ is called \emph{invertible} if~$I(S:I)=S$ or, equivalently,
if it is principal at~$\p$ for every prime~$\p$ of~$S$. See for example \cite[Lem.~2.12 and~2.17]{MarsegliaCMType_arxiv}.
Given orders~$S\subseteq S'$ in~$K$, we can consider~$S'$ as a fractional~$S$-ideal.
Lemma~\ref{lemma:rel_cond_same_order} below tells us when~$S'$ is principal at a prime~$\p$ of~$S$.
\begin{lemma}
\label{lemma:rel_cond_same_order}
    Let~$S\subseteq S'$ be orders.
    Given a prime~$\p$ of~$S$, the following statements are equivalent:
    \begin{enumerate}[(a)]
    \item~$(S:S') \subseteq \p$.
    \item~${S'}_\p$ is not a principal~$S_\p$-module.
    \item~$S_\p \neq {S'}_\p$.
    \end{enumerate}
\end{lemma}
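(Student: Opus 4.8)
The plan is to prove the two equivalences (a) $\Leftrightarrow$ (c) and (b) $\Leftrightarrow$ (c) directly, working locally at $\p$: all three statements are genuinely local, and the arguments are short enough that no appeal to completions or to the behaviour of conductors under completion is needed. Throughout I will use that $S'$, being an order in $K$, is a finitely generated $S$-module, and that $K$ is the total ring of fractions of $S$, so that any nonzero element of $K$ may be cancelled.

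First I would handle (a) $\Leftrightarrow$ (c) via its contrapositive in both directions. If $(S:S') \not\subseteq \p$, choose $f \in (S:S')$ with $f \notin \p$; then $f$ is a unit in $S_\p$ and $f S' \subseteq S$, so $S'_\p \subseteq f^{-1} S_\p = S_\p$, and combined with the trivial inclusion $S_\p \subseteq S'_\p$ this yields $S_\p = S'_\p$. Conversely, if $S_\p = S'_\p$, write $S' = S\alpha_1 + \cdots + S\alpha_m$; since each $\alpha_i$ lies in $S'_\p = S_\p$, we may write $\alpha_i = s_i / t_i$ with $s_i \in S$ and $t_i \in S\setminus\p$, and then $t := t_1\cdots t_m \in S\setminus \p$ satisfies $tS' \subseteq S$, i.e.\ $t \in (S:S')\setminus \p$, so $(S:S') \not\subseteq \p$.

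Next, for (b) $\Leftrightarrow$ (c): if $S_\p = S'_\p$ then $S'_\p$ is generated by $1$ over $S_\p$, hence principal, so (b) fails. Conversely, suppose $S'_\p = \alpha S_\p$ is principal as an $S_\p$-module; then $\alpha \neq 0$ and $\alpha^2 \in S'_\p = \alpha S_\p$, so $\alpha^2 = \alpha u$ for some $u \in S_\p$, and cancelling $\alpha$ in $K$ gives $\alpha = u \in S_\p$; hence $S'_\p = \alpha S_\p \subseteq S_\p$, and so $S_\p = S'_\p$.

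The proof involves no serious obstacle; the only points requiring a little care are the use of a single common denominator $t\notin\p$ in the second half of the (a) $\Leftrightarrow$ (c) step -- which is legitimate precisely because $S'$ is a finitely generated $S$-module -- and the licence to cancel $\alpha$ in the (b) $\Leftrightarrow$ (c) step, which is valid since $\alpha$ is a nonzerodivisor in $K$.
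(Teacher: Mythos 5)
Your proof is correct and follows essentially the same route as the paper: your (b)$\Leftrightarrow$(c) step is the paper's argument (a generator of ${S'}_\p$ is forced into $S_\p$ because it is invertible), and your hands-on clearing of denominators for (a)$\Leftrightarrow$(c) simply unpacks the paper's observation that the conductor localizes, namely that $(S:S')\subseteq\p$ if and only if $(S_\p:{S'}_\p)\subsetneq S_\p$. The one point you should make explicit is why the generator $\alpha$ is a nonzerodivisor: since $1\in {S'}_\p=\alpha S_\p$, the element $\alpha$ is a unit of the total quotient ring (this is exactly the paper's remark that $\alpha\in ({S'}_\p)^\times$), which is what licenses your cancellation.
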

\begin{proof}
    We first prove that~${S'}_\p$ is a principal~$S_\p$-module if and only if~$S_\p={S'}_\p$.
    One implication is trivial. For the other, assume that~${S'}_\p=\alpha S_\p$.
    Hence~$\alpha \in {S'}_\p^\times$, which shows that
    \[ S_\p = \frac 1\alpha {S'}_\p = {S'}_\p.\]
    To conclude, observe that~$S_\p \subsetneq {S'}_\p$ if and only if~$(S_\p:{S'}_\p) \subsetneq S_\p$, which occurs if and only if~$(S:S') \subseteq \p$.
\end{proof}

Later in the paper we will study groups of rational points of abelian varieties over finite fields by describing them in terms of quotient of fractional ideals.
In turn, we describe such quotients locally.
\begin{lemma}\label{lem:dir_sum_loc}
  Let~$J\subseteq I$ be fractional ideals over an order~$S$.
  Then we have an isomorphism of~$S$-modules
  \[ \frac{I}{J} \simeq \bigoplus_\p \left( \frac{I}{J} \right)_\p, \]
  where the direct sum is over the finitely many primes for which~$I_\p\neq J_\p$.
\end{lemma}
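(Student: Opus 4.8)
The plan is to regard $M := I/J$ as a finitely generated torsion module over the one-dimensional Noetherian ring $S$, to show that its support in $\operatorname{Spec}(S)$ is a finite set of maximal ideals, and then to use the fact that a module over an Artinian ring is canonically the direct sum of its localizations at the maximal ideals.

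First I would note that $M$ is finitely generated over $S$, being a quotient of the fractional ideal $I$, and that $M$ is torsion: from $I\otimes_Z Q = K = J\otimes_Z Q$ we get $M\otimes_Z Q = 0$, so some nonzero $a\in Z$ annihilates $M$. Since $S$ is module-finite over $Z$, the ring $\bar S := S/\operatorname{Ann}_S(M)$ is a quotient of $S/aS$, which is module-finite over the Artinian ring $Z/aZ$; hence $\bar S$ is Artinian. Writing $\mathfrak m_1,\dots,\mathfrak m_r$ for its maximal ideals, the canonical decomposition $\bar S\cong\prod_{i=1}^r \bar S_{\mathfrak m_i}$ induces, for the $\bar S$-module $M$, an isomorphism $M\cong\bigoplus_{i=1}^r M_{\mathfrak m_i}$. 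If $\p_i$ denotes the preimage of $\mathfrak m_i$ in $S$, then localization commutes with the surjection $S\to\bar S$, so the $\bar S$-localization $M_{\mathfrak m_i}$ is identified with the $S$-localization $(I/J)_{\p_i}$, while $(I/J)_\p = 0$ for every other prime $\p$ of $S$, because then $\operatorname{Ann}_S(M)\not\subseteq\p$. Finally, exactness of localization gives $(I/J)_\p\cong I_\p/J_\p$, so $(I/J)_\p\neq 0$ exactly when $I_\p\neq J_\p$; these are precisely $\p_1,\dots,\p_r$, a finite set, and the claimed isomorphism of $S$-modules follows.

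This argument is routine, so there is no genuine obstacle; the only points needing a little care are that $S$ is not a domain, so that ``torsion'' has to mean ``annihilated by a non-zero-divisor'' (which is why I pass to $a\in Z$), and that $\bar S$ is Artinian, which relies on $Z$ being Dedekind so that $Z/aZ$ has dimension $0$. As an alternative to invoking the structure of Artinian rings, one can instead list the finitely many primes $\p_1,\dots,\p_r$ of $S$ containing $\operatorname{Ann}_S(M)$ — all maximal, lying over the prime divisors of $a$ in $Z$ — observe that $\prod_i\p_i^N\subseteq\operatorname{Ann}_S(M)$ for $N\gg 0$ with the $\p_i^N$ pairwise comaximal, and apply the Chinese Remainder Theorem to $S/\operatorname{Ann}_S(M)$ to obtain the same decomposition of $M$.
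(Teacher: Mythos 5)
Your proof is correct and follows essentially the same route as the paper: both reduce to the observation that $I/J$ is a finitely generated torsion module, hence of finite length, and then decompose it along the finitely many maximal ideals in its support. The only difference is that the paper cites \cite[Thm.~2.13.(b)]{eis95} for this decomposition, whereas you reprove it directly via the Artinian quotient $S/\operatorname{Ann}_S(I/J)$ (or the Chinese Remainder Theorem), which is a fine, self-contained substitute.
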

\begin{proof}
  The quotient~$I/J$ is a finitely generated torsion~$Z$-module; see for example~\cite[Sec.~1.7]{cohenadv00}.
  Hence,~$I/J$ is an Artinian and Noetherian~$S$-module, so the result follows from \cite[Thm.~2.13.(b)]{eis95}.
\end{proof}

\begin{prop}\label{prop:converse_O_K}
  Let~$S\subsetneq S'$ be orders.
  If~$r\in S$ is not a zero-divisor
  and~$rS$ is not coprime to the conductor~$\frf = (S : S')$, 
  then the quotient~$M = S'/rS'$ is not a cyclic~$S$-module.
\end{prop}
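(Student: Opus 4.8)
The plan is to argue by contradiction and to localize at a prime witnessing the failure of coprimality, where Nakayama's lemma will finish the job. First, the hypothesis that $rS$ is not coprime to $\frf = (S:S')$ means $rS + \frf \subsetneq S$, so there is a prime $\p$ of $S$ with $rS \subseteq \p$ and $\frf \subseteq \p$. Since $\frf = (S:S') \subseteq \p$, Lemma~\ref{lemma:rel_cond_same_order} tells us that $S'_\p$ is \emph{not} a principal $S_\p$-module; this is the statement I aim to contradict.

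Next, suppose toward a contradiction that $M = S'/rS'$ is a cyclic $S$-module. Note $rS' \subseteq S'$ and, since $r$ is not a zero-divisor, $rS'$ is a lattice of full rank, so $M$ is a torsion module and everything in sight is a fractional $S$-ideal. Localizing at $\p$ and using exactness of localization, $M_\p \cong S'_\p / rS'_\p$, and this is cyclic over $S_\p$ because $M$ is cyclic over $S$. Hence there is $\alpha \in S'_\p$ with $S'_\p = S_\p\alpha + rS'_\p$.

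Now I apply Nakayama. Since $Z$ is a Dedekind domain (hence Noetherian) and $S$ is a finitely generated $Z$-module, $S$ is Noetherian, and $S'$ is a finitely generated $S$-module (being a fractional $S$-ideal); therefore $S'_\p$ is a finitely generated module over the Noetherian local ring $S_\p$, whose maximal ideal is $\p S_\p$. Because $r \in \p$, we have $rS'_\p \subseteq \p S'_\p$, so $S'_\p/S_\p\alpha = \p\,(S'_\p/S_\p\alpha)$ is a finitely generated $S_\p$-module equal to its own product with the maximal ideal. Nakayama's lemma forces $S'_\p = S_\p\alpha$, i.e. $S'_\p$ is a principal $S_\p$-module, contradicting Lemma~\ref{lemma:rel_cond_same_order}. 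Hence $M$ is not cyclic.

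I do not expect a serious obstacle: the content is essentially Lemma~\ref{lemma:rel_cond_same_order} together with Nakayama. The only points requiring care are the reduction to $S_\p$ — that $M_\p$ remains cyclic and that $S'_\p$ is a finitely generated $S_\p$-module — and recording that $r \in \p$, which is exactly what guarantees the Nakayama hypothesis $rS'_\p \subseteq \p S'_\p$.
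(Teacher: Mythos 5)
Your argument is correct and follows essentially the same route as the paper: both pick a prime $\p\supseteq rS+\frf$, invoke Lemma~\ref{lemma:rel_cond_same_order} to see that $S'_\p$ is not a principal $S_\p$-module, and then deduce that $M_\p$, hence $M$, cannot be cyclic. Your explicit Nakayama step is just the paper's observation that non-principality of $S'_\p$ means $\dim_{S/\p}(S'/\p S')>1$ phrased contrapositively (and it lets you bypass Lemma~\ref{lem:dir_sum_loc}), so the difference is cosmetic.
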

\begin{proof}
    By assumption there exists a maximal ideal~$\p$ of~$S$ such that
    \[ rS + \frf \subseteq \p, \]
    which implies
    \[ rS' + \frf \subseteq \p S'. \]
    Since~$\p$ is above the conductor~$\frf$, by Lemma \ref{lemma:rel_cond_same_order} we have that~${S'}_\p$ is not a principal~$S_\p$-module, or equivalently the~$S/\p$-vector space~$S'/\p S'$ has dimension strictly bigger than~$1$.
    Observe that
    \[ \frac{M}{\p M} \cong \dfrac{S'/rS'}{\p S'/rS'} \cong \frac{S'}{\p S'}. \]
    Hence the~$S_\p$-module~$M_\p$ is not cyclic.
    By Lemma~\ref{lem:dir_sum_loc}, we conclude that~$M$ is not a cyclic~$S$-module.
\end{proof}

The isomorphism class of a quotient of fractional ideals can be deduced from local information at finitely many primes, as explained in the following lemma.
\begin{lemma}\label{lemma:loc_glob_fin_mod}
  Let~$I$ and~$J$ be fractional ideals over an order~$S$, and let~$r\in S$ be a non-zero divisor.
  Let~$\mathcal{S}$ be the set of primes of~$S$ containing~$r$.
  Assume that we have an~$S_\p$-linear isomorphism~$\vphi_\p:I_\p \overset{\sim}{\to} J_\p$ for every~$\p\in\mathcal{S}$.
  Then there is an~$S$-linear isomorphism
  \[ \psi:\frac{I}{rI}\overset{\sim}{\longrightarrow}\frac{J}{rJ}, \]
  such that~$\psi \otimes S_\p = \vphi_\p\otimes (S/r S)$ for every prime~$\p\in \mathcal{S}$.
\end{lemma}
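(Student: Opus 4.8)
The plan is to reduce the global statement to the local isomorphisms we are handed, using Lemma~\ref{lem:dir_sum_loc} to decompose both sides as direct sums over the finitely many primes where localization matters. First I would observe that $I/rI$ and $J/rJ$ are finite $Z$-torsion modules, so by Lemma~\ref{lem:dir_sum_loc} we have $I/rI \cong \bigoplus_{\p} (I/rI)_{\p}$ and $J/rJ \cong \bigoplus_{\p} (J/rJ)_{\p}$, where in each case the sum runs over the primes $\p$ for which the localization of numerator and denominator differ. Since $r$ is a unit in $S_{\p}$ whenever $r \notin \p$, the quotient $(I/rI)_{\p} = I_{\p}/rI_{\p}$ vanishes unless $\p \supseteq rS$, i.e.\ unless $\p \in \mathcal{S}$; the same holds for $J/rJ$. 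Hence both direct sums may be taken over the single finite set $\mathcal{S}$.

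Next, for each $\p \in \mathcal{S}$ the given $S_{\p}$-linear isomorphism $\vphi_{\p} : I_{\p} \xrightarrow{\sim} J_{\p}$ induces an $S_{\p}$-linear (equivalently, $S/rS$-linear after the natural identification $(S/rS)_{\p} \cong S_{\p}/rS_{\p}$) isomorphism
\[
  \bar{\vphi}_{\p} : \frac{I_{\p}}{rI_{\p}} \xrightarrow{\sim} \frac{J_{\p}}{rJ_{\p}},
\]
obtained by reducing $\vphi_{\p}$ modulo $r$; this is well defined because $\vphi_{\p}(rI_{\p}) = r\vphi_{\p}(I_{\p}) = rJ_{\p}$. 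I would then define $\psi$ to be the direct sum $\bigoplus_{\p \in \mathcal{S}} \bar{\vphi}_{\p}$, transported through the two decomposition isomorphisms above. By construction $\psi$ is an $S$-linear isomorphism $I/rI \xrightarrow{\sim} J/rJ$, and localizing at a prime $\p \in \mathcal{S}$ kills all summands except the one indexed by $\p$, leaving exactly $\bar{\vphi}_{\p}$, which is $\vphi_{\p} \otimes (S/rS)$ in the stated sense. For $\p \notin \mathcal{S}$ both localizations vanish and the compatibility is vacuous.

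The only point requiring care — and the main (mild) obstacle — is checking that the decomposition isomorphism of Lemma~\ref{lem:dir_sum_loc} is compatible with localization in the precise way needed to verify $\psi \otimes S_{\p} = \vphi_{\p} \otimes (S/rS)$: one must confirm that localizing the canonical map $M \to \bigoplus_{\p} M_{\p}$ at a fixed prime $\p_0 \in \mathcal{S}$ recovers the projection onto the $\p_0$-component as an isomorphism $M_{\p_0} \xrightarrow{\sim} (M_{\p_0})$, i.e.\ that the other summands $M_{\p}$ with $\p \neq \p_0$ localize to zero at $\p_0$. This is immediate since each $M_{\p}$ is supported only at $\p$ (being annihilated by a power of $\p$, as $M$ is a finite module and $M_{\p}$ is its $\p$-primary part), so $(M_{\p})_{\p_0} = 0$ for $\p \neq \p_0$. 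With this observation the naturality is routine, and no genuine calculation is needed beyond bookkeeping.
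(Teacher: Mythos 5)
Your proof is correct and follows essentially the same route as the paper: both decompose $I/rI$ and $J/rJ$ via Lemma~\ref{lem:dir_sum_loc}, observe that the localizations vanish at primes not containing $r$, and define $\psi$ as the direct sum of the reductions $\vphi_\p \otimes (S/rS)$. Your extra check that the summands are $\p$-primary (so localizing recovers exactly $\vphi_\p \otimes (S/rS)$) is sound bookkeeping that the paper leaves implicit.
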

\begin{proof}
  Set~$M=I/rI$ and~$N=J/rJ$.
  By assumption, we have isomorphisms
  \[ \vphi_\p \otimes \left(\frac{S}{r S}\right) : M_\p \overset{\sim}{\longrightarrow} N_\p, \]
  for each~$\p\in\mathcal{S}$.
  We now claim that, given a prime~$\p$ of~$S$, we have~$M_\p\neq 0$ if and only if~$r \in \p$.
  If~$r\not\in\p$ then~$(S/r S)_\p=0$ and hence~$M_\p = (I\otimes_S (S/r S))=0$ as well.
  Conversely, if~$r \in \p$ then~$r I\subseteq \p I$ and hence we obtain a surjective map
 ~$M_\p \to I/\p I$,
  which shows that~$M_\p\neq 0$, completing the proof of the claim.
  The same is true for the~$S$-module~$N$.

  By Lemma~\ref{lem:dir_sum_loc}, we have
  \[ M \cong \bigoplus_{\p \in \mathcal{S}} M_\p \text{ and } N \cong  \bigoplus_{\p \in \mathcal{S}} N_\p. \]
  We conclude by setting
  \[ \psi = \bigoplus_{\p \in \mathcal{S}} \vphi_\p \otimes \left(\frac{S}{r S}\right). \]
\end{proof}

The \'etale algebra~$K$ comes equipped with a \emph{trace} map
\[ \Tr_{K/Q}: K \to Q \]
that associates to every element~$x\in K$ the trace of the matrix representing the multiplication-by-$x$ map with respect to any~$Q$-basis of~$K$.
The existence of such a non-degenerate trace implies that the integral closure~$\cO_K$ of~$Z$ in~$K$ is an order, called the \emph{maximal order}, since every other order is contained in~$\cO_K$.
Recall that~$\cO_K$ is characterized by the fact that every localization is a principal ideal ring.

The following proposition refines Lemma~\ref{lemma:loc_glob_fin_mod}, in the sense that we only need local information above the conductor to understand the isomorphism class.
\begin{prop}\label{prop:coprime_cond}
  Let~$S$ be an order in~$K$ and let~$r$ be a non-zero divisor of~$K$.
  Assume that~$r S$ is coprime to the conductor~$\frf=(S:\cO_K)$.
  Then for every fractional~$S$-ideal~$I$ we have an~$S$-linear isomorphism
  \[ \frac{I}{rI} \cong \frac{S}{r S}.\]
\end{prop}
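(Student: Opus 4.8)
The plan is to reduce the global statement to a local one at each prime $\p$ of $S$ using Lemma~\ref{lemma:loc_glob_fin_mod}, and then exploit the coprimality hypothesis to show that the local picture is trivial. More precisely, let $\mathcal{S}$ be the set of primes of $S$ containing $r$. By Lemma~\ref{lemma:loc_glob_fin_mod} (applied with $J = S$), it suffices to produce an $S_\p$-linear isomorphism $I_\p \cong S_\p$ for every $\p \in \mathcal{S}$; the lemma then glues these into the desired isomorphism $I/rI \cong S/rS$. So the whole problem becomes: for each $\p$ containing $r$, the localized fractional ideal $I_\p$ is free of rank one over $S_\p$.

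The key point is that the coprimality of $rS$ with the conductor $\frf = (S : \cO_K)$ forces $\frf \not\subseteq \p$ whenever $r \in \p$: if both $r$ and $\frf$ were contained in $\p$, then $rS + \frf \subseteq \p \subsetneq S$, contradicting $rS + \frf = S$. Hence, for every $\p \in \mathcal{S}$, we have $\frf \not\subseteq \p$, and by Lemma~\ref{lemma:rel_cond_same_order} (applied to the orders $S \subseteq \cO_K$) this gives $S_\p = (\cO_K)_\p$. In other words, $S$ is already maximal at every prime over $r$. Since $(\cO_K)_\p$ is a localization of the maximal order, it is a principal ideal ring, so every fractional ideal over it — in particular $I_\p = I \otimes_S S_\p = I \otimes_S (\cO_K)_\p$, which is a fractional $(\cO_K)_\p$-ideal — is principal, i.e. free of rank one. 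This supplies the required local isomorphisms $\vphi_\p : I_\p \overset{\sim}{\to} S_\p$.

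Assembling the pieces: choose generators $I_\p = \alpha_\p S_\p$ for each $\p \in \mathcal{S}$, which give $S_\p$-linear isomorphisms $\vphi_\p : I_\p \overset{\sim}{\to} S_\p$ sending $\alpha_\p \mapsto 1$. Apply Lemma~\ref{lemma:loc_glob_fin_mod} with $I$ as given and $J = S$ to obtain an $S$-linear isomorphism $I/rI \overset{\sim}{\to} S/rS$. This is precisely the claim.

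I do not expect any serious obstacle here: the argument is essentially a bookkeeping exercise combining the two prior lemmas. The one subtlety worth stating carefully is the implication ``$rS$ coprime to $\frf$'' $\Rightarrow$ ``$S$ is maximal at every prime over $r$,'' which is where the hypothesis is genuinely used; everything else is formal. One should also note that $r$ being a non-zero divisor of $K$ means $r \in K^\times$ (since $K$ is étale over $Q$, hence a product of fields), so $r S$ and $r I$ are again lattices and the quotients $I/rI$, $S/rS$ make sense as finite modules — this is needed to invoke Lemma~\ref{lem:dir_sum_loc} inside the proof of Lemma~\ref{lemma:loc_glob_fin_mod}.
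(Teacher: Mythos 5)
Your proof is correct and follows essentially the same route as the paper: coprimality of $rS$ with the conductor gives $S_\p=\cO_{K,\p}$ at every prime $\p$ containing $r$ via Lemma~\ref{lemma:rel_cond_same_order}, hence $I_\p\cong S_\p$, and Lemma~\ref{lemma:loc_glob_fin_mod} glues these local isomorphisms into the $S$-linear isomorphism $I/rI\cong S/rS$. Your write-up merely spells out the intermediate steps (why $\frf\not\subseteq\p$, why $I_\p$ is principal over the localized maximal order) that the paper leaves implicit.
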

\begin{proof}
  Because~$rS$ is coprime to the conductor, 
  by Lemma \ref{lemma:rel_cond_same_order} we have~$S_\p=\cO_{K,\p}$ for any prime~$\p$ containing~$r$.
  This implies that~$I_\p\cong S_\p$ for every such prime~$\p$.
  We conclude by Lemma~\ref{lemma:loc_glob_fin_mod}.
\end{proof}

Given a lattice~$L$ in~$K$ we define its \emph{trace dual} as
\[ L^t=\set{ x\in K : \Tr_{K/Q}(x L) \subseteq Z }. \]
In the following lemma, we record some well known properties of trace duals.
\begin{lemma}\label{lemma:trace}
  Let~$L$,$L_1$ and~$L_2$ be lattices and let~$S$ be an order in~$K$.
  Then
  \begin{enumerate}[(a)]
    \item \label{lemma:trace:doubledual}~$(L^t)^t=L$.
    \item~$(L_1:L_2) = (L_1^tL_2)^t~$.
    \item~$(L_1:L_2) = (L_2^t : L_1^t)$.
    \item \label{lemma:trace:mult_ring}~$(L:L) = S$ if and only if~$LL^t = S^t$.
  \end{enumerate}
\end{lemma}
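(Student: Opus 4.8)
The plan is to treat part~\ref{lemma:trace:doubledual}, the reflexivity of lattices under the operation $L\mapsto L^t$, as the crux, and then to deduce parts~(b), (c) and~(d) from it by purely formal manipulations of colons and trace duals.

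First I would prove~(a). Since $K$ is \'etale over $Q$, the bilinear form $(x,y)\mapsto \Tr_{K/Q}(xy)$ is non-degenerate, so $\Phi\colon K\to \Hom_Q(K,Q)$, $x\mapsto \Tr_{K/Q}(x\cdot-)$, is a $Q$-linear isomorphism. Unwinding the definition, for any lattice $L$ this $\Phi$ restricts to a $Z$-linear isomorphism $L^t\xrightarrow{\sim}\Hom_Z(L,Z)$, compatibly with localization at each prime of $Z$. The inclusion $L\subseteq (L^t)^t$ is immediate from the definition. For the reverse inclusion I would argue locally: a lattice is the intersection of its localizations, so it suffices to check $((L_p)^t)^t=L_p$ for each maximal ideal $p$ of $Z$, where $Z_p$ is a discrete valuation ring and $L_p$ is free; here one picks the basis of $L_p$ dual to a given basis under the trace form and notes that the bidual of a basis is the original basis. (Alternatively, one invokes directly that finitely generated torsion-free, hence projective, modules over the Dedekind domain $Z$ are reflexive, tracing through the identification $L^t\cong\Hom_Z(L,Z)$.) I expect the mild bookkeeping that $L\mapsto L^t$ commutes with localization to be the only subtle point of the whole lemma; everything after this is formal.

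Next, (b): for $x\in K$, the condition $x\in(L_1^tL_2)^t$ says $\Tr_{K/Q}(x\,L_1^tL_2)\subseteq Z$, which by bilinearity of the trace is equivalent to $\Tr_{K/Q}\big((xb)\,L_1^t\big)\subseteq Z$ for every $b\in L_2$, i.e.\ $xb\in(L_1^t)^t$ for every $b\in L_2$; by~(a) this is exactly $xL_2\subseteq L_1$, that is, $x\in(L_1:L_2)$. For~(c) I would combine~(b), applied to the pair $L_2^t,L_1^t$, with~(a) and the commutativity of multiplication in $K$:
\[ (L_2^t:L_1^t)=\big((L_2^t)^tL_1^t\big)^t=(L_2L_1^t)^t=(L_1^tL_2)^t=(L_1:L_2). \]

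Finally~(d) is the special case $L_1=L_2=L$ of~(b): it gives $(L:L)=(L^tL)^t$, and applying the trace-dual operation to both sides and using~(a) shows that $(L:L)=S$ if and only if $L^tL=S^t$; since $K$ is commutative, $L^tL=LL^t$, which is the stated equivalence. No step beyond~(a) requires anything more than these identities, so once~(a) is secured the remainder is routine.
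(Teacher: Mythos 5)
Your proof is correct. Note that the paper does not actually argue this lemma: it simply cites \cite[Sec.~15.6]{JV} ("see for example"), so there is nothing internal to compare against; your write-up supplies the standard argument that such references give. The structure is the right one: part (a) is indeed the only substantive point, and your treatment of it is sound — the trace form is non-degenerate because $K$ is \'etale, $L\subseteq (L^t)^t$ is formal, and the reverse inclusion follows by localizing at the maximal ideals of $Z$ (where $Z_p$ is a DVR and $L_p$ is free, so a trace-dual basis exists and biduality is a basis computation), using that $L=\bigcap_p L_p$ and that $(-)^t$ commutes with localization, which you correctly flag as the one piece of bookkeeping (it uses only that $L$ is finitely generated). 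The deductions of (b), (c), (d) from (a) by bilinearity of the trace, the substitution $L_1\mapsto L_2^t$, $L_2\mapsto L_1^t$, and the involutivity of $(-)^t$ are all valid, with the implicit facts (duals and products of lattices are lattices) already recorded in Section~\ref{sec:frac_ideals} of the paper.
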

\begin{proof}
  See for example \cite[Sec.~15.6]{JV}.
\end{proof}

Let~$S$ be an order,~$\p$ be a prime of~$S$ and~$p$ its contraction in~$Z$.
Denote by~$\hat{Q}_p$ the fraction field of~$\hat{Z}_p$ and by
$\hat{K}_\p$ the total ring of quotient of~$\hat{S}_\p$. 
The trace~$\Tr_{K/Q}$ naturally induces a trace~$\Tr_{\hat{K}_\p/\hat{Q}_p}$.
It follows that taking trace duals commutes with completion.
So, given a fractional~$S$-ideal~$I$, the notation~$\hat{I}^t_\p$ is not ambiguous.

\begin{lemma}\label{lemma:loc_dual}
  Let~$S$ be an order and~$\p$ be a prime of~$S$. For fractional~$S$-ideals~$I$ and~$J$, we have that~$I_\p\cong J_\p$ as~$S_\p$-modules if and only if~$(I^t)_\p\cong (J^t)_\p$.
\end{lemma}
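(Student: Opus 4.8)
The plan is to reduce the statement to the completion and then use the fact that trace duals commute with completion, together with the structural features of ideals over a complete local ring. First I would recall that, by the discussion immediately preceding the lemma, $\widehat{(I^t)}_\p = (\widehat{I}_\p)^t$ (trace duality commutes with completion), and similarly for $J$. Moreover, $I_\p \cong J_\p$ as $S_\p$-modules if and only if $\widehat{I}_\p \cong \widehat{J}_\p$ as $\widehat{S}_\p$-modules: the ``only if'' is immediate by applying $-\otimes_{S_\p}\widehat{S}_\p$, and the ``if'' follows because the number of generators and the relevant invariants of a finitely generated module over $S_\p$ can be detected after completion (the localization $S_\p$ is Noetherian local, so $M/\p M \cong \widehat{M}/\p\widehat{M}$, and a module isomorphism can be lifted from the completion by faithfully flat descent). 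Thus it suffices to prove: for fractional $\widehat{S}_\p$-ideals $\widehat{I}_\p$, $\widehat{J}_\p$, one has $\widehat{I}_\p \cong \widehat{J}_\p$ if and only if $(\widehat{I}_\p)^t \cong (\widehat{J}_\p)^t$.

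For this local statement, I would fix once and for all the element $\delta \in \widehat{K}_\p^\times$ such that $\widehat{S}_\p^{\,t} = \delta^{-1}\widehat{S}_\p$, which exists because $\widehat{S}_\p^{\,t}$ is an invertible $\widehat{S}_\p$-ideal (it is a principal fractional ideal, being a lattice over the complete local ring $\widehat{S}_\p$ whose multiplicator ring is $\widehat{S}_\p$ itself, or one can invoke Lemma~\ref{lemma:trace}.\ref{lemma:trace:mult_ring}). Then for any fractional $\widehat{S}_\p$-ideal $L$, Lemma~\ref{lemma:trace}.(b) with $L_1 = \widehat{S}_\p$ and $L_2 = L$ gives $(\widehat{S}_\p : L) = (\widehat{S}_\p^{\,t} L)^t = (\delta^{-1}L)^t = \delta \, L^t$. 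Hence $L^t = \delta^{-1}(\widehat{S}_\p : L)$. Now if $\widehat{I}_\p \cong \widehat{J}_\p$ as $\widehat{S}_\p$-modules, then $\widehat{J}_\p = \alpha \widehat{I}_\p$ for some $\alpha \in \widehat{K}_\p^\times$ (an isomorphism of fractional ideals over any order is multiplication by a unit of the total quotient ring, since both are lattices spanning $\widehat{K}_\p$). Taking colons, $(\widehat{S}_\p : \widehat{J}_\p) = \alpha^{-1}(\widehat{S}_\p : \widehat{I}_\p)$, and multiplying by $\delta^{-1}$ gives $(\widehat{J}_\p)^t = \alpha^{-1}(\widehat{I}_\p)^t$, so $(\widehat{I}_\p)^t \cong (\widehat{J}_\p)^t$. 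The converse follows from the same argument applied to the trace duals, using $(L^t)^t = L$ from Lemma~\ref{lemma:trace}.\ref{lemma:trace:doubledual}.

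The main obstacle I anticipate is the clean justification that a module isomorphism $I_\p \cong J_\p$ forces $J_\p = \alpha I_\p$ for a unit $\alpha$, and the faithfully-flat descent step that lets us pass between $S_\p$ and $\widehat{S}_\p$. Both are standard but need a careful word: for the former, extend the isomorphism $\varphi\colon I_\p \to J_\p$ to a $\widehat{K}_\p$-linear (resp.\ $K$-linear) map after tensoring with the total quotient ring, where it becomes multiplication by some $\alpha$, necessarily a non-zero-divisor sending one lattice onto the other, hence a unit; for the latter, note that $I_\p \cong J_\p$ as $S_\p$-modules can be checked after the faithfully flat base change $S_\p \to \widehat{S}_\p$ because $\mathrm{Hom}$ and $\mathrm{Iso}$ of finitely presented modules are compatible with flat base change and isomorphism descends along faithfully flat maps. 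Once these two points are in place, the rest is the short symmetric computation above, and one could even skip completions entirely and run the whole argument directly over $S_\p$ if one prefers, since $S_\p^{\,t}$ being invertible already gives $S_\p^{\,t} = \delta^{-1}S_\p$ locally — but completing makes the invertibility-implies-principal step most transparent.
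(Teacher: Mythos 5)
Your overall strategy (reduce to the completion via faithfully flat descent, use that trace duals commute with completion, realize an isomorphism of fractional ideals as multiplication by a unit $\alpha$ of the total quotient ring, and get the converse from double duality) is exactly the paper's route. However, there is a genuine error in the middle step: you fix $\delta\in\hat K_\p^\times$ with $\hat S_\p^{\,t}=\delta^{-1}\hat S_\p$, justified by saying that $\hat S_\p^{\,t}$ is principal ``being a lattice whose multiplicator ring is $\hat S_\p$,'' or by invoking Lemma~\ref{lemma:trace}.\ref{lemma:trace:mult_ring}. Neither justification is valid: having multiplicator ring equal to $S_\p$ does not force principality unless $S$ is Gorenstein at $\p$ --- indeed, by Proposition~\ref{prop:Gor_at_p} that implication \emph{characterizes} the Gorenstein property, and $\type_\p(S)$ is by definition the minimal number of generators of $(S^t)_\p$, which exceeds $1$ precisely in the non-Gorenstein case the lemma must cover (e.g.\ the type-$3$ order in Example~\ref{ex:diff_grps_same_end}). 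Lemma~\ref{lemma:trace}.\ref{lemma:trace:mult_ring} only gives $LL^t=S^t$, not invertibility of $S^t$. So as written, your argument silently assumes the Gorenstein hypothesis that the lemma is specifically designed to avoid.

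The good news is that the $\delta$-detour is unnecessary and the proof is easily repaired: once you have $\hat J_\p=\alpha\hat I_\p$ with $\alpha\in\hat K_\p^\times$, the definition of the trace dual gives directly
\[
(\alpha\hat I_\p)^t=\set{x : \Tr(x\alpha\hat I_\p)\subseteq \hat Z_p}=\alpha^{-1}\hat I_\p^{\,t},
\]
so $(\hat J_\p)^t\cong(\hat I_\p)^t$ without any statement about $\hat S_\p^{\,t}$; this is precisely the one-line computation in the paper's proof. With that substitution (and keeping your descent and double-duality steps, which are fine), the argument is correct and coincides with the paper's.
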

\begin{proof}
  By \cite[Ex.~7.5, p.~203]{eis95}, it is enough to prove the equivalent statement with completion instead of localization.
  Assume that~$\hat I_\p \cong \hat J_\p$, that is,~$\hat I_\p = \alpha \hat J_\p$ for some~$\alpha\in \hat K^\times_\p$.
  Then~$\hat I^t_\p = \alpha^{-1} \hat J_\p^t$.
  Hence~$\hat I^t_\p \cong \hat J_\p^t$, as required.
  We used here that completions and trace duals commute, as noted above.
  The converse direction follows from Lemma~\ref{lemma:trace}.\ref{lemma:trace:doubledual}.
\end{proof}

Recall that Lemma~\ref{lemma:loc_glob_fin_mod} provides an~$S$-linear isomorphism of quotients of fractional ideals, under certain local conditions.
However, when comparing a fractional ideal and its dual, we can obtain a~$Z$-linear isomorphism between quotients, regardless of the local behavior.
\begin{lemma}\label{lemma:matlisduality}
  Let~$S$ be an order with fractional ideals~$I$ and~$J$, and let~$r\in S$ be a non-zero divisor.
  Then we have a~$Z$-linear isomorphism 
  \[ \frac{I}{rI} \cong \frac{I^t}{rI^t}. \]
\end{lemma}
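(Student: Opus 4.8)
The fractional ideal~$J$ plays no role in the assertion (it is surely a typographical remnant), so the claim to prove is that~$I/rI\cong I^t/rI^t$ as~$Z$-modules. The plan is to produce a perfect~$Z$-bilinear pairing
\[ \beta\colon \frac{I}{rI}\times \frac{I^t}{rI^t}\longrightarrow \frac{Z}{rZ} \]
coming from the trace form of~$K$, and then to exploit the fact that finite-length modules over the ring~$Z/rZ$ are self-dual with respect to~$\Hom_{Z/rZ}(-,Z/rZ)$. First I would record the preliminary observations: since~$r$ is a non-zero divisor in~$S$ and~$K$ is the total ring of fractions of~$S$, the element~$r$ is a unit in~$K$, so multiplication by~$r$ is injective on every lattice, giving inclusions~$rI\subseteq I$ and~$rI^t\subseteq I^t$; and~$I/rI$ (hence also~$I^t/rI^t$) is a finitely generated torsion~$Z$-module, so it has finite length.

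Next I would construct~$\beta$ by setting~$\beta(\bar x,\bar y)=\Tr_{K/Q}(xy)\bmod rZ$. Well-definedness and~$Z$-bilinearity are immediate from the definition of~$I^t$: one has~$\Tr_{K/Q}(xy)\in Z$ for~$x\in I$, $y\in I^t$, and~$\Tr_{K/Q}(xy)\in rZ$ whenever~$x\in rI$ or~$y\in rI^t$. For non-degeneracy on the left, if~$x\in I$ satisfies~$\Tr_{K/Q}(xI^t)\subseteq rZ$, then~$\Tr_{K/Q}\bigl((x/r)I^t\bigr)\subseteq Z$, so~$x/r\in (I^t)^t=I$ by Lemma~\ref{lemma:trace}.\ref{lemma:trace:doubledual}, i.e.~$\bar x=0$; non-degeneracy on the right is even more direct from the definition of~$I^t$. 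Thus~$\beta$ induces~$Z/rZ$-linear injections~$I^t/rI^t\into\Hom_{Z/rZ}(I/rI,Z/rZ)$ and~$I/rI\into\Hom_{Z/rZ}(I^t/rI^t,Z/rZ)$.

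To upgrade one of these injections to an isomorphism, I would count lengths. The key observation is that~$\operatorname{length}_Z(I/rI)$ does not depend on the fractional ideal~$I$: since for fractional ideals~$I_1,I_2$ their intersection~$I_1\cap I_2$ is again a fractional ideal contained in both, it suffices to compare nested lattices~$L'\subseteq L$, and there the snake lemma applied to multiplication by~$r$ on~$0\to L'\to L\to L/L'\to 0$ gives an exact sequence
\[ 0\to (L/L')[r]\to L'/rL'\to L/rL\to (L/L')/r(L/L')\to 0 \]
whose outer terms have equal length because the kernel and cokernel of an endomorphism of a finite-length module do. In particular~$\operatorname{length}_Z(I/rI)=\operatorname{length}_Z(I^t/rI^t)=:\ell$. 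Now~$Z/rZ$, by the factorization of the ideal~$(r)$ into prime powers together with the Chinese Remainder Theorem, is a finite product of quotients~$R/\mathfrak m^{a}$ of discrete valuation rings~$R$; over such a uniserial ring every finitely generated module is a direct sum of cyclic modules~$R/\mathfrak m^{j}$, for which~$\Hom_{R/\mathfrak m^{a}}(R/\mathfrak m^{j},R/\mathfrak m^{a})\cong R/\mathfrak m^{j}$. Hence~$\Hom_{Z/rZ}(-,Z/rZ)$ is length-preserving and each finite~$Z/rZ$-module~$M$ satisfies~$M\cong\Hom_{Z/rZ}(M,Z/rZ)$. Consequently~$\Hom_{Z/rZ}(I/rI,Z/rZ)$ has length~$\ell$, so the injection~$I^t/rI^t\into\Hom_{Z/rZ}(I/rI,Z/rZ)$ is an isomorphism; composing with~$\Hom_{Z/rZ}(I/rI,Z/rZ)\cong I/rI$ gives~$I^t/rI^t\cong I/rI$ as~$Z/rZ$-modules, hence as~$Z$-modules.

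I expect the trace-pairing identification to be the conceptual core of the argument, while the fussiest step will be the length bookkeeping, chiefly because~$Z$ is only assumed to be Dedekind rather than~$\Z$ or~$\Z_p$. A cleaner-looking alternative, which I would keep in reserve, is to reduce to the completions at the finitely many primes dividing~$r$: over a complete discrete valuation ring one may invoke classical Matlis duality for Artinian modules directly, but even then the trace computation identifying the Matlis dual of~$I/rI$ with~$I^t/rI^t$ remains unavoidable and is really where the content lies.
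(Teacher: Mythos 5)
There is a genuine gap, and it comes from conflating~$r\in S$ with an element of~$Z$. The lemma (and its use in the paper, where~$r=1-\pi^n$) allows~$r$ to be any non-zero-divisor of the order~$S$, so in general~$r\notin Z$ and even~$r\notin Q$. Then~$rZ$ is not an ideal of~$Z$ (indeed~$rZ\cap Z=0$ when~$r\notin Q$), so the target~$Z/rZ$ of your pairing is not defined, and the key well-definedness claim ``$\Tr_{K/Q}(xy)\in rZ$ whenever~$x\in rI$ or~$y\in rI^t$'' fails: the trace is only~$Q$-linear, so~$\Tr_{K/Q}(rx'y)$ is \emph{not}~$r\Tr_{K/Q}(x'y)$ for~$r\in S\setminus Q$. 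The same problem infects the non-degeneracy step (passing from~$\Tr_{K/Q}(xI^t)\subseteq rZ$ to~$\Tr_{K/Q}((x/r)I^t)\subseteq Z$) and the final structural argument, which factors ``the ideal~$(r)$'' in the Dedekind domain~$Z$ and invokes self-duality of~$Z/rZ$-modules; none of this applies as written.

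The skeleton is salvageable, and two of your ingredients are sound: the snake-lemma computation showing that~$\mathrm{length}_Z(L/rL)$ is the same for nested full lattices stable under~$r$ (hence for~$I$ and~$I^t$, via~$I\cap I^t$), and the use of~$(I^t)^t=I$. To repair the pairing, replace the target by~$Q/Z$ and set~$\beta(\bar x,\bar y)=\Tr_{K/Q}(xy/r)\bmod Z$; this is well defined and non-degenerate exactly by the computations you attempted (if~$\Tr_{K/Q}((x/r)I^t)\subseteq Z$ then~$x/r\in(I^t)^t=I$, i.e.\ $x\in rI$), and one concludes using that~$\Hom_Z(-,Q/Z)$ preserves length and satisfies~$M\cong\Hom_Z(M,Q/Z)$ for finite-length torsion modules over a Dedekind domain. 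For comparison, the paper does not argue directly at all: it quotes \cite[Lem.~2.4.(iv)]{MarsegliaCMType_arxiv}, which applies Matlis duality to the Artinian ring~$S/rS$ (viewing~$I/rI$ and~$I^t/rI^t$ as dual modules over it) — essentially the ``reserve'' route you mention at the end, and one that sidesteps the~$r\notin Z$ issue by working over~$S/rS$ rather than over~$Z$.
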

\begin{proof}
  This is a special case of \cite[Lem.~2.4.(iv)]{MarsegliaCMType_arxiv}, which is an application of Matlis duality; see for example \cite[Thm.~1.7]{Ooishi76}.
\end{proof}

We now recall some properties of orders that were studied in \cite[Sec.~3]{MarsegliaCMType_arxiv}.
The \emph{Cohen-Macaulay type} of an order~$S$ at a prime~$\p$, denoted by~$\type_\p(S)$, is the minimal number of generators of~$(S^t)_\p$ as an~$S_\p$-
This definition is equivalent to the usual one; see~\cite[Sec.~3]{MarsegliaCMType_arxiv}.
We say that an order~$S$ is \emph{Gorenstein} at a prime~$\p$ if its Cohen-Macaulay~$\type_\p(S)$ is equal to~$1$, that is, if~$(S^t)_\p$ is a principal~$S_\p$-module.
We say that~$S$ is Gorenstein if it is so at every prime.
This definition of Gorenstein is equivalent to the ones typically used in the literature. For example, a ring with finite (Krull) dimension is called Gorenstein if it has finite injective dimension. See \cite[Sec.~1]{basshy63} for other equivalent definition, and see \cite[Thm.~6.3]{basshy63} and \cite[Prop.~2.7]{buchlenstra} for the proof of the equivalence with the one used in this paper.
In fact, using the latter reference, one can deduce the following lemma.
We give a complete proof for convenience.
\begin{prop}\label{prop:Gor_at_p}
  Let~$S$ be an order and~$\p$ be prime of~$S$.
  Then~$S$ is Gorenstein at~$\p$ if and only if every fractional~$S$-ideal~$I$ with~$(I:I)_\p=S_\p$ is principal at~$\p$.
\end{prop}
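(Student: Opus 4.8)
The plan is to reduce everything to the local ring $S_\p$ and work with the trace dual. Since the condition ``$S$ is Gorenstein at $\p$'' means precisely that $(S^t)_\p$ is principal as an $S_\p$-module, and since a fractional $S$-ideal $I$ with $(I:I)_\p = S_\p$ is, after localizing, a fractional $S_\p$-ideal with multiplicator ring exactly $S_\p$, the statement becomes: $(S_\p)^t$ is principal if and only if every fractional $S_\p$-ideal with multiplicator ring $S_\p$ is principal. Here I would use that localization commutes with the formation of trace duals, colons, and multiplicator rings (as recorded in the discussion preceding Lemma~\ref{lemma:loc_dual}), so that $(S^t)_\p = (S_\p)^t$ and $(I:I)_\p = (I_\p : I_\p)$. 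I might also pass to the completion $\hat S_\p$ using \cite[Ex.~7.5, p.~203]{eis95} if that is more convenient, but it is not essential.

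For the direction ``$\Leftarrow$'': suppose every fractional $S$-ideal $I$ with $(I:I)_\p = S_\p$ is principal at $\p$. The trace dual $S^t$ is itself a fractional $S$-ideal, and by Lemma~\ref{lemma:trace}.\ref{lemma:trace:mult_ring}, $(S^t : S^t) = S$, so in particular $(S^t : S^t)_\p = S_\p$. Applying the hypothesis to $I = S^t$ gives that $(S^t)_\p$ is principal at $\p$, i.e.\ $S$ is Gorenstein at $\p$.

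For the direction ``$\Rightarrow$'': assume $(S^t)_\p = \alpha S_\p$ for some $\alpha \in K^\times$. Let $I$ be any fractional $S$-ideal with $(I:I)_\p = S_\p$; I want $I_\p$ principal. The key identity is Lemma~\ref{lemma:trace}.\ref{lemma:trace:mult_ring} again, now applied to the order $S_\p$ and the fractional $S_\p$-ideal $I_\p$: since $(I_\p : I_\p) = S_\p$, we get $I_\p (I_\p)^t = (S_\p)^t = \alpha S_\p$. Therefore $I_\p \cdot \big( \alpha^{-1} (I_\p)^t \big) = S_\p$, which exhibits $I_\p$ as an invertible $S_\p$-ideal. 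But an invertible ideal over a local ring is principal (this is \cite[Lem.~2.17]{MarsegliaCMType_arxiv}, or immediate: from $\sum_i a_i b_i = 1$ with $a_i \in I_\p$, $b_i \in \alpha^{-1}(I_\p)^t$, some $a_i b_i$ is a unit, forcing $I_\p = a_i S_\p$). Hence $I$ is principal at $\p$, as desired.

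The only real subtlety — the ``main obstacle'', though it is mild — is making sure all the dualizing and colon operations genuinely commute with localization, so that statements about $S$ and its fractional ideals can be checked on $S_\p$ and $(S_\p)^t$; once that bookkeeping is in place, both implications fall out of Lemma~\ref{lemma:trace}.\ref{lemma:trace:mult_ring} together with the fact that invertible equals principal over a local ring.
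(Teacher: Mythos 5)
Your argument is correct and takes essentially the same route as the paper: both directions hinge on Lemma~\ref{lemma:trace}.\ref{lemma:trace:mult_ring} (applied locally to $I$ for the forward direction, and to $I=S^t$, whose multiplicator ring is $S$, for the converse), together with the fact that an invertible fractional ideal over a local ring is principal. The only cosmetic difference is that the paper passes to the completion $\hat S_\p$ rather than working with the localization, because the compatibility of trace duals is recorded in Section~\ref{sec:frac_ideals} only for completions (not for localizations, as your bookkeeping step assumes); your fallback of completing via \cite[Ex.~7.5, p.~203]{eis95} is exactly what the paper does.
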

\begin{proof}
  Observe that~$S$ is Gorenstein at~$\p$ if and only if~$\hat S^t_\p = \alpha \hat S_\p$ for some~$\alpha\in \hat{K}^\times_\p$.
  Assume now that~$S$ is Gorenstein at~$\p$.
  Pick a fractional~$S$-ideal~$I$ with~$(I:I)_\p=S_\p$.
  By taking the completion we get that~$\widehat{(I:I)}_\p=\hat S_\p$.
  By Lemma \ref{lemma:trace}.\ref{lemma:trace:mult_ring}, we obtain that~$\hat I_\p \hat I^t_\p=\hat S^t_\p = \alpha \hat S_\p$.
  Hence,~$\hat I_\p$ is invertible.
  Because invertible fractional~$\hat S_\p$-ideals are principal, we get that~$I_\p$ is a principal~$S_\p$-module, as required.
  For the converse, it is enough to observe that~$S^t$ has multiplicator ring~$S$.
\end{proof}

Similarly to the Gorenstein case, locally, we have a classification of fractional ideals with multiplicator ring of type~$2$.
\begin{prop}[{\cite[Thm.~6.2]{MarsegliaCMType_arxiv}}]\label{prop:cmtype2atP}
  Let~$S$ be an order and~$\p$ a prime of~$S$ such that
 ~$\type_\p(S)=2$.
  Then for every fractional~$S$-ideal~$I$ such that~$(I:I)_\p=S_\p$ then either~$I_\p\cong S_\p$ or~$I_\p\cong (S^t)_\p$.
\end{prop}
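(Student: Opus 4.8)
The plan is to localize and complete at $\p$, translate the condition on the multiplicator ring into an identity of trace duals, and then read off the dichotomy from the fact that the canonical module $S^t$ is minimally generated by exactly $\type_\p(S)=2$ elements. For the first step, by \cite[Ex.~7.5, p.~203]{eis95} (used already in the proof of Lemma~\ref{lemma:loc_dual}) two fractional $S_\p$-ideals are isomorphic precisely when their $\hat{S}_\p$-completions are, and trace duals commute with completion, so it suffices to prove $\hat{I}_\p\cong\hat{S}_\p$ or $\hat{I}_\p\cong\hat{S}^t_\p$ as $\hat{S}_\p$-modules. Renaming $\hat{S}_\p$ to $S$ — a complete Cohen--Macaulay local ring of dimension $1$, with maximal ideal $\mathfrak m$ and canonical module $\omega:=S^t$ — the hypothesis reads $\mu_S(\omega)=2$, and $I$ is a fractional $S$-ideal with $(I:I)=S$. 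By Lemma~\ref{lemma:trace}.\ref{lemma:trace:mult_ring} the latter is equivalent to $I\,I^t=S^t=\omega$. Both candidate ideals satisfy this identity: $S\cdot S^t=\omega$, and $\omega\cdot\omega^t=\omega\cdot S=\omega$ using $\omega^t=(S^t)^t=S$ (Lemma~\ref{lemma:trace}.\ref{lemma:trace:doubledual}).

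Next I would reduce to a statement about numbers of generators. Rescaling $I$ by an element of $K^\times$ changes neither its isomorphism class nor its multiplicator ring and preserves the identity $I I^t=\omega$; choosing a nonzero scalar in $(S:I)$ we may assume $I\subseteq S\subseteq\omega$ (the last inclusion because $\Tr_{K/Q}(S)\subseteq Z$), whence $I^t\supseteq\omega\supseteq S$. It is enough to show that if $I$ is not principal then $I^t$ is principal: indeed $I^t$ principal means $I^t\cong S$, hence $I=(I^t)^t\cong S^t=\omega$ by Lemma~\ref{lemma:trace}.\ref{lemma:trace:doubledual}, whereas a principal fractional ideal $I$ is free of rank one, so $I\cong S$. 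Equivalently, it suffices to prove the equality $\mu_S(I)\cdot\mu_S(I^t)=2$, since if the product is $2$ then one factor is $1$, i.e.\ one of $I,I^t$ is cyclic and therefore free of rank one. The inequality $\mu_S(I)\cdot\mu_S(I^t)\ge 2$ is immediate: the multiplication map $I\otimes_S I^t\twoheadrightarrow I I^t=\omega$ is surjective, so $\mu_S(I)\cdot\mu_S(I^t)=\mu_S(I\otimes_S I^t)\ge\mu_S(\omega)=2$. This is exactly the analogue, one step up, of Proposition~\ref{prop:Gor_at_p}, which handles $\type_\p(S)=1$ by asserting that every such $I$ already has $\mu_S(I)=1$.

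The remaining — and only genuinely difficult — point is the reverse inequality $\mu_S(I)\cdot\mu_S(I^t)\le 2$, equivalently the exclusion of a third isomorphism class of fractional ideals with multiplicator ring $S$ beyond $S$ and $S^t$. This is where the hypothesis $\type_\p(S)=2$ must be used in an essential way, through the fine structure it forces on $S$ at $\p$: the two-generatedness of $\omega$, the consequent constraints on the quotient $\omega/\mathfrak m\omega$ and on $\widehat{\cO_{K,\p}}$ as an $\hat{S}_\p$-module, and the length identity $\ell(I^t/\omega)=\ell(S/I)$ obtained by dualizing $0\to I\to S\to S/I\to 0$ into the canonical module $\omega$. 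Carrying this out is exactly \cite[Thm.~6.2]{MarsegliaCMType_arxiv}, which rests on Bass's study of one-dimensional rings of small Cohen--Macaulay type \cite[\S6--7]{basshy63} together with \cite[Prop.~2.7]{buchlenstra}; everything else above is formal bookkeeping with completions and trace duals, so this is where I expect the work to lie.
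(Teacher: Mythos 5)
The paper gives no proof of this proposition at all: it is imported verbatim from \cite[Thm.~6.2]{MarsegliaCMType_arxiv}, which is exactly the citation in the statement's bracket. Your preliminary steps are correct but purely formal (completion via \cite[Ex.~7.5]{eis95}, the equivalence $(I:I)=S \Leftrightarrow II^t=S^t$, the surjection $I\otimes_S I^t \twoheadrightarrow S^t$ giving $\mu(I)\mu(I^t)\geq 2$, and the observation that the dichotomy is equivalent to one of $I$, $I^t$ being principal, i.e.\ to $\mu(I)\mu(I^t)=2$), and since you then defer the only substantive point to that same reference, your treatment coincides with the paper's, which simply cites the result.
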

We exploit the classifications of Proposition~\ref{prop:Gor_at_p} and Proposition~\ref{prop:cmtype2atP} to understand quotient of fractional ideals, as we show in the next proposition.
We invite the reader to compare the statement with Proposition~\ref{prop:coprime_cond}, where the isomorphism is~$S$-linear, while here we only get a~$Z$-linear isomorphism.
\begin{prop}\label{prop:cmtype_at_most_2_at_p}
  Let~$r\in S$ be a non-zero divisor.
  If
 ~$\type_\p(S)\leq 2$ 
  for all primes~$\p$ of~$S$ containing~$r$ then for all fractional~$S$-ideal~$I$ with~$(I:I)_\p=S_\p$ we have a~$Z$-linear isomorphism
  \[ \frac{I}{rI} \cong \frac{S}{r S}.\]
\end{prop}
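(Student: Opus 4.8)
The plan is to reduce to a local statement at each prime $\p$ containing $r$, and then use the classification results already established. First I would invoke Lemma~\ref{lem:dir_sum_loc} (or rather the strategy behind Lemma~\ref{lemma:loc_glob_fin_mod}) to reduce the problem: since $I/rI$ and $S/rS$ are finite $Z$-modules supported only at the finitely many primes $\p$ of $S$ containing $r$, it suffices to produce a $Z$-linear isomorphism $(I/rI)_\p \cong (S/rS)_\p$ for each such prime $\p$ and then take the direct sum. Note that $(I/rI)_\p \cong I_\p/rI_\p$ and similarly for $S$, so the task becomes: for each prime $\p \ni r$, show $I_\p/rI_\p \cong S_\p/rS_\p$ as $Z$-modules (equivalently, as $Z_p$-modules after completing, where $p$ is the contraction of $\p$).

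Next I would fix such a prime $\p$ and split into two cases according to $\type_\p(S)$. If $\type_\p(S) = 1$, then $S$ is Gorenstein at $\p$, and since $(I:I)_\p = S_\p$ by hypothesis, Proposition~\ref{prop:Gor_at_p} gives that $I_\p$ is a principal $S_\p$-module, say $I_\p = \alpha S_\p$ for some unit $\alpha \in K_\p^\times$. Then multiplication by $\alpha$ gives an $S_\p$-linear (in particular $Z$-linear) isomorphism $S_\p/rS_\p \xrightarrow{\sim} I_\p/rI_\p$. If instead $\type_\p(S) = 2$, then Proposition~\ref{prop:cmtype2atP} tells us that either $I_\p \cong S_\p$ — in which case we argue exactly as before — or $I_\p \cong (S^t)_\p$ as $S_\p$-modules. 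In the latter case, the $S_\p$-linear isomorphism $I_\p \cong (S^t)_\p$ induces an $S_\p$-linear, hence $Z$-linear, isomorphism $I_\p/rI_\p \cong (S^t)_\p/r(S^t)_\p$, and then Lemma~\ref{lemma:matlisduality} applied over the order $S_\p$ (with the same non-zero-divisor $r$) provides a $Z$-linear isomorphism $(S^t)_\p/r(S^t)_\p \cong S_\p/rS_\p$. Composing these yields the desired $Z$-linear isomorphism in all cases.

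Finally I would assemble the local isomorphisms: taking the direct sum over the primes $\p$ containing $r$ of the maps constructed above, and using the decompositions $I/rI \cong \bigoplus_{\p \ni r} I_\p/rI_\p$ and $S/rS \cong \bigoplus_{\p \ni r} S_\p/rS_\p$ from Lemma~\ref{lem:dir_sum_loc}, produces a $Z$-linear isomorphism $I/rI \cong S/rS$.

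The main subtlety — and the reason the conclusion is only $Z$-linear rather than $S$-linear, in contrast with Proposition~\ref{prop:coprime_cond} — is the non-Gorenstein case $\type_\p(S) = 2$: there the module $I_\p$ may genuinely be isomorphic to $(S^t)_\p$ rather than $S_\p$, and the passage $(S^t)_\p/r(S^t)_\p \cong S_\p/rS_\p$ is not induced by any $S_\p$-linear map (indeed it cannot be, since $(S^t)_\p \not\cong S_\p$ as $S_\p$-modules when $S$ is not Gorenstein at $\p$). One must therefore be careful to invoke Matlis duality (via Lemma~\ref{lemma:matlisduality}) rather than attempting a module-theoretic identification, and to check that Lemma~\ref{lemma:matlisduality} applies after localization — i.e. that $r$ remains a non-zero-divisor in $S_\p$, which is immediate since localization is flat and $r$ is a non-zero-divisor in $S$. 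Everything else is routine bookkeeping with localizations of finite-length modules.
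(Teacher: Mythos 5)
Your overall strategy is exactly the paper's: decompose $I/rI$ and $S/rS$ into their localizations at the primes containing $r$ via Lemma~\ref{lem:dir_sum_loc}, handle the Gorenstein case with Proposition~\ref{prop:Gor_at_p}, handle type $2$ with Proposition~\ref{prop:cmtype2atP}, and use Matlis duality to compare $(S^t)_\p$ with $S_\p$ after quotienting by $r$. There is, however, one step that is not justified as written: the invocation of ``Lemma~\ref{lemma:matlisduality} applied over the order $S_\p$''. That lemma is stated for orders in the sense of Section~\ref{sec:frac_ideals}, i.e.\ rings that are $Z$-lattices in $K$, and the localization $S_\p$ at a prime of $S$ is in general \emph{not} finitely generated as a module over $Z$ (or over $Z$ localized at the contraction $p$) --- for instance, localizing $\Z[i]$ at one of the two primes above a split rational prime already produces a ring containing non-integral elements such as $\pi/p$. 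So $S_\p$ is not an order in the paper's framework, and the hypothesis you flagged (that $r$ stays a non-zero-divisor) is not the one that needs checking. Nor can you instead localize the global isomorphism $S^t/rS^t\cong S/rS$ furnished by Lemma~\ref{lemma:matlisduality}: that isomorphism is only $Z$-linear, so it respects the decomposition into $p$-primary pieces but not the finer decomposition into $\p$-primary pieces of the $S$-module structure, which is what you need at a fixed $\p$.

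The paper closes exactly this gap by passing to completions: $\hat S_\p$ \emph{is} a genuine $\hat Z_p$-order, trace duals commute with completion (so $\hat S^t_\p$ is unambiguous), Lemma~\ref{lemma:matlisduality} applied over $\hat S_\p$ gives a $\hat Z_p$-linear isomorphism $\hat S^t_\p/r\hat S^t_\p\cong \hat S_\p/r\hat S_\p$, and one then descends to a $Z_p$-linear isomorphism $M_\p\cong N_\p$ because these are finitely generated (indeed finite) modules, citing \cite[Ex.~7.5]{eis95}. Alternatively, you could repair your argument by proving the local duality statement directly (e.g.\ identifying $(S^t/rS^t)_\p$ with the Pontryagin dual of $(S/rS)_\p$), but some such extra step is needed; the rest of your proposal, including the assembly over the primes containing $r$ and the explanation of why the conclusion is only $Z$-linear, matches the paper's proof.
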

\begin{proof}
  Fix~$\p$ containing~$r$.
  If~$S$ is Gorenstein at~$\p$ then~$I_\p\cong S_\p$ by Proposition~\ref{prop:Gor_at_p}.
  If 
 ~$\type_\p(S)=2$ then
 ~$I_\p \cong S_\p$ or~$I_\p \cong (S^t)_\p$
  by Proposition \ref{prop:cmtype2atP}.
  Set~$M=I/rI$ and~$N=S/r S$.
  If~$I_\p \cong S_\p$ then we have an induced~$S_\p$-linear isomorphism
  \[ M_\p \cong N_\p. \]

  If~$I_\p \cong (S^t)_\p$ then first we observe that~$\hat{I}_\p\cong \hat{S}^t_\p$.
  So we have an induced~$\hat{S}_\p$-linear isomorphism
  \[ \hat{M}_\p \cong \frac{\hat S^t_\p}{r \hat S^t_\p},\] 
  which combined with Lemma~\ref{lemma:matlisduality} gives a~$\hat{Z}_p$-linear isomorphism
  \[ \hat{M}_\p \cong \hat{N}_\p. \]
  Since~$M_\p$ and~$N_\p$ are finitely generated~$Z_p$-modules,
  we obtain a~$Z_p$-linear isomorphism
  \[ M_\p\cong N_\p \]
  by \cite[Ex.~7.5, p.~203]{eis95}.

  By Lemma~\ref{lem:dir_sum_loc}, we have
  \[ M \cong \bigoplus_{\p} M_\p \text{ and } N \cong \bigoplus_{\p} N_\p, \]
  where the direct sums run over the primes~$\p$ containing~$r$, since~$M_\p=N_\p=0$ for all other primes.
  We conclude that~$M\cong N$ as~$Z$-modules.
\end{proof}

As previously anticipated, we will apply the results contained in this section to orders in commutative endomorphism algebras of abelian varieties over finite fields. 
Such algebras have an automorphism that corresponds to the Rosati involution, and acts as complex conjugation.
Putting together previously stated results with this extra structure, we obtain the following proposition, which will be used to prove that certain abelian varieties are not self-dual in Proposition~\ref{prop:not_self_dual}.

\begin{prop}\label{prop:cmtype2notselfdual}
  Assume that~$K$ has an involution~$x\mapsto\bar x$ which fixes~$Q$ point-wise.
  Let~$S$ be an order in~$K$ satisfying~$S=\bar{S}$.
  Let~$\p$ be a prime of~$S$ such that $\type_\p(S)=2$.
  Then~$\p=\bar\p$ if and only if all fractional~$S$-ideals~$I$ with~$(I:I)_\p=S_\p$ satisfy~$I_\p \not \cong (\bar{I}^t)_\p$.
\end{prop}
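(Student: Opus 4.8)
The plan is to exploit the local classification of Proposition~\ref{prop:cmtype2atP}: since~$\type_\p(S)=2$, every fractional~$S$-ideal~$I$ with~$(I:I)_\p=S_\p$ satisfies~$I_\p\cong S_\p$ or~$I_\p\cong(S^t)_\p$, and these two options are distinct because~$S_\p\not\cong(S^t)_\p$ (otherwise~$\type_\p(S)=1$). First I would record the dictionary governing the involution. Since~$S=\bar S$, the map~$x\mapsto\bar x$ is a ring automorphism of~$S$ that exchanges the primes~$\p$ and~$\bar\p$. Because~$\Tr_{K/Q}(\bar x)=\Tr_{K/Q}(x)$ for all~$x\in K$, this automorphism commutes with trace duals, so~$\bar{S^t}=S^t$ and~$(\bar I)^t=\bar{I^t}$ for every fractional ideal~$I$; I write~$\bar I^t$ for this common lattice, as in the statement. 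Finally, localizing at~$\p$ and then conjugating is the same as localizing at~$\bar\p$, hence~$(\bar J)_\p=\bar{J_{\bar\p}}$ for every fractional ideal~$J$, and therefore~$(\bar I^t)_\p=\bar{(I^t)_{\bar\p}}$.

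For the implication ``$\p=\bar\p\Rightarrow$'', note that~$x\mapsto\bar x$ now restricts to a ring automorphism of~$S_\p$ fixing~$S_\p$ and~$(S^t)_\p$ setwise, and that~$(\bar I^t)_\p=\bar{(I^t)_\p}$. Take~$I$ with~$(I:I)_\p=S_\p$. If~$I_\p\cong S_\p$, then~$(I^t)_\p\cong(S^t)_\p$ by Lemma~\ref{lemma:loc_dual}, so conjugating gives~$(\bar I^t)_\p\cong(S^t)_\p\not\cong S_\p\cong I_\p$. If~$I_\p\cong(S^t)_\p$, then~$(I^t)_\p\cong((S^t)^t)_\p=S_\p$ by Lemma~\ref{lemma:loc_dual} and Lemma~\ref{lemma:trace}.\ref{lemma:trace:doubledual}, so~$(\bar I^t)_\p\cong S_\p\not\cong(S^t)_\p\cong I_\p$. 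In both cases~$I_\p\not\cong(\bar I^t)_\p$, as required.

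For the converse I would prove the contrapositive: assuming~$\p\ne\bar\p$, I exhibit a fractional~$S$-ideal~$I$ with~$(I:I)_\p=S_\p$ and~$I_\p\cong(\bar I^t)_\p$. The key point is that now~$\p$ and~$\bar\p$ are distinct primes, so the local shapes of~$I$ at these two primes can be chosen independently. Concretely, using~$S\subseteq S^t$ and the decomposition~$S^t/S\cong\bigoplus_{\mathfrak q}(S^t/S)_{\mathfrak q}$ of Lemma~\ref{lem:dir_sum_loc}, let~$I$ be the fractional ideal with~$S\subseteq I\subseteq S^t$ whose quotient~$I/S$ is the single summand~$(S^t/S)_{\bar\p}$; this summand is nonzero since~$\type_{\bar\p}(S)=\type_\p(S)=2$ by the involution-symmetry. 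Then~$I_{\bar\p}=(S^t)_{\bar\p}$ while~$I_{\mathfrak q}=S_{\mathfrak q}$ for every prime~$\mathfrak q\ne\bar\p$; in particular~$I_\p=S_\p$, so~$(I:I)_\p=S_\p$. Moreover, from~$I_{\bar\p}=(S^t)_{\bar\p}$ and Lemmas~\ref{lemma:loc_dual} and~\ref{lemma:trace}.\ref{lemma:trace:doubledual} we get~$(I^t)_{\bar\p}\cong((S^t)^t)_{\bar\p}=S_{\bar\p}$, whence~$(\bar I^t)_\p=\bar{(I^t)_{\bar\p}}\cong\bar{S_{\bar\p}}=S_\p=I_\p$. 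This settles the claim.

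The step requiring the most care is the construction in the converse --- verifying that the preimage of a single component of~$S^t/S$ indeed agrees with~$S$ locally away from~$\bar\p$ (in particular at~$\p$) and with~$S^t$ locally at~$\bar\p$ --- together with the bookkeeping of how the three operations~$x\mapsto\bar x$,~$I\mapsto I^t$, and localization interact. The decisive feature is that conjugation swaps~$\p$ and~$\bar\p$ when they differ but fixes~$\p$ when~$\p=\bar\p$, which is exactly what forces the two directions to behave oppositely; once the dictionary is in place, the~$\p=\bar\p$ direction is a short case check.
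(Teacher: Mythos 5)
Your proof is correct and takes essentially the same approach as the paper: the forward direction is the same case check via Proposition~\ref{prop:cmtype2atP} and Lemma~\ref{lemma:loc_dual}, and for the converse you likewise exhibit a fractional ideal that is locally~$S^t$ at~$\bar\p$ and locally~$S$ elsewhere (in particular at~$\p$). The only, cosmetic, difference is that you construct this witness as the preimage in~$S^t$ of the~$\bar\p$-primary component of~$S^t/S$ (using~$S\subseteq S^t$), whereas the paper takes~$I = dS^t + \bar\p^m$ with~$dS^t\subseteq S$; both yield the same local data and hence the same conclusion.
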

\begin{proof}
  Assume that~$\p=\bar\p$. 
  By Proposition \ref{prop:cmtype2atP} we have that~$I_\p\cong S_\p$ or~$I_\p\cong (S^t)_\p$.
  Assume the former. 
  By Lemma~\ref{lemma:loc_dual}, we obtain 
  \[ (\bar{I}^t)_\p = (\bar{I}^t)_{\bar\p}\cong (\bar S^t)_{\bar \p} = (S^t)_\p. \]
  Similarly, if~$I_\p\cong (S^t)_\p$ then
  \[ (\bar{I}^t)_\p = (\bar{I}^t)_{\bar\p} \cong \bar S_{\bar \p} = S_\p. \]
  In both cases, if~$I_\p\cong (\bar I^t)_\p$ then~$S^t$ is principal at~$\p$, that is,~$S$ is Gorenstein at~$\p$, which is a contradiction.
  
  Now assume that~$\p \neq \bar \p$.
  Let~$d\in K^\times$ such that~$dS^t\subseteq S$ and~$m>0$ such that $\bar \p^m S_{\bar \p} \subseteq (dS^t)_{\bar \p}$.
  Consider the fractional~$S$-ideal~$I$ defined as:
  \[ I = dS^t +\bar\p^m. \]
  For every~$\mathfrak{l}\neq \bar \p$ we have
  \[
      I_\mathfrak{l} = (dS^t)_\mathfrak{l} + S_\mathfrak{l} = S_\mathfrak{l},
  \]
  and
  \[
    I_{\bar\p} = (dS^t)_{\bar\p} +\bar\p^m S_{\bar\p} = (dS^t)_{\bar\p}.
  \]
  It follows by Lemma~\ref{lemma:loc_dual} that~$(\bar I^t)_\p \cong \bar S_\p = S_\p$, which gives us~$I_\p\cong (\bar I^t)_\p$.
  Moreover, we see that~$(I:I)=S$ by checking the equality locally at every prime.
\end{proof}
\begin{remark}
  Note that in the proof of Proposition~\ref{prop:cmtype2notselfdual} we showed that if~$\p\neq\bar\p$ then there exists a fractional ideal~$I$ such that~$I_\p \cong (\bar{I}^t)_\p$ with multiplicator ring~$S=(I:I)$ globally not only locally at~$\p$.
\end{remark}

\section{Groups of rational points and Gorenstein orders}\
\label{sec:gor}

Our goal is to understand groups of rational points~$A(\F_q)$ for abelian varieties~$A$ defined over~$\F_q$. 
To accomplish this goal in practice, it is productive to view~$A(\F_q)$ as not merely a group, but as a module over its endomorphism ring~$\End_{\F_q}(A)$. Although requesting a description of the additional structure may appear to make the problem harder \emph{a priori}, the module structure can be exploited and cleanly described in many cases, which allows one to deduce the group structure immediately.

Given a separable endomorphism~$s\colon A\to A$ of an abelian variety~$A$ over~$\F_q$, we denote by~$A[s]$ the~$\bar\F_q$-points of the kernel of~$s$.
\begin{prop} 
  \label{prop:norm_deg}
  If~$A$ is a squarefree abelian variety over~$\F_q$ and~$s$ is a separable endomorphism of~$A$, then~$\#A[s]=\deg(s) = N_{K/\Q}(s)$ where~$K = \End(A)\otimes_\Z\Q$.
\end{prop}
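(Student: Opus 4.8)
The plan is to compute $\#A[s]$ in two ways and show both equal $N_{K/\Q}(s)$. First I would recall that for any separable isogeny (or more generally separable endomorphism) $s\colon A\to A$, the cardinality $\#A[s]$ of the group of geometric points of $\ker s$ equals the degree $\deg(s)$, which is the rank of $s$ as an element of $\End(A)$, i.e.\ the degree of the induced map on the function field; this is standard (e.g.\ Mumford, Abelian Varieties). So the content is the identity $\deg(s) = N_{K/\Q}(s)$.

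Next I would invoke the structure we have fixed in the introduction: since $A$ is squarefree, $\End(A)\otimes_\Z\Q \cong K$ is a commutative \'etale $\Q$-algebra of dimension $2g = 2\dim A$ over $\Q$, and $\End(A)$ is identified with an order $S$ in $K$. The degree $\deg(s)$ is known to be a multiplicative, $\Z$-valued function on nonzero endomorphisms, and it extends to a homogeneous polynomial of degree $2g$ on $\End(A)\otimes_\Z\Q$ — this is the ``degree form''. On the other hand, the norm $N_{K/\Q}$ is the determinant of the multiplication map on $K$ viewed as a $\Q$-vector space, hence also a homogeneous degree-$2g$ multiplicative function. The cleanest route is to identify these two forms. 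One standard way: for $s\in S$ not a zero-divisor, multiplication-by-$s$ on the free $\Z$-module $S$ of rank $2g$ has finite cokernel of size $|N_{K/\Q}(s)|$ (and the sign is positive because $K$ is totally complex here, so $N_{K/\Q}(s) > 0$); and this cokernel $S/sS$ is $\Z$-linearly dual, or rather closely related, to $A[s]$ via the Tate module description — indeed for separable $s$ one has $A[s] \cong \End(A)/s\,\End(A)$ as $\End(A)$-modules, essentially the argument used later in the paper (compare Main Theorem~\ref{main-thm:main-tools}, taking $s = 1-\pi^n$). Alternatively, and perhaps more elementarily, I would use the action of $s$ on the ($\ell$-adic, or full) Tate module: $T_\ell A$ is free of rank $2g$ over $\Z_\ell$, the characteristic polynomial of $s$ acting on $V_\ell A = T_\ell A\otimes \Q_\ell$ is (a power of, but here exactly, by squarefreeness) the characteristic polynomial of $s\in K$ acting on $K$ by multiplication, and $\deg(s) = \det(s \mid V_\ell A) = N_{K/\Q}(s)$ by the standard formula $\#(A[s]) = |\det(s\mid T_\ell A)|_\ell^{-1}\cdots$ — more precisely $\deg(s) = \prod_\ell \#(T_\ell A / sT_\ell A) = \prod_\ell |\det(s\mid T_\ell A)|_\ell^{-1}$, and this product equals $\prod_\ell |N_{K/\Q}(s)|_\ell^{-1} = N_{K/\Q}(s)$ since $N_{K/\Q}(s) > 0$.

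The main obstacle — really the only point requiring care — is pinning down that the characteristic polynomial of $s$ on $V_\ell A$ coincides with the characteristic polynomial of multiplication-by-$s$ on $K$, rather than merely having the same roots up to multiplicity. This is exactly where squarefreeness enters: the $\End(A)\otimes\Q_\ell = K\otimes_\Q\Q_\ell$-module $V_\ell A$ has $\Q_\ell$-dimension $2g = \dim_\Q K$, and since $K\otimes\Q_\ell$ is a product of fields (étale), a faithful module of this dimension must be free of rank one, so the action of $s$ is conjugate to multiplication-by-$s$ on $K\otimes\Q_\ell$. From there the determinant computation is immediate and the sign issue is handled by noting $h$ has no real roots (squarefree case), so $K$ is totally complex and $N_{K/\Q}(s) = |N_{K/\Q}(s)| > 0$ for $s\neq 0$. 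Assembling: $\#A[s] = \deg(s) = \det(s\mid V_\ell A) = N_{K/\Q}(s)$, as claimed.
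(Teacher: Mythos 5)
Your argument is correct, but it takes a different route from the paper. The paper reduces to the simple case: it decomposes $A \sim B_1\times\cdots\times B_r$ into pairwise non-isogenous simple factors, uses Waterhouse to replace each $B_i$ by one with maximal endomorphism ring, quotes Milne's result that $\deg(s_i)=N_{K_i/\Q}(s_i)$ for a simple abelian variety, and then multiplies degrees and norms over the factors (using separability to get $\#A[s]=\deg(s)$). You instead work directly with one $\ell$-adic Tate module: squarefreeness gives $\dim_\Q K = 2g$ and, since $K\otimes_\Q\Q_\ell$ is a product of fields acting faithfully on the $2g$-dimensional space $V_\ell A$, that $V_\ell A$ is free of rank one over $K\otimes_\Q\Q_\ell$; hence the characteristic polynomial of $s$ on $V_\ell A$ is that of multiplication by $s$ on $K$, and the standard formula $\deg(s)=\det(s\mid V_\ell A)$ finishes the proof. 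Your approach is self-contained modulo that classical determinant formula and avoids both the decomposition into simple factors and the appeal to the simple case, at the cost of invoking the Tate-module machinery; the paper's proof is shorter because it outsources the essential content to the cited result for simple varieties. The ``rank one'' observation is exactly where squarefreeness enters in your version, and it is sound.

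One caution: your parenthetical variant via $\deg(s)=\prod_\ell \#(T_\ell A/sT_\ell A)=\prod_\ell |\det(s\mid T_\ell A)|_\ell^{-1}$ is shaky at $\ell=p$, where $T_pA$ has $\Z_p$-rank equal to the $p$-rank of $A$ rather than $2g$, so $|\det(s\mid T_pA)|_p^{-1}$ is not obviously the $p$-part of $N_{K/\Q}(s)$ (it is for separable $s$, but that requires an argument about the connected--\'etale decomposition). Since your final assembly uses only the single-prime identity $\deg(s)=\det(s\mid V_\ell A)$ for one $\ell\neq p$, this does not affect the validity of the proof; also note that, as in the paper, ``separable endomorphism'' should implicitly mean a separable isogeny, since otherwise $A[s]$ is infinite and $N_{K/\Q}(s)=0$.
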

\begin{proof} 
  Write~$A \sim B_1\times \ldots \times B_r$ as the product of simple pairwise non-isogenous varieties. 
  Then~$K = \End(A)\otimes_\Z\Q \cong \prod_{i = 1}^r K_i$ is the product of the number fields~$K_i = \End(B_i)\otimes_\Z\Q$.
  Without loss of generality, by \cite[Thm.~3.13]{Wat69}, we can choose each~$B_i$ so that~$\End(B_i)$ is the maximal order~$\cO_{K_i}$ in~$K_i$. 
  Since~$\End(A) \subseteq \prod_{i = 1}^r\mathcal{O}_{K_i}$, we can write~$s=(s_1,\ldots, s_r)$ for~$s_i\in\cO_{K_i}$.
  It is therefore enough to consider the case where~$A = B_1 \times \ldots \times B_r$.
  When~$A$ is simple, i.e., when~$r=1$, the statement follows from \cite[Prop~12.12]{Milne86}. Therefore, by definition, 
 ~$$
    N_{K/\Q}(s) 
  = \prod_{i = 1}^r N_{K_i/\Q}(s_i)
  =\prod_{i = 1}^r \deg(s_i)
  =\deg(s),
 ~$$ 
  where the final equality follows from the fact that $\deg(s) = \#A[s]$ and~$\deg(s_i) = \#B_i[s_i]$ by separability.
\end{proof}
Theorem \ref{thm:Gorenstein_general} and Corollary \ref{cor:Gorenstein} below were proven in the case where~$A$ is an elliptic curve by Lenstra \cite{Lenstra96}, and generalized to the case of simple abelian varieties in~\cite{Springer21}.
To obtain the same result for square-free varieties, we follow the proof method used in the simple case.

\begin{thm}\label{thm:Gorenstein_general}
Let~$A$ be a squarefree abelian variety over~$\F_q$ and let~$s$ be a separable endomorphism of~$A$.
If~$\End(A)$ is Gorenstein at the primes containing~$s$, then 
$$
	A[s] \cong \frac{\End(A)}{s\cdot\End(A)}
$$ 
is an isomorphism of~$\End(A)$-modules. 
\end{thm}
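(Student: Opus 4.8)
The plan is to reduce the theorem to a local statement and then apply the Gorenstein classification of fractional ideals from Proposition~\ref{prop:Gor_at_p}. Write $S = \End(A)$ and $K = S \otimes_\Z \Q$. The first step is to interpret $A[s]$ as a quotient of fractional $S$-ideals. Since $s$ acts as an element of $S \subseteq K$ (which is not a zero-divisor in $K$ because $s$ is an isogeny, hence $N_{K/\Q}(s) \neq 0$ by Proposition~\ref{prop:norm_deg}), multiplication by $s$ on $A$ fits into an exact sequence; following the method of Lenstra and of \cite{Springer21}, one shows that there is a fractional $S$-ideal $I$ — arising from the Tate-module/Dieudonn\'e-module description of $A$, or more directly from $A(\bar\F_q)_{\mathrm{tors}}$ viewed as a cofinite $S$-module — such that $A[s] \cong I/sI$ as $S$-modules. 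The point is that $A[s]$ is the $s$-torsion of a module which, locally at each prime, looks like a quotient $K/L$ for some lattice $L$ with $(L:L)$ related to $S$; the delicate part is bookkeeping the multiplicator ring.

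Next, I would pass to local information. By Lemma~\ref{lem:dir_sum_loc}, the finite $S$-module $A[s] \cong I/sI$ decomposes as $\bigoplus_\p (I/sI)_\p$ over the finitely many primes $\p$ containing $s$, and likewise $S/sS \cong \bigoplus_\p (S/sS)_\p$. So it suffices to produce $S_\p$-linear isomorphisms $(I/sI)_\p \cong (S/sS)_\p$ for each such $\p$, or equivalently, by the reasoning in Lemma~\ref{lemma:loc_glob_fin_mod}, an $S_\p$-linear isomorphism $I_\p \cong S_\p$ for each prime $\p \supseteq (s)$. Here the hypothesis enters: $S$ is Gorenstein at every prime containing $s$. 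Provided one knows $(I:I)_\p = S_\p$ at these primes — which should follow because $\End(A)$ is, by construction, exactly the multiplicator ring of the relevant ideal, a standard fact in this circle of ideas — Proposition~\ref{prop:Gor_at_p} immediately gives that $I_\p$ is principal, i.e. $I_\p \cong S_\p$. Then Lemma~\ref{lemma:loc_glob_fin_mod} (with $r = s$) assembles these local isomorphisms into the desired $S$-linear isomorphism $A[s] \cong S/sS$.

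The main obstacle, and the step requiring the most care, is the first one: rigorously establishing the $S$-module isomorphism $A[s] \cong I/sI$ for an appropriate fractional $S$-ideal $I$ with the correct multiplicator ring. In the elliptic-curve case this is \cite[proof of Thm.~1]{Lenstra96} and in the simple case it is carried out in \cite{Springer21}; for a squarefree $A$ one writes $A \sim B_1 \times \cdots \times B_r$ and works component-wise over the product algebra $K = \prod K_i$, using Proposition~\ref{prop:norm_deg} to control degrees and separability to ensure $\#A[s] = N_{K/\Q}(s) = \#(S/sS)$ (the latter equality being the standard index computation for lattices). Matching cardinalities is a useful sanity check but the structural isomorphism needs the module-theoretic input. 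Once $A[s] \cong I/sI$ is in hand, the rest is the purely algebraic argument above, and the Gorenstein hypothesis does exactly the work of forcing local principality. I expect the write-up to spend most of its length on the identification of $A[s]$ with a quotient of ideals, and then invoke Lemma~\ref{lem:dir_sum_loc}, Proposition~\ref{prop:Gor_at_p}, and Lemma~\ref{lemma:loc_glob_fin_mod} in quick succession.
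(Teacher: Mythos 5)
Your reduction to local principality via Proposition~\ref{prop:Gor_at_p}, Lemma~\ref{lem:dir_sum_loc} and Lemma~\ref{lemma:loc_glob_fin_mod} is fine as algebra, but the step you flag as delicate is in fact a genuine gap, and it is not the ``method of Lenstra and Springer'': at this level of generality there is no known identification of~$A[s]$ with~$I/sI$ for a global fractional~$\End(A)$-ideal~$I\subset K$ with multiplicator ring~$\End(A)$. That identification is exactly what the Deligne and Centeleghe--Stix equivalences provide (Theorem~\ref{thm:cat_eq}), and those require the hypotheses {\bf Ord} or {\bf CS}, which the present theorem does not assume --- indeed, if such an ideal-theoretic description held unconditionally, Proposition~\ref{prop:A_cmtype2_gp} would not need those extra hypotheses. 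Your fallback sources for~$I$ do not close the gap either: $A(\bar\F_q)$ is not of the form~$K/I$ globally, because at the prime~$p$ only the \'etale part of the~$p$-divisible group contributes; Tate modules give the desired local picture at~$\ell\neq p$ (with multiplicator ring~$\End(A)\otimes\Z_\ell$ by Tate's theorem), but at~$p$ one would need a separate Dieudonn\'e-theoretic argument using separability of~$s$ to see that everything is concentrated on the \'etale factors, together with the multiplicator-ring statement there. None of this is supplied, and the papers you cite do not supply it: Lenstra's and Springer's arguments are of a different nature.

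The paper's proof avoids fractional ideals and localization altogether and is the direct generalization of \cite{Lenstra96} and \cite{Springer21}: set~$S=\End(A)$, $S_0=S/sS$, $M=A[s]$. The universal property of the quotient isogeny shows that any~$r\in S$ killing~$A[s]$ factors as~$r=ts$, so~$M$ is a \emph{faithful}~$S_0$-module. Since~$S$ is Gorenstein at the primes containing~$s$ and~$s$ is a non-zero-divisor, $S_0$ is a finite Gorenstein ring (\cite[Ex.~18.1]{Matsumura86}), and a faithful module over such a ring contains a free rank-one submodule~$N$ by \cite[Lem.~2.3]{Springer21}. Finally, Proposition~\ref{prop:norm_deg} and separability give~$\#M=\deg s=N_{K/\Q}(s)=\#S_0$, forcing~$M=N\cong S/sS$ as~$S$-modules. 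So the Gorenstein hypothesis is used through the structure of the finite quotient ring~$S_0$, not through a classification of ideals with prescribed multiplicator ring; if you want to salvage your route, you must either add the {\bf Ord}/{\bf CS} hypotheses or carry out the local Tate/Dieudonn\'e analysis at~$p$ in detail.
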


\begin{proof} 
  Write~$ S = \End(A)$, ~$S_0 = S/Ss$ and~$M = A[s]$. 
  By the universal property of quotients, if~$rA[s] = 0$, then~$r = ts$ for some~$t\in S$. 
  This implies that~$M$ is a faithful~$S_0$-module. 

  Moreover, because~$S$ is Gorenstein at every prime ideal containing~$s$ and~$s$ is a non-zero-divisor, we deduce that~$S_0$ is a finite Gorenstein ring by \cite[Ex.~18.1]{Matsumura86}.
  Therefore,~$M$ contains  a free~$S_0$-submodule~$N$ of rank 1 by \cite[Lem.~2.3]{Springer21}.
  By Proposition~\ref{prop:norm_deg}, and the fact that the modules are finite, we have
  \begin{align*}
    \#M = \deg s 
      = N_{K/\Q}(s) 
      = \#(S/Ss) 
      = \#S_0.
  \end{align*} 
  We deduce that~$M = N \cong S/Ss$, as desired.
\end{proof}

\begin{cor}\label{cor:Gorenstein}
  Let~$A$ be a squarefree abelian variety over~$\F_q$ and let~$\pi$ be the Frobenius endomorphism of~$A$. 
  If~$n\geq 1$ and~$\End(A)$ is Gorenstein at the prime ideals containing~$1-\pi^n$, then there is an isomorphism of~$\End(A)$-modules 
 ~$$
    A(\F_{q^n}) \cong 
    \frac{\End(A)}{(1-\pi^n)\End(A)}.
 ~$$ 
  In particular,~$B(\F_{q^n}) \cong A(\F_{q^n})$ are isomorphic groups for all abelian varieties~$B$ which are isogenous to~$A$ with~$\End(B) = \End(A)$.
\end{cor}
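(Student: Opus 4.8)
The plan is to deduce the corollary directly from Theorem~\ref{thm:Gorenstein_general} by taking the separable endomorphism $s = 1 - \pi^n$. First I would check that $1 - \pi^n$ is indeed a separable endomorphism of $A$: since $\cI_h$ is squarefree, the characteristic polynomial $h$ has no repeated roots, so in particular $\pi$ does not satisfy any equation forcing $1 - \pi^n$ to be inseparable; more concretely, $1-\pi^n$ is invertible in $K = \End(A)\otimes_\Z\Q$ unless $\pi^n = 1$, which is impossible for a Weil number of a positive-dimensional variety, and an endomorphism which is an isogeny (nonzero-divisor) and whose kernel has order prime to $p$ — or more carefully, one should invoke that $A[1-\pi^n]$ computes the $\F_{q^n}$-rational points, which is the standard fact that $A(\F_{q^n}) = \ker(1 - \pi^n \colon A \to A)(\bar\F_q)$ and that this kernel is étale because $1-\pi^n$ acts invertibly on the tangent space (the differential of Frobenius is zero, so $d(1-\pi^n) = 1$). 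This last observation is exactly what gives separability of $s = 1-\pi^n$.

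With separability established, the identification $A(\F_{q^n}) = A[1-\pi^n]$ as $\End(A)$-modules is immediate from the definition of $A[s]$ in the excerpt together with the classical description of rational points as the kernel of $1 - \mathrm{Frob}^n$. Then, since by hypothesis $\End(A)$ is Gorenstein at every prime containing $1-\pi^n$, Theorem~\ref{thm:Gorenstein_general} applies verbatim with $s = 1-\pi^n$ and yields the $\End(A)$-module isomorphism
\[
  A(\F_{q^n}) \cong \frac{\End(A)}{(1-\pi^n)\End(A)}.
\]

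For the final sentence, suppose $B$ is isogenous to $A$ with $\End(B) = \End(A) =: S$. The Frobenius of $B$ is identified with the same element $\pi \in K$ under the conventions fixed in the introduction (the isomorphism $\End(B)\otimes\Q \cong K$ sends Frobenius to $\pi$), so $B$ is Gorenstein at the primes containing $1-\pi^n$ precisely because $\End(B) = S$ is. Applying the first part of the corollary to $B$ gives $B(\F_{q^n}) \cong S/(1-\pi^n)S \cong A(\F_{q^n})$ as $S$-modules, hence in particular as abelian groups.

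I do not expect a serious obstacle here: the content is entirely in Theorem~\ref{thm:Gorenstein_general} and Proposition~\ref{prop:norm_deg}, both already available. The only point requiring a little care is the justification that $1-\pi^n$ is separable and that its kernel on $\bar\F_q$-points is exactly $A(\F_{q^n})$; this is standard (the differential of $1-\pi^n$ is the identity since $d\pi = 0$ on a variety over $\F_q$), so it should be dispatched in a sentence.
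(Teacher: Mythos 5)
Your proposal is correct and follows exactly the paper's route: the paper's proof of Corollary~\ref{cor:Gorenstein} is precisely the observation that $A(\F_{q^n}) = A[1-\pi^n]$ with $1-\pi^n$ a separable isogeny, so Theorem~\ref{thm:Gorenstein_general} applies; your additional justification of separability via the vanishing differential of Frobenius and your handling of the final claim about $B$ are just fuller versions of the same argument.
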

\begin{proof}
  We may apply Theorem \ref{thm:Gorenstein_general} because~$A(\F_{q^n}) = A[1-\pi^n]$ and~$1-\pi^n$ is a separable isogeny.
\end{proof}
However, note that these results are not true in general when we remove the assumption that~$\End(A)$ is Gorenstein, see Example~\ref{ex:diff_grps_same_end}.

\section{Cyclic isogeny classes}
\label{sec:coprime_conductor}
In this section we study isogeny classes containing only abelian varieties with cyclic groups of rational points.
\begin{df} 
  We say that an isogeny class is \emph{cyclic} if every variety~$A$ in the isogeny class has a cyclic group~$A(\F_q)$ of rational~$\F_q$-points.
\end{df}
 Any isogeny class of abelian varietes with a single point is trivially cyclic.
 In Section \ref{sec:red_sqfree}, 
 we show that, except for such trivial factors, every cyclic isogeny class is squarefree.
Then, in Section \ref{sec:sqfree_cyclic}, we provide a characterization of precisely which squarefree isogeny classes are cyclic in terms of conductor ideals.

\subsection{Reducing to the squarefree case}
\label{sec:red_sqfree}
In this subsection, we consider abelian varieties whose endomorphism algebras are non necessarily commutative.
In general,  if~$B$ is an abelian variety over~$\F_q$, then~$\End(B)$ is an order in the endomorphism algebra~$K = \End(B)\otimes_\Z\Q$, that is a subring which is finitely generated free module over~$\Z$, and whose~$\Q$-span is the whole algebra~$K$. 
Observe that this notion of order specializes to the one introduced in Section~\ref{sec:frac_ideals} in the commutative case.

\begin{prop} 
  \label{prop:simple_cyclic_sqfree}
  If~$B$ is a simple abelian variety over~$\F_q$ such that~$B(\F_q)$ is a nontrivial cyclic group and~$\End(B)$ is a maximal order in~$\End(B)\otimes_\Z \Q$, then~$\End(B)$ is commutative, that is,~$B$ is squarefree.
\end{prop}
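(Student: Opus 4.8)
The plan is to argue by contradiction: suppose $B$ is simple with $B(\F_q)$ nontrivial cyclic and $\End(B)$ a maximal order, but $\End(B)$ is \emph{not} commutative. Since $B$ is simple, $K = \End(B)\otimes_\Z\Q$ is a division algebra, and $\End(B)$ being a maximal order means $\End(B)$ is a maximal order in this (possibly noncommutative) division algebra. The key structural input is that $K$ contains a maximal commutative subfield $L$ over which $[K:L] = d^2$ where $d = \sqrt{[K:\Q]}/[\text{center}:\Q]$ is the capacity; the noncommutativity hypothesis means $K \neq L$, i.e. $d \geq 2$.

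First I would set up the module-theoretic framework: $B(\F_q) = B[1-\pi]$, which I want to view as a module over $S = \End(B)$ killed by $s = 1-\pi$. Writing $S_0 = S/Ss$, the same universal-property argument as in the proof of Theorem~\ref{thm:Gorenstein_general} shows $B(\F_q)$ is a faithful $S_0$-module. Since $S$ is a maximal order, it is hereditary, hence (by the noncommutative analogue of the Gorenstein property for maximal orders) $S_0 = S/Ss$ is a quasi-Frobenius ring, so every faithful $S_0$-module contains a copy of $S_0$ as a submodule — this is the noncommutative analogue of \cite[Lem.~2.3]{Springer21}. Then I would compare cardinalities: by the degree/norm computation (the noncommutative analogue of Proposition~\ref{prop:norm_deg}, using that $\deg s = \operatorname{Nrd}(s)^{?}$ or more simply $\#B[s] = \deg(s) = \#(S/Ss)$ since $S$ is maximal, cf.\ \cite[Prop.~12.12]{Milne86} together with hereditariness), we get $\#B(\F_q) = \#S_0$, so $B(\F_q) \cong S_0$ as $S_0$-modules, hence also as abelian groups.

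The heart of the argument is then: if $S$ is a maximal order in a noncommutative division algebra $K$ and $s = 1-\pi$ is a nonzero nonunit (which it is, since $B(\F_q)$ is nontrivial), then $S/Ss$ cannot be cyclic as an abelian group. For this I would localize and complete at a prime $\p$ of the center $Z$ of $S$ lying below a prime dividing $s$: the completion $\hat S_\p$ is a maximal order in the central division algebra $\hat K_\p$ over the local field $\hat Z_\p$, hence $\hat S_\p$ is the unique maximal order, with a unique two-sided maximal ideal $\mathfrak{P}$, and $\hat S_\p/\mathfrak{P}$ is a finite field $k$ with $[\,k : Z/\p\,] = d \geq 2$. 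Since $s$ lies in $\mathfrak{P}$, the quotient $\hat S_\p/\hat S_\p s$ surjects onto $\hat S_\p/\mathfrak{P} = k$, which already requires at least $d \geq 2$ generators over $Z/\p \subseteq \Z/\ell$ (where $\ell$ is the rational prime below $\p$), so $B(\F_q)$ has a quotient isomorphic to $(\Z/\ell)^d$ with $d\geq 2$ and therefore is not cyclic — contradiction. Hence $K = L$ is a field.

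The main obstacle I anticipate is making the noncommutative analogues of Proposition~\ref{prop:norm_deg} and of \cite[Lem.~2.3]{Springer21} precise and correctly cited: specifically, verifying that for a maximal order $S$ in a division algebra and a non-zero-divisor $s$ one has $\#B[s] = \#(S/Ss)$ (here $S/Ss$ should be understood as a left module, and one must be slightly careful that $\deg(s)$ as an isogeny equals the reduced-norm-type quantity counting this quotient), and that $S/Ss$ is quasi-Frobenius so that faithful modules contain a free rank-one submodule. Both follow from the standard theory of maximal orders (e.g.\ Reiner, \emph{Maximal Orders}), since maximal orders are hereditary and their proper quotients by non-zero-divisors are self-injective; but the bookkeeping relating the geometric degree to the algebraic index requires care. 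An alternative, cleaner route that avoids the cardinality comparison entirely: directly show $\#B(\F_q) = \deg(1-\pi)$ is divisible by the square of the residue characteristic contribution coming from the capacity $d$, using that $1-\pi$ lies in some two-sided maximal ideal $\mathfrak{P}$ with $\#(S/\mathfrak{P})$ a proper prime power $\ell^d$; then $\ell^d \mid \#B(\F_q)$ forces non-cyclicity once $d\geq 2$, provided one knows $1-\pi$ is non-invertible, which holds precisely because $B(\F_q)\neq 0$.
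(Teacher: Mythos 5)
Your overall strategy -- assume $\End(B)$ is noncommutative and derive that $B(\F_q)$ cannot be nontrivial cyclic -- is the same contradiction the paper exploits, but the paper gets there in one step by citing \cite[Thm.~1.3.(b)]{Springer21}: for simple $B$ with maximal endomorphism ring one has $B(\F_q)\cong(\cO_L/(1-\pi)\cO_L)^{d}$ as $\cO_L$-modules, where $L=\Q(\pi)$ is the center and $d=2\dim(B)/[L:\Q]$, so a nontrivial cyclic group forces $d=1$, which by Milne--Waterhouse means $\End(B)\otimes_\Z\Q=L$ is commutative. Your execution of the contradiction has genuine gaps. First, the cardinality identity $\#B[s]=\#(S/Ss)$ fails precisely in the noncommutative case you are trying to rule out: writing $[K:L]=d^2$, the degree of the central isogeny $1-\pi$ is $h_B(1)=\pm N_{L/\Q}(1-\pi)^{d}$, whereas $\#\bigl(S/S(1-\pi)\bigr)=|N_{K/\Q}(1-\pi)|=|N_{L/\Q}(1-\pi)|^{d^2}$. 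So for $d\geq 2$ the group $B(\F_q)$ is strictly smaller than $S_0=S/S(1-\pi)$, and your intermediate conclusion $B(\F_q)\cong S_0$ is simply false; the true structure is the $d$-th power of the quotient of the center (``one row'' of the maximal order), which is exactly what the cited theorem provides. Second, the proposed noncommutative analogue of \cite[Lem.~2.3]{Springer21} is also false: over a quasi-Frobenius (indeed semisimple) ring such as $M_2(\F_\ell)$, the column module $\F_\ell^{2}$ is faithful but contains no copy of the regular module. This is not a peripheral worry, because the completion of a global division algebra at most primes is a split matrix algebra, so your claim that $\hat K_\p$ is a division algebra with a unique maximal order whose radical quotient is a field is wrong in general (the radical quotient $M_n(k')$ is still non-cyclic as an abelian group, so that local statement could be patched, but the bridge from it back to $B(\F_q)$ runs through the two false claims above).

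Your ``alternative, cleaner route'' does not close the gap either: knowing $\ell^{d}\mid \#B(\F_q)$ does not force non-cyclicity, since $\Z/\ell^{d}\Z$ is cyclic; one needs $\ell$-rank at least $2$, i.e., genuine module-theoretic information such as a surjection of $B(\F_q)$ onto $S/\mathfrak{P}$ as groups, which you have not established. If you want an argument in your spirit that avoids quoting the structure theorem, a workable repair is: faithfulness of $B(\F_q)$ over $S_0$ together with cyclicity would embed $S_0$ into $\End_\Z(\Z/N\Z)\cong\Z/N\Z$, making $S_0$ cyclic as an abelian group, while $S_0$ surjects onto the radical quotient of $\hat S_\p$ at a prime over $1-\pi$, which is a matrix algebra over a finite field of $\F_\ell$-dimension at least $2$ -- a contradiction. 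But as written, the proposal does not prove the proposition.
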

\begin{proof} 
  Let~$L$ be the center of the endomorphism algebra~$\End(B)\otimes_\Z \Q$. 
  Because~$B$ is simple,~$L = \Q(\pi)$ where~$\pi$ is a root in~$L$ of the polynomial~$h_B(x)$; see \cite[Thm.~8]{MilWat71}.
  The center of~$\End(B)$ is~$\cO_L$ by the maximality of~$\End(B)$. 
  By \cite[Thm.~1.3.(b)]{Springer21}, there is an isomorphism of~$\cO_L$-modules
 ~$$
    B(\F_q) \cong \left(\frac{\cO_L}{(1-\pi)\cO_L}\right)^{d},
 ~$$
  where~$d = 2\dim(B)/[L : \Q]$. 
  In particular, because the group of points is nontrivial and cyclic by hypothesis, we must have~$d = 1$, which is equivalent to $\cO_L = \End(B)$ by \cite[Thm.~8]{MilWat71}.
\end{proof}

\begin{thm} 
  \label{thm:general_cyclic_sqfree}
  If~$A$ is an abelian variety over~$\F_q$ whose isogeny class is cyclic, then~$A \sim  A_1 \times A_{\text{sf}}$ for abelian varieties~$A_1$ and~$A_{\text{sf}}$ over~$\F_q$, possibly of dimension~$0$, such that~$A_{\text{sf}}$ is squarefree and~$\#A_1(\F_q) = 1$. If~$q \geq 5$, then~$A$ itself is squarefree.
\end{thm}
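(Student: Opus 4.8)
The plan is to combine the Honda--Tate decomposition of the isogeny class with Proposition~\ref{prop:simple_cyclic_sqfree}. Write $A \sim B_1^{m_1}\times\cdots\times B_r^{m_r}$ with the $B_i$ simple and pairwise non-isogenous. For each $i$ the characteristic polynomial of the Frobenius of $B_i$ is $h_{B_i} = m_{B_i}^{e_i}$, where $m_{B_i}$ is the irreducible minimal polynomial of Frobenius and $e_i^2 = [\End(B_i)\otimes_\Z\Q : \Q(\pi_i)]$, the $m_{B_i}$ being pairwise distinct. Hence $h_A = \prod_i m_{B_i}^{e_i m_i}$, so $A$ is squarefree precisely when $m_i = e_i = 1$ for every $i$. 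I would first dispose of the trivial case $\dim A = 0$ (where $A$ is a point and already squarefree), and otherwise assume every $B_i$ is positive-dimensional.

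The core of the argument is the claim that if $(m_i,e_i)\neq(1,1)$ for some $i$, then $\#B_i(\F_q) = 1$. To prove it I would argue by contradiction: suppose $\#B_i(\F_q) > 1$ and fix a prime $\ell$ dividing $\#B_i(\F_q)$. If $m_i\geq 2$, then the abelian variety $\prod_j B_j^{m_j}$ lies in the isogeny class of $A$, and its group of rational points $\prod_j B_j(\F_q)^{m_j}$ contains a copy of $(\Z/\ell\Z)^2$, hence is not cyclic, contradicting cyclicity of the isogeny class. If instead $m_i = 1$ and $e_i\geq 2$, I would replace $B_i$ by an isogenous variety $B_i'$ whose endomorphism ring is a maximal order in $\End(B_i)\otimes_\Z\Q$, which exists by \cite[Thm.~3.13]{Wat69}; then $B_i' \times \prod_{j\neq i}B_j^{m_j}$ again lies in the isogeny class of $A$, so the nontrivial group $B_i'(\F_q)$ is cyclic, and Proposition~\ref{prop:simple_cyclic_sqfree} forces $B_i'$, and therefore $B_i$, to be squarefree, i.e.\ $e_i = 1$ --- a contradiction. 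This establishes the claim.

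Granting the claim, I would set $A_{\text{sf}} = \prod_{\{i\,:\,m_i=e_i=1\}}B_i$ and $A_1 = \prod_{\{i\,:\,(m_i,e_i)\neq(1,1)\}}B_i^{m_i}$, so that $A\sim A_1\times A_{\text{sf}}$. The polynomial $h_{A_{\text{sf}}}$ is a product of distinct irreducible polynomials, so $A_{\text{sf}}$ is squarefree, and $\#A_1(\F_q) = \prod_i\#B_i(\F_q)^{m_i} = 1$ by the claim. For the final assertion, when $q\geq 5$ the Weil bound gives $\#B(\F_q)\geq(\sqrt{q}-1)^{2\dim B} > 1$ for every positive-dimensional simple abelian variety $B$ over $\F_q$; hence every $B_i$ has more than one rational point, so the claim forces $m_i = e_i = 1$ for all $i$, and $A$ is squarefree.

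The step I expect to be the main obstacle is the subcase $m_i = 1$, $e_i\geq 2$, which hinges on the fact that each simple isogeny class contains an abelian variety with maximal endomorphism ring so that Proposition~\ref{prop:simple_cyclic_sqfree} can be invoked; the remaining steps amount to elementary group theory together with the Weil estimate.
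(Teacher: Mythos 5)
Your proposal is correct and follows essentially the same route as the paper: decompose into simple factors, use cyclicity to rule out multiplicities on factors with more than one point, invoke \cite[Thm.~3.13]{Wat69} to pass to a maximal endomorphism ring so that Proposition~\ref{prop:simple_cyclic_sqfree} applies, and finish with the Weil bound for $q\geq 5$. The only (harmless) difference is bookkeeping: you sort factors by $(m_i,e_i)=(1,1)$ versus not, while the paper sorts them by whether $\#B_j(\F_q)=1$, and both partitions satisfy the statement.
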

\begin{proof}
  Decompose the isogeny class ~$A\sim B_1^{e_1}\times \dots\times B_r^{e_r}$ into distinct simple factors.
  By \cite[Theorem 3.13]{Wat69}, we may assume that each~$\End(B_j)$ is a maximal order in its endomorphism algebra. 

  After possibly reordering, let~$r_1$ be such that~$\#B_j(\F_q) = 1$ for all~$1\leq j \leq r_1$ and~$\#B_j(\F_q) > 1$ for all~$r_1 < j \leq r$.
  Define 
 ~$$
    A_1 = \prod_{1 \leq j \leq r_1}B_j^{e_j} \text{ and } A_{\text{sf}} = \prod_{r_1 < j \leq r}B_j^{e_j}.
 ~$$
  It suffices to show~$A_{\text{sf}}$ is square-free. In this case,~$e_j  =1$ for all~$r_1< j \leq r$ because the isogeny class is cyclic, and Proposition \ref{prop:simple_cyclic_sqfree}
  shows that
 ~$$\End(A_{\text{sf}}) \cong  \prod_{r_1 < j \leq r }\End(B_j)$$
  is commutative, that is,~$A_{\text{sf}}$ is squarefree.
  The theorem then follows because the Weil bound states~$\#B(\F_q) \geq (\sqrt q - 1)^{2\dim(B)}$ for any abelian variety~$B$ over~$\F_q$; see \cite{Weil48}.
  Thus~$\#B(\F_q) = 1$  implies~$q \leq 4$.  
\end{proof}
  
\begin{remark} 
  \label{rem:one_point}
  There are infinitely many simple abelian varieties over~$\F_2$ with a single rational point by \cite{MP77}; see also \cite{Ked21}.
  However, there is only one simple isogeny class over each of~$\F_3$ and~$\F_4$ with a single rational point; see \cite[Theorem 3.2]{Kadets21} and LMFDB isogeny classes 
  \href{http://www.lmfdb.org/Variety/Abelian/Fq/1/3/ad}{1.3.ad}
  and
 \href{http://www.lmfdb.org/Variety/Abelian/Fq/1/4/ae}{1.4.ae}
  \cite{LMFDB}. 
\end{remark}

\subsection{The cyclicity of squarefree abelian varieties}
\label{sec:sqfree_cyclic}
We will use the same notation as in the previous sections.
Let~$\cI_h$ be a squarefree isogeny class of abelian vareities over~$\F_q$.
Put~$K=\Q[x]/(h)=\Q[\pi]$ and~$R=\Z[\pi,\bar{\pi}]$. 

\begin{thm}\label{thm:coprime_iff_cyclic}
  The isogeny class~$\cI_h$ is cyclic if and only if~$(1-\pi)R$ is coprime to the conductor~$\frf = (\cO_K : R)$, that is,~$(1-\pi)R + \frf = R$.
\end{thm}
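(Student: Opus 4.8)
The plan is to prove the two directions separately, using the machinery from Section~\ref{sec:frac_ideals} and Corollary~\ref{cor:Gorenstein}. Throughout, recall that for any $A \in \cI_h$ one has $A(\F_q) = A[1-\pi]$, that $R = \Z[\pi,\bar\pi] \subseteq \End(A) \subseteq \cO_K$, and that $1-\pi$ is a separable isogeny.

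For the "only if" direction, I would prove the contrapositive: if $(1-\pi)R$ is \emph{not} coprime to $\frf = (R : \cO_K)$, then $\cI_h$ is not cyclic. Here I would choose the abelian variety $A \in \cI_h$ with $\End(A) = \cO_K$, which exists by \cite[Thm.~3.13]{Wat69}. Since $\cO_K$ is the maximal order it is Gorenstein, so Corollary~\ref{cor:Gorenstein} gives $A(\F_q) \cong \cO_K/(1-\pi)\cO_K$ as $\cO_K$-modules. Now apply Proposition~\ref{prop:converse_O_K} with $S = R$, $S' = \cO_K$, and $r = 1-\pi$: since $1-\pi$ is a non-zero-divisor and $(1-\pi)R$ is not coprime to $\frf$, the quotient $\cO_K/(1-\pi)\cO_K$ is not a cyclic $R$-module, hence a fortiori not a cyclic group (being non-cyclic as a module over the larger ring $R \subseteq \cO_K$ already forces at least two generators as an abelian group — more carefully, one argues at the offending prime $\p \supseteq (1-\pi)R + \frf$ that $\cO_K/\p\cO_K$ has $\F_p$-dimension at least $2$, as in the proof of Proposition~\ref{prop:converse_O_K}). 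Therefore $A(\F_q)$ is non-cyclic and $\cI_h$ is not cyclic.

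For the "if" direction, assume $(1-\pi)R$ is coprime to $\frf = (\cO_K : R)$; I want to show every $A \in \cI_h$ has cyclic $A(\F_q)$. Fix such an $A$ and set $S = \End(A)$, so $R \subseteq S \subseteq \cO_K$. The key observation is that coprimality of $(1-\pi)R$ with $(\cO_K : R)$ forces, for every prime $\p$ of $R$ containing $1-\pi$, the equality $R_\p = \cO_{K,\p}$ by Lemma~\ref{lemma:rel_cond_same_order}; since $R_\p \subseteq S_\p \subseteq \cO_{K,\p}$, this gives $S_\p = \cO_{K,\p}$ at every prime of $S$ above $1-\pi$ as well (one checks that primes of $S$ over $1-\pi$ contract to primes of $R$ over $1-\pi$, and that $S$ and $R$ have the same localization there). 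In particular $S$ is Gorenstein — indeed maximal — at all primes containing $1-\pi$, so Corollary~\ref{cor:Gorenstein} applies and $A(\F_q) \cong S/(1-\pi)S$ as $S$-modules. Then Proposition~\ref{prop:coprime_cond}, applied with $r = 1-\pi$ and $I = S$ (noting $(1-\pi)S$ is coprime to $(S : \cO_K)$ since it is already coprime to the smaller-or-equal conductor ideal behavior — more precisely, any prime of $S$ over $1-\pi$ has $S_\p = \cO_{K,\p}$, so does not contain $(S:\cO_K)$), yields $S/(1-\pi)S \cong \cO_K/(1-\pi)\cO_K$. Finally, $\cO_K/(1-\pi)\cO_K$ is cyclic as a group because $\cO_K$ is a product of maximal orders in number fields and each $\cO_{K_i}/(1-\pi_i)\cO_{K_i}$ is cyclic (a quotient of a Dedekind domain by a nonzero ideal is a product of $\Z/\ell^k\Z$'s, i.e.\ cyclic, by CRT and the principal-at-$\p$ structure). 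Hence $A(\F_q)$ is cyclic.

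The main obstacle I anticipate is the bookkeeping around passing between primes of $R$, of $S = \End(A)$, and of $\cO_K$, and ensuring that "coprime to the conductor $(\cO_K : R)$" transfers correctly to the coprimality hypotheses needed to invoke Corollary~\ref{cor:Gorenstein} and Proposition~\ref{prop:coprime_cond} for the possibly-intermediate order $S$; the cleanest route is probably to do everything at a single fixed prime via localization/completion and Lemma~\ref{lemma:rel_cond_same_order}, rather than juggling conductor ideals globally. A secondary subtlety is the final reduction from "non-cyclic $R$-module" to "non-cyclic abelian group" in the forward direction, which is not automatic in general but does follow here because the obstruction is detected in the $\F_p$-dimension of a single residue-field-quotient, which is ring-independent.
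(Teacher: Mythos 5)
Your ``only if'' direction is essentially the paper's own argument: choose $A$ with $\End(A)=\cO_K$ (Waterhouse), apply Corollary~\ref{cor:Gorenstein}, and then Proposition~\ref{prop:converse_O_K} with $S=R$, $S'=\cO_K$, $r=1-\pi$; the passage from ``non-cyclic $R$-module'' to ``non-cyclic abelian group'' is fine, since a $\Z$-generator would also be an $R$-generator. Likewise, your reduction in the ``if'' direction to the statement that $\End(A)$ is locally maximal (hence Gorenstein) at every prime above $1-\pi$, so that Corollary~\ref{cor:Gorenstein} gives $A(\F_q)\cong S/(1-\pi)S$ with $S=\End(A)$, is the same route as Proposition~\ref{prop:coprime_cyclic}.

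The genuine gap is the last step of the ``if'' direction. The claim that $\cO_K/(1-\pi)\cO_K$ is cyclic because ``a quotient of a Dedekind domain by a nonzero ideal is a product of $\Z/\ell^k\Z$'s'' is false: $\Z[i]/(3)\cong\F_9\cong(\Z/3\Z)^2$ is not cyclic, and more generally $\cO_K/\mathfrak{P}^e$ fails to be cyclic whenever $\mathfrak{P}$ has residue degree $>1$, as does the CRT product when two primes of $\cO_K$ dividing $1-\pi$ lie over the same rational prime. So cyclicity of $\cO_K/(1-\pi)\cO_K$ is precisely what remains to be proved, and it does not follow from Dedekindness alone; it follows from the specific structure of $R=\Z[\pi,\bar\pi]$, which your argument never uses: modulo $1-\pi$ one has $\pi\equiv 1$ and $\bar\pi=q/\pi\equiv q$, so $R/(1-\pi)R$ is a quotient of $\Z$, in fact $\cong\Z/h(1)\Z$. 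This is the paper's final step: apply Proposition~\ref{prop:coprime_cond} with base order $R$ and fractional $R$-ideal $I=S$ (not, as you write, ``$I=S$'' over $S$ itself, which is vacuous; alternatively take $I=\cO_K$ as a fractional $S$-ideal) to get $S/(1-\pi)S\cong R/(1-\pi)R\cong \Z/h(1)\Z$. With that replacement of the last step, your proof is complete and coincides with the paper's.
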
 
This theorem is a straightforward combination of Proposition~\ref{prop:coprime_cyclic} and Corollary~\ref{cor:cyclicity_converse}. 
We remark that the following is a simple application of Theorem \ref{thm:coprime_iff_cyclic}.
\begin{cor}
    If~$h(1)$ is a square-free integer, where~$h(t)$ is the characteristic polynomial of~$\pi$,
    then~$(1-\pi)R$ is coprime with~$\frf=(R:\cO_K)$.
\end{cor}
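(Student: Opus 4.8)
The plan is to deduce the corollary directly from Theorem~\ref{thm:coprime_iff_cyclic}: it suffices to show that the isogeny class~$\cI_h$ is cyclic whenever~$N = h(1)$ is a squarefree integer. The point is that, for any abelian variety~$A\in\cI_h$, the group~$A(\F_q)$ is a finite abelian group of order~$h(1) = N$, since~$\#A(\F_q) = h_A(1) = h(1)$. But a finite abelian group whose order is squarefree is necessarily cyclic, by the structure theorem for finite abelian groups (the elementary divisors $d_1 \mid d_2 \mid \cdots \mid d_k$ must all be trivial except the last one, as otherwise $\ell^2 \mid N$ for some prime $\ell$). Hence every~$A$ in the isogeny class has cyclic group of rational points, so~$\cI_h$ is cyclic by definition, and Theorem~\ref{thm:coprime_iff_cyclic} then gives that~$(1-\pi)R$ is coprime to~$\frf = (\cO_K : R)$.

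The only step that requires any care is confirming that~$\#A(\F_q) = h(1)$ for all~$A \in \cI_h$ --- but this is exactly the classical fact~$\#A(\F_q) = h_A(1)$ recalled in the introduction, combined with the fact that all members of an isogeny class share the same characteristic polynomial~$h$. Everything else is immediate. In fact there is no real obstacle here: the statement is essentially a sanity check illustrating the reformulation of cyclicity provided by Theorem~\ref{thm:coprime_iff_cyclic}, translating the transparent group-theoretic observation ``squarefree order forces cyclic'' into the arithmetic statement about coprimality of ideals. I would simply write the two-line argument above, citing Theorem~\ref{thm:coprime_iff_cyclic} for the final implication.
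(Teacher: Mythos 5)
Your argument is correct and coincides with the paper's own proof: a squarefree order forces every group $A(\F_q)$ of order $h(1)$ to be cyclic, so the isogeny class is cyclic, and Theorem~\ref{thm:coprime_iff_cyclic} yields the coprimality of $(1-\pi)R$ with the conductor. Nothing further is needed.
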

\begin{proof}
    Because~$h(1)$ is square-free, any finite abelian group of order~$h(1)$ is cyclic. Thus, the isogeny class is cyclic, and we conclude via Theorem \ref{thm:coprime_iff_cyclic}.
\end{proof}

The next proposition proves one direction in Theorem~\ref{thm:coprime_iff_cyclic}.
\begin{prop}\label{prop:coprime_cyclic}
  If~$(1-\pi)R$ is coprime to the conductor~$\frf = (R:\cO_K)$, then, for every~$A$ in~$\cI_h$, we have an~$R$-linear isomorphism
  \[A(\F_q)\cong \frac{R}{(1-\pi)R}.\]
  In particular, the isogeny class~$\cI_h$ is cyclic.
\end{prop}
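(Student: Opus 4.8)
The plan is to reduce the statement to an application of Proposition~\ref{prop:coprime_cond} with the right choice of order and element, and then to identify the resulting quotient with the group of rational points. First I would recall the key structural fact established earlier in the introduction: for a squarefree isogeny class, the endomorphism ring $\End(A)$ of any $A\in\cI_h$ is an order $S$ in $K=\Q[x]/(h)$ containing $R=\Z[\pi,\bar\pi]$, since $\pi$ (Frobenius) and $\bar\pi=q/\pi$ (Verschiebung) are always endomorphisms. In particular $R\subseteq S\subseteq\cO_K$, so the conductor $(S:\cO_K)$ contains $\frf=(R:\cO_K)$. Hence if $(1-\pi)R$ is coprime to $\frf$, then $(1-\pi)S$ is coprime to the conductor $(S:\cO_K)$ as well.

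Next I would produce the module isomorphism for a fixed $A$ with $\End(A)=S$. Since $\cI_h$ is squarefree and $1-\pi$ is separable (its norm $h(1)$ counts the rational points and is nonzero, and separability of $1-\pi^n$ was already invoked in Corollary~\ref{cor:Gorenstein}), coprimality of $(1-\pi)S$ with $(S:\cO_K)$ means by Lemma~\ref{lemma:rel_cond_same_order} that $S_\p=\cO_{K,\p}$ for every prime $\p$ of $S$ containing $1-\pi$; in particular $S$ is the maximal order at all such primes, so it is certainly Gorenstein there. By Corollary~\ref{cor:Gorenstein} applied with $n=1$, we get an $S$-linear (hence $R$-linear) isomorphism $A(\F_q)\cong S/(1-\pi)S$. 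Now I apply Proposition~\ref{prop:coprime_cond} with the order $S$, the non-zero-divisor $r=1-\pi$, and the fractional $S$-ideal $I=R$ viewed as a sub-$S$-module of $K$ (note $R\subseteq S$ so $R$ is indeed a fractional $S$-ideal, being finitely generated and full): this gives $R/(1-\pi)R\cong S/(1-\pi)S$ as $S$-modules, and restricting scalars along $R\subseteq S$ yields the desired $R$-linear isomorphism $A(\F_q)\cong R/(1-\pi)R$.

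Finally, for the ``in particular'' clause, I would note that $R/(1-\pi)R$ is a fixed finite group independent of the choice of $A\in\cI_h$, so every abelian variety in the isogeny class has group of rational points isomorphic to this single group; since a constant group is in particular cyclic if that group is cyclic, but more to the point, $\cI_h$ is cyclic precisely when $R/(1-\pi)R$ is cyclic — and here the real content is just that all $A(\F_q)$ are isomorphic, with cyclicity then following from the separate Proposition invoked to actually check $R/(1-\pi)R$ is cyclic under the coprimality hypothesis. Concretely, since $(1-\pi)R$ is coprime to $\frf$, for each prime $\p\supseteq(1-\pi)$ we have $R_\p=\cO_{K,\p}$, which is a principal ideal ring, so $(R/(1-\pi)R)_\p$ is cyclic over $R_\p$; by Lemma~\ref{lem:dir_sum_loc} the whole quotient $R/(1-\pi)R\cong\bigoplus_\p (R/(1-\pi)R)_\p$ and, arguing prime-by-prime over the distinct rational primes below the $\p$, one checks that this direct sum is a cyclic group (the components at primes over distinct rational primes have coprime order, and at primes over a common rational prime the localization $\cO_{K,\p}/(1-\pi)$ contributes a single cyclic factor). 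The main obstacle I anticipate is precisely this last bookkeeping step: verifying that a direct sum of cyclic modules over the various maximal localizations $\cO_{K,\p}$ assembles into a cyclic abelian group. This requires care because $\cO_K$ may have several primes above a given rational prime $\ell$, but the point is that $\cO_{K,\p}/(1-\pi)^k$ is a quotient of a DVR, hence cyclic, and $1-\pi$ has $\ell$-adic valuation bounded so that at most one such $\p$ over $\ell$ can contribute — or, if several contribute, their contributions must still combine to a cyclic $\ell$-group, which is exactly the arithmetic input one needs to extract from the coprimality hypothesis.
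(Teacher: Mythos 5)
Your first half follows the paper's route exactly (coprimality ascends from $R$ to $S=\End(A)$, Lemma~\ref{lemma:rel_cond_same_order} gives $S_\p=\cO_{K,\p}$ at every prime $\p$ containing $1-\pi$, hence $S$ is Gorenstein there and $A(\F_q)\cong S/(1-\pi)S$ by Corollary~\ref{cor:Gorenstein}), but your invocation of Proposition~\ref{prop:coprime_cond} is backwards and, as stated, invalid. You take the order to be $S$ and the ideal to be $I=R$, justifying this with ``$R\subseteq S$, so $R$ is a fractional $S$-ideal.'' That is false: a fractional $S$-ideal must be an $S$-submodule of $K$, and $R$ is only a subring of $S$; one has $S\cdot R=S\neq R$ whenever $R\subsetneq S$, so $R$ is not an $S$-module and the proposition does not apply with these roles. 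The correct application (the one the paper makes) takes the order to be $R$ --- whose hypothesis, that $(1-\pi)R$ is coprime to $(R:\cO_K)$, is exactly what is given --- and the fractional $R$-ideal $I=S$, which yields the $R$-linear isomorphism $S/(1-\pi)S\cong R/(1-\pi)R$ directly. The fix is a one-line swap, but the step as you wrote it fails.

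The second gap is the ``in particular'' clause, which you acknowledge is unfinished; your sketch also contains a false claim. Cyclicity of $(R/(1-\pi)R)_\p$ as an $R_\p$-module (a quotient of the principal ideal ring $\cO_{K,\p}$) does not give cyclicity as an abelian group when the residue field of $\p$ has degree $>1$ over $\F_\ell$, and the assertion that ``at most one $\p$ over $\ell$ can contribute'' is unjustified and false in general: several primes of $\cO_K$ above the same $\ell$ may divide $1-\pi$. Moreover, no ``arithmetic input from the coprimality hypothesis'' is needed here: $R/(1-\pi)R$ is cyclic unconditionally. The paper simply notes that $R=\Z[\pi,\bar\pi]$ surjects onto $R/(1-\pi)R$ with $\pi\mapsto 1$ and $\bar\pi=q/\pi\mapsto q$, so the quotient is generated by $1$ as a $\Z$-module; concretely,
\[
\frac{R}{(1-\pi)R}\cong \frac{\Z[x,y]}{(h(1),\,x-1,\,y-q)}\cong \frac{\Z}{h(1)\Z},
\]
the order being $h(1)=N_{K/\Q}(1-\pi)$. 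Replacing your local bookkeeping with this global observation closes the gap.
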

\begin{proof}
  Let~$A$ be an abelian variety in~$\cI_h$.
  Put~$S=\End(A)$, and note that~$R\subseteq S$.
  Hence~$(1-\pi)S$ is coprime to the conductor~$(S:\cO_K)$ of~$S$ in~$\cO_K$.
  For every prime~$\p$ of~$S$ containing~$(1-\pi)$ we have~$S_\p=\cO_{K,\p}$ by Lemma~\ref{lemma:rel_cond_same_order}.
  Therefore~$S$ is Gorenstein at~$\p$ for every~$\p$ containing~$(1-\pi)$.
  By Theorem~\ref{thm:Gorenstein_general} we have that
  \[ A(\F_q) \cong \frac{S}{(1-\pi)S}. \]
  Since~$S$ is a fractional~$R$-ideal, Proposition~\ref{prop:coprime_cond} gives us an~$R$-linear isomorphism $S/(1-\pi)S\cong R/(1-\pi)R$.
  We conclude by observing that 
  \[ \frac{R}{(1-\pi)R} \cong  \frac{\Z[x,y]}{(h(1),x-1,y-q)} \cong \frac{\Z}{(h(1))},\]
  which is immediate from the method of the proof of \cite[Prop.~2.7]{MarSpr_pts_PAMS}.
\end{proof}


We now prove a strong converse to Proposition~\ref{prop:coprime_cyclic}.

\begin{prop}\label{prop:cyclicity_converse_general}
  Let~$A$ be a square-free abelian variety over~$\F_q$ with Gorenstein endomorphism ring~$S = \End(A)$ and Frobenius endomorphism~$\pi$.
  If~$(1-\pi)R$ is not coprime to the conductor~$\frf = (R : S)$, then~$A(\F_{q^n})$ is a non-cyclic~$R$-module for all~$n\geq 1$.
  In particular, every abelian variety in the isogeny class with endomorphism ring~$S$ has a non-cyclic group of points.
\end{prop}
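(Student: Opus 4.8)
The plan is to reduce to the case $n=1$ and then feed the hypotheses into Proposition~\ref{prop:converse_O_K}. First I would observe that it suffices to prove $A(\F_{q^n})$ is a non-cyclic $R$-module for all $n\geq 1$; the final sentence then follows because any $B$ in the isogeny class with $\End(B)=S$ has $A(\F_{q^n})\cong B(\F_{q^n})$ as $S$-modules, hence as $R$-modules, by Corollary~\ref{cor:Gorenstein} applied with the Gorenstein order $S$ (note $B(\F_{q^n})=B[1-\pi^n]$ and $S$ is in particular Gorenstein at the primes containing $1-\pi^n$). So the heart of the matter is the single variety $A$.

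Next I would identify $A(\F_{q^n})$ with $A[1-\pi^n]$, which by Theorem~\ref{thm:Gorenstein_general} (using that $S$ is Gorenstein everywhere and $1-\pi^n$ is a separable isogeny) is isomorphic to $S/(1-\pi^n)S$ as an $S$-module. Since $R\subseteq S$, it is enough to show $S/(1-\pi^n)S$ is non-cyclic as an $R$-module. I would like to apply Proposition~\ref{prop:converse_O_K} with the pair of orders $R\subseteq S$, the element $r=1-\pi^n$, and the conductor $\frf=(R:S)$; the conclusion there is exactly that $S/(1-\pi^n)S$ is not a cyclic $R$-module, provided (i) $R\subsetneq S$, (ii) $1-\pi^n$ is not a zero-divisor in $R$, and (iii) $(1-\pi^n)R$ is not coprime to $\frf$. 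Condition (ii) is clear since $K$ is étale and $1-\pi^n\neq 0$ (the roots of $h$ are Weil numbers, not roots of unity). Condition (i) follows from (iii): if $R=S$ then $\frf=R$ and coprimality would be automatic, contradicting the hypothesis for $n=1$ and hence, once (iii) is established for all $n$, for all $n$.

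The main obstacle is establishing (iii): we are told $(1-\pi)R$ is not coprime to $(R:S)$, i.e.\ there is a prime $\p$ of $R$ containing both $1-\pi$ and $\frf$, and we must produce, for each $n\geq 1$, a prime containing both $1-\pi^n$ and $\frf$. The same prime $\p$ works: since $1-\pi\in\p$ we have $\pi\equiv 1\pmod{\p}$, hence $\pi^n\equiv 1\pmod{\p}$, so $1-\pi^n\in\p$, while $\frf\subseteq\p$ still holds. Thus $(1-\pi^n)R+\frf\subseteq\p\neq R$, giving (iii) for every $n$. With (i), (ii), (iii) in hand, Proposition~\ref{prop:converse_O_K} yields that $A(\F_{q^n})\cong S/(1-\pi^n)S$ is a non-cyclic $R$-module, and the $R$-module structure restricts along $R\subseteq R$ trivially; the in-particular clause follows as explained above. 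The only subtlety worth double-checking is that $\frf=(R:S)$ is a nonzero ideal of $R$ so that "coprime" makes sense, which holds because $R$ and $S$ are both orders in $K$ with $R\subseteq S\subseteq\cO_K$.
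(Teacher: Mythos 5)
Your proposal is correct and follows essentially the same route as the paper: identify $A(\F_{q^n})\cong S/(1-\pi^n)S$ via Corollary~\ref{cor:Gorenstein}, note that non-coprimality of $(1-\pi)R$ with $\frf$ forces $(1-\pi^n)R+\frf\subseteq\p\subsetneq R$ (your congruence $\pi\equiv 1\bmod\p$ is the same observation the paper makes via the factorization $1-\pi^n=(1-\pi)(1+\pi+\cdots+\pi^{n-1})$), and then apply Proposition~\ref{prop:converse_O_K}. Your extra checks that $R\subsetneq S$, that $1-\pi^n$ is a non-zero-divisor, and the explicit treatment of the ``in particular'' clause are details the paper leaves implicit, and they are handled correctly.
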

\begin{proof} 
  We have~$A(\F_{q^n})\cong S/(1 - \pi^n)S$ as~$S$-modules by Corollary~\ref{cor:Gorenstein}. 
  Observe that~$(1- \pi^n)R$ is not coprime to~$\frf$
  because~$(1-\pi^n)=(1-\pi)(1+\pi+\ldots+\pi^{n-1})$ implies~$R \supsetneq (1-\pi)R + \frf \supseteq (1- \pi^n)R +\frf$ for all~$n\geq 1$.
  Therefore,~$A(\F_{q^n})$ is a non-cyclic~$R$-module by Proposition~\ref{prop:converse_O_K}.
\end{proof}

\begin{cor}\label{cor:cyclicity_converse}
  If~$(1-\pi)R$ is not coprime to the conductor~$\frf = (R : \cO_K)$, then~$A(\F_{q^n})$ is a non-cyclic~$R$-module for all~$n\geq 1$  for every~$A$ in~$\cI_h$ with maximal endomorphism ring.
  In particular, the isogeny class~$\cI_h$ is non-cyclic.
\end{cor}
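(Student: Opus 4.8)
The statement to prove is: if $(1-\pi)R$ is not coprime to $\frf = (R : \cO_K)$, then for every abelian variety $A$ in $\cI_h$ with maximal endomorphism ring $\End(A) = \cO_K$, the group $A(\F_{q^n})$ is a non-cyclic $R$-module for all $n \geq 1$; in particular $\cI_h$ is non-cyclic. The plan is to deduce this as an immediate special case of Proposition \ref{prop:cyclicity_converse_general}.

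First I would observe that the maximal order $\cO_K$ is Gorenstein. This is because every localization of $\cO_K$ is a principal ideal ring (as recalled in Section \ref{sec:frac_ideals}, this characterizes the maximal order), hence $(\cO_K^t)_\p$ is principal at every prime $\p$, so $\type_\p(\cO_K) = 1$ for all $\p$; equivalently, $\cO_K$ is Gorenstein. Thus any $A \in \cI_h$ with $\End(A) = \cO_K$ is a squarefree abelian variety whose endomorphism ring is Gorenstein, so the hypotheses of Proposition \ref{prop:cyclicity_converse_general} are met with $S = \cO_K$.

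Next, the conductor appearing in Proposition \ref{prop:cyclicity_converse_general} is $(R : S) = (R : \cO_K) = \frf$, which is exactly the ideal in the present statement. So the hypothesis that $(1-\pi)R$ is not coprime to $\frf$ is precisely the hypothesis needed. Applying Proposition \ref{prop:cyclicity_converse_general} then gives directly that $A(\F_{q^n})$ is a non-cyclic $R$-module for all $n \geq 1$. Since a cyclic abelian group is in particular a cyclic $R$-module (the $R$-action factors through $\Z$ via $\pi \mapsto 1$, $\bar\pi \mapsto q$ by the identification fixed in the introduction, so an $\F_q$-point group that is cyclic as a group is generated by one element as an $R$-module), a non-cyclic $R$-module structure forces the underlying group $A(\F_{q^n})$ to be non-cyclic as a group. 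Finally, since there does exist an abelian variety in $\cI_h$ with maximal endomorphism ring — for instance, by \cite[Thm.~3.13]{Wat69} one can choose, in each simple factor, a variety with maximal endomorphism ring, and take the product — the isogeny class $\cI_h$ contains a variety with a non-cyclic group of points, hence is non-cyclic by definition.

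I do not expect any serious obstacle here: the only points requiring a word of justification are that $\cO_K$ is Gorenstein and that a variety with maximal endomorphism ring exists in $\cI_h$, both of which are standard. The corollary is genuinely just the contrapositive-flavored companion to Proposition \ref{prop:coprime_cyclic}, now instantiated at $S = \cO_K$, and together with that proposition it completes the proof of Theorem \ref{thm:coprime_iff_cyclic}.
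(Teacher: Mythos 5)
Your proposal is correct and follows exactly the paper's route: apply Proposition~\ref{prop:cyclicity_converse_general} with $S=\cO_K$, using that $\cO_K$ is Gorenstein and that a variety with maximal endomorphism ring exists in $\cI_h$ by Waterhouse's theorem. The extra remarks on why non-cyclicity as an $R$-module forces non-cyclicity as a group are fine but not needed beyond what the paper records.
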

\begin{proof}
  Observe that~$\cO_K$ is the endomorphism ring of an abelian variety in~$\cI_h$ by \cite[Theorem 3.13]{Wat69}.
  Now, apply Proposition~\ref{prop:cyclicity_converse_general} with~$S=\cO_K$, which is Gorenstein.
\end{proof}

\section{Non-cyclic groups of rational points}\
\label{sec:non-cyclic}
In Section~\ref{sec:coprime_conductor}, we gave a characterization of isogeny classes in which every abelian variety has cyclic group of points.
In this section, we go in the opposite direction. In Theorem~\ref{thm:rich_condition}, we characterize isogeny classes~$\cI_h$ in which every abelian group of order~$h(1)$ occurs as the group of rational points, and we call such isogeny classes \emph{rich}; see Definiton~\ref{df:rich}. 

There are two main tools which we require to study rich isogeny classes.
The first is Lemma~\ref{lem:GiangrecoMaidana}. It generalizes a result by Giangreco-Maidana \cite[Lem.~2.1]{Giangreco-Maidana19} which was originally used to study cyclic isogeny classes.
We also use a theorem of Rybakov, recalled below as Theorem~\ref{thm:rybakov}, which 
provides a criterion for the existence of abelian varieties in a given squarefree isogeny class with a prescribed group of rational points. 

To conclude the section, we use Rybakov's theorem again to prove in Proposition~\ref{prop:exceptional_Fq} the existence of ordinary abelian varieties over~$\F_q$ whose~$\ell$-primary part has~$2$ generators whenever~$q \equiv 1 \bmod \ell$. This allows us to deduce an improved version of \cite[Thm.~3.3]{MarSpr_pts_PAMS}, which we present below as Theorem~\ref{thm:MS_improved}.

\subsection{Lemmas about characteristic polynomials}\
We provide some basic lemmas concerning minimal polynomials which we will apply to the polynomial~$h$ for a squarefree isogeny class~$\cI_h$.
For an element~$\alpha$ in an \'etale algebra~$K$ over~$\Q$, we write, respectively,~$h_\alpha(x)$ and~$m_{\alpha}(x)$ for the characteristic and minimal polynomials of the~$\Q$-linear map on~$K$ defined by multiplication by~$\alpha$.
Note that if~$K = \prod_{j = 1}^t K_j$ for number fields~$K_1, \dots, K_t$, and~$\alpha = (\alpha_1, \dots,\alpha_t)$, then we have $h_\alpha(x) = \prod_{j = 1}^t h_{\alpha_i}(x)$.
Observe that~$\alpha$ is in the maximal order~$\cO_K$ of~$K$ if and only if~$m_\alpha(x)$ has integer coefficients.

For a squarefree isogeny class~$\cI_h$, the previous notation applied to the \'etale algebra~$K = \Q[x]/(h)$ leads to~$h(x) = h_{\pi}(x) = m_{\pi}(x)$ where~$\pi \in K$ corresponds to the Frobenius endomorphism on any abelian variety in~$\cI_h$.

\begin{lemma}\label{lem:min_poly_mod}
    Let~$K$ be an \'etale algebra, let~$\alpha \in K^\times$, and let~$b,c\in \Q$, with~$b\neq 0$.
    Define~$r = \deg h_\alpha$.
    Then
    \[ h_{b\alpha+c}(x) = b^r\cdot h_\alpha\left(\frac{x}{b} - \frac{c}{b}\right) \]
    and
    \[ h_{1/\alpha}(x) = \frac{x^r}{h_\alpha(0)} \cdot h_{\alpha}(1/x). \]
    In other words, we recognize~$h_{1/\alpha}(x)$ as the reverse of the polynomial~$h_\alpha(x)/h_\alpha(0)$.
\end{lemma}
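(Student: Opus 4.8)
The plan is to prove the two claimed identities by the standard trick of relating characteristic polynomials of multiplication maps to splitting fields and eigenvalues, or, more elementarily, by direct manipulation of determinants.

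First I would set up notation: fix a $\Q$-basis of $K$ and let $M_\alpha$ denote the matrix of multiplication by $\alpha$, so that $h_\alpha(x) = \det(xI - M_\alpha)$, where $I$ is the $r\times r$ identity and $r = \deg h_\alpha = \dim_\Q K$. For the first identity, note that multiplication by $b\alpha + c$ has matrix $bM_\alpha + cI$, so
\[
  h_{b\alpha+c}(x) = \det\bigl(xI - bM_\alpha - cI\bigr) = \det\!\left(b\left(\tfrac{x-c}{b}I - M_\alpha\right)\right) = b^r \det\!\left(\tfrac{x-c}{b}I - M_\alpha\right) = b^r h_\alpha\!\left(\tfrac{x}{b} - \tfrac{c}{b}\right),
\]
using that $\det(bN) = b^r\det(N)$ for an $r\times r$ matrix $N$, and that $b\neq 0$ guarantees the scaling is legitimate.

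For the second identity, the key observation is that since $\alpha\in K^\times$, the matrix $M_\alpha$ is invertible, and $M_{1/\alpha} = M_\alpha^{-1}$. Also $h_\alpha(0) = \det(-M_\alpha) = (-1)^r\det(M_\alpha)$, which is nonzero. Then I would compute
\[
  h_{1/\alpha}(x) = \det\bigl(xI - M_\alpha^{-1}\bigr) = \det\!\left(M_\alpha^{-1}\right)\det\bigl(xM_\alpha - I\bigr) = \det\!\left(M_\alpha^{-1}\right)\cdot x^r \det\!\left(M_\alpha - \tfrac1x I\right).
\]
Now $\det(M_\alpha - \tfrac1x I) = (-1)^r\det(\tfrac1x I - M_\alpha) = (-1)^r h_\alpha(1/x)$, and $\det(M_\alpha^{-1}) = 1/\det(M_\alpha) = (-1)^r/h_\alpha(0)$. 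Multiplying these together the two signs $(-1)^r$ cancel, yielding $h_{1/\alpha}(x) = \tfrac{x^r}{h_\alpha(0)} h_\alpha(1/x)$, as claimed. The final sentence identifying this as the reverse of $h_\alpha(x)/h_\alpha(0)$ is then just the remark that if $h_\alpha(x) = \sum_{i=0}^r a_i x^i$ with $a_r = 1$, then $x^r h_\alpha(1/x) = \sum_{i=0}^r a_i x^{r-i} = \sum_{j=0}^r a_{r-j} x^j$, and dividing by $h_\alpha(0) = a_0$ gives the reversed-and-normalized polynomial.

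There is no real obstacle here; the only thing to be slightly careful about is bookkeeping of the sign $(-1)^r$ and the hypothesis $\alpha\in K^\times$ (needed so that $M_\alpha$ is invertible and $h_\alpha(0)\neq 0$). One could alternatively phrase the whole argument over a splitting field, writing $h_\alpha(x) = \prod_{i=1}^r (x - \lambda_i)$ for the eigenvalues $\lambda_i$ (all nonzero), and observing that the eigenvalues of $b\alpha+c$ and $1/\alpha$ are $b\lambda_i + c$ and $1/\lambda_i$ respectively; this makes both identities immediate, but requires first justifying that the multiplication matrix is diagonalizable over $\bar\Q$, which follows from $K$ being \'etale (equivalently, $h_\alpha$ being separable when $\alpha$ generates, but in general one should argue factor-by-factor using $h_\alpha = \prod_j h_{\alpha_j}$ over the number field factors $K_j$). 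Either route works; I would present the determinant computation since it avoids any diagonalizability discussion.
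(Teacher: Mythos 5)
Your argument is correct, but it takes a genuinely different route from the paper. You work directly with the matrix $M_\alpha$ of multiplication by $\alpha$ and compute $h_{b\alpha+c}$ and $h_{1/\alpha}$ by elementary determinant manipulations: $\det(xI-bM_\alpha-cI)=b^r\det\left(\tfrac{x-c}{b}I-M_\alpha\right)$, and for the inverse, $M_{1/\alpha}=M_\alpha^{-1}$ together with $\det(xI-M_\alpha^{-1})=\det(M_\alpha^{-1})\det(xM_\alpha-I)$ and $h_\alpha(0)=(-1)^r\det(M_\alpha)$; the signs indeed cancel as you say, and the only bookkeeping point is that the intermediate step $\det(xM_\alpha-I)=x^r\det\bigl(M_\alpha-\tfrac1x I\bigr)$ lives in $\Q(x)$, which is harmless since the end result $\tfrac{x^r}{h_\alpha(0)}h_\alpha(1/x)$ is visibly a polynomial (the normalized reverse of $h_\alpha$). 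The paper instead first proves the lemma when $K=\Q(\alpha)$ is a number field, by exhibiting the right-hand sides as monic rational polynomials of degree $r$ that vanish at $b\alpha+c$ and $1/\alpha$ respectively and are irreducible (using $\Q(\alpha)\cong\Q(b\alpha+c)\cong\Q(1/\alpha)$), hence equal the minimal $=$ characteristic polynomials; it then passes to a general \'etale algebra by writing $K=\prod_j K_j$, $\alpha=(\alpha_j)$, and using $h_\alpha=\prod_j m_{\alpha_j}^{d_j}$ with $d_j=[K_j:\Q(\alpha_j)]$. Your determinant computation is more uniform, needing neither the decomposition into number-field factors nor any irreducibility argument, and it uses only the definition of $h_\alpha$ as the characteristic polynomial of the multiplication map; the paper's argument, while requiring the factor-by-factor reduction, stays within the minimal-polynomial formalism that it reuses elsewhere. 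Either proof is complete and acceptable.
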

\begin{proof}
    First, we prove the statements when~$K = \Q(\alpha)$ is a number field. Both~$b^r\cdot m_\alpha\left(\frac{x}{b} - \frac{c}{b}\right)~$ and~$\frac{x^r}{m_\alpha(0)} \cdot m_{\alpha}(1/x)$ are monic, with coefficients in~$\Q$, of degree~$r$, and are~$0$ when evaluated at~$b\alpha+c$ and~$1/\alpha$ respectively.
    They are irreducible since we have isomorphisms
   ~$\Q(\alpha)\cong \Q (b\alpha+c) \cong \Q(1/\alpha).$

    In general, write~$K = \prod_{j = 1}^t K_j$ for number fields~$K_1, \dots, K_t$ and~$\alpha = (\alpha_1, \dots,\alpha_t)$.
    Setting~$d_j = [K_j : \Q(\alpha_j)]$, we observe that~$h_{\alpha}(x) = \prod_{j =1}^t m_{\alpha_j}(x)^{d_j}$.
    The claims follow from the previous case:
   ~$$
    h_{b\alpha+c}(x) = \prod_{j =1}^t m_{b\alpha_j+c}(x)^{d_j}
        = \prod_{j =1}^t b^{d_j\deg m_{\alpha_j}}\cdot m_{\alpha_j}\left(\frac{x}{b} - \frac{c}{b}\right)^{d_j} 
        =b^r\cdot h_\alpha\left(\frac{x}{b} - \frac{c}{b}\right);
   ~$$
   ~$$
      h_{1/\alpha}(x) = \prod_{j =1}^t m_{1/\alpha_j}(x)^{d_j}
        = \prod_{j =1}^t \frac{x^{d_j\deg m_{\alpha_j}}}{m_{\alpha_j}(0)} \cdot m_{\alpha_j}(1/x)^{d_j}
        =\frac{x^r}{h_\alpha(0)} \cdot h_{\alpha}(1/x).
   ~$$
\end{proof}

We apply the previous lemma to obtain a formula for the coefficients of the minimal polynomials of two related algebraic numbers.

\begin{lemma}\label{lem:coeff}
  Let~$\alpha$ be an element of an \'etale algebra~$K$ over~$\Q$ satisfying~$1-\alpha \in K^\times$, and let~$d\in \Q$.
  The coefficients of~$h_{\frac{d}{(1-\alpha)}}(x) = \sum_{i = 0}^r a_ix^i$
  are given by the formula
  \[ a_i = \frac{(-1)^{r+i}d^{r-i}h^{(r-i)}_\alpha(1)}{(r-i)!h_\alpha(1)}, \]
  where~$h^{(i)}_{\alpha}(x)$ is the~$i$-th derivative of the polynomial~$h_\alpha(x)$.
\end{lemma}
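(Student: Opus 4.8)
The strategy is to reduce everything to Lemma~\ref{lem:min_poly_mod}, which already computes how characteristic polynomials transform under $\alpha \mapsto b\alpha + c$ and under $\alpha \mapsto 1/\alpha$. The element $\tfrac{d}{1-\alpha}$ is the composite of the invertible affine map $\alpha \mapsto 1-\alpha$ (take $b=-1$, $c=1$), followed by inversion, followed by scaling by $d$ (take $b=d$, $c=0$). So I would first establish an auxiliary formula for $h_{1-\alpha}(x)$ in terms of the derivatives of $h_\alpha$ at $1$, then apply the reversal formula from Lemma~\ref{lem:min_poly_mod}, then the scaling formula. Throughout, $r = \deg h_\alpha = \deg h_{1-\alpha} = \deg h_{d/(1-\alpha)}$, since all these elements generate étale subalgebras of the same dimension (indeed the relevant maps are $\Q$-algebra isomorphisms on $\Q(\alpha)$, componentwise).

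\emph{First step: the Taylor expansion of $h_\alpha$ around $1$.} Writing $h_\alpha(x) = \sum_{k=0}^{r} c_k (x-1)^k$ with $c_k = h_\alpha^{(k)}(1)/k!$, Lemma~\ref{lem:min_poly_mod} with $b=-1$, $c=1$ gives
\[
  h_{1-\alpha}(x) = (-1)^r h_\alpha(1-x) = (-1)^r \sum_{k=0}^{r} c_k (-x)^k = \sum_{k=0}^{r} (-1)^{r+k} \frac{h_\alpha^{(k)}(1)}{k!}\, x^k.
\]
In particular $h_{1-\alpha}(0) = (-1)^r h_\alpha(1)$, which is nonzero precisely because $1-\alpha \in K^\times$.

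\emph{Second step: reversal and scaling.} Set $\beta = 1-\alpha$. By the second identity of Lemma~\ref{lem:min_poly_mod},
\[
  h_{1/\beta}(x) = \frac{x^r}{h_\beta(0)} h_\beta(1/x) = \frac{x^r}{(-1)^r h_\alpha(1)} \sum_{k=0}^{r} (-1)^{r+k} \frac{h_\alpha^{(k)}(1)}{k!}\, x^{-k} = \sum_{k=0}^{r} \frac{(-1)^k}{k!}\cdot\frac{h_\alpha^{(k)}(1)}{h_\alpha(1)}\, x^{r-k},
\]
and reindexing with $i = r-k$ gives the coefficient of $x^i$ in $h_{1/(1-\alpha)}(x)$ as $\dfrac{(-1)^{r-i} h_\alpha^{(r-i)}(1)}{(r-i)!\,h_\alpha(1)}$. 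Finally, applying Lemma~\ref{lem:min_poly_mod} with $b=d$, $c=0$ to $\gamma = 1/(1-\alpha)$ yields $h_{d\gamma}(x) = d^r h_\gamma(x/d)$, which multiplies the coefficient of $x^i$ by $d^r \cdot d^{-i} = d^{r-i}$. Combining, the coefficient $a_i$ of $x^i$ in $h_{d/(1-\alpha)}(x)$ equals
\[
  a_i = \frac{(-1)^{r-i}\, d^{r-i}\, h_\alpha^{(r-i)}(1)}{(r-i)!\, h_\alpha(1)},
\]
and since $(-1)^{r-i} = (-1)^{r+i}$ this is exactly the claimed formula.

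\emph{Main obstacle.} There is no serious obstacle here — the content is entirely in Lemma~\ref{lem:min_poly_mod}, and the only care needed is bookkeeping of signs and the reindexing $i \leftrightarrow r-i$, together with checking that degrees are preserved so that these are genuine degree-$r$ polynomials (equivalently, that none of the intermediate elements is a zero-divisor, which follows from $1-\alpha \in K^\times$). One could also phrase the whole argument in a single line by applying Lemma~\ref{lem:min_poly_mod} to the composite element directly, but splitting into the three elementary operations makes the sign tracking transparent.
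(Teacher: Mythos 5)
Your proposal is correct and follows essentially the same route as the paper's proof: both decompose $\tfrac{d}{1-\alpha}$ into the operations handled by Lemma~\ref{lem:min_poly_mod} (the paper scales first via $\beta=\tfrac1d(1-\alpha)$ and then inverts, while you invert first and scale last, a purely cosmetic reordering) and both extract the coefficients from the derivatives of $h_\alpha$ at $1$. The sign bookkeeping and the reindexing $i\leftrightarrow r-i$ in your write-up check out against the stated formula.
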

\begin{proof}
  We will use Lemma \ref{lem:min_poly_mod} several times.
	Let~$\beta=\frac{1}{d}(1-\alpha)$. We have
  \begin{equation}\label{lem:coeff:eq:1}
      h_{\frac{d}{(1-\alpha)}}(x) = h_{\frac{1}{\beta}}(x)=\frac{x^r}{h_\beta(0)} h_\beta\left( \frac{1}{x} \right);
  \end{equation}
  \begin{equation}\label{lem:coeff:eq:2}
      h_\beta(x) = \left( \frac{1}{d} \right)^r h_{1-\alpha}\left( dx \right);
  \end{equation}
  \begin{equation}\label{lem:coeff:eq:3}
      h_{1-\alpha}(x) = (-1)^r h_{\alpha}(1-x).
  \end{equation}
  Using Equations \eqref{lem:coeff:eq:2} with \eqref{lem:coeff:eq:3} we get
  \begin{equation}\label{lem:coeff:eq:4}
      h_\beta(0) = \left( \frac{1}{d} \right)^r h_{1-\alpha}\left( 0 \right) = (-1)^r\left( \frac{1}{d} \right)^r h_\alpha(1).
  \end{equation}
  Combining Equations \eqref{lem:coeff:eq:4} and \eqref{lem:coeff:eq:2} with Equation \eqref{lem:coeff:eq:1}, we obtain
  \begin{equation}\label{lem:coeff:eq:5}
      h_{\frac{d}{1-\alpha}}(x)
      =\frac{(-x)^r}{h_\alpha(1)} h_{1-\alpha}\left( \frac{d}{x} \right).
  \end{equation}
  Define~$h_{1-\alpha}(x) = x^r + b_{r-1}x^{r-1} + \ldots + b_{1}x+b_0$,
  and set~$b_r=1$.
  By Equation \eqref{lem:coeff:eq:3}, for every~$i=1,\ldots,r$ we get
  \begin{equation}\label{lem:coeff:eq:7}
      b_i = \frac{1}{i!}h^{(i)}_{1-\alpha}(x)\bigg\rvert_{x=0} = \frac{1}{i!}(-1)^{r+i}h^{(i)}_\alpha(1-x)\bigg\rvert_{x=0} = \frac{1}{i!}(-1)^{r+i}h^{(i)}_\alpha(1).
  \end{equation}
  Combining Equations \eqref{lem:coeff:eq:5}
  and \eqref{lem:coeff:eq:7}, we obtain
  \begin{align*}\label{lem:coeff:eq:8}
      h_{\frac{d}{(1-\alpha)}}(x)
          & = \frac{(-x)^r}{h_\alpha(1)}\left( \left(\frac{d}{x}\right)^r + b_{r-1} \left(\frac{d}{x}\right)^{r-1} + \ldots + b_1\left(\frac{d}{x}\right) + b_0 \right) \\
          & = \sum_{i=0}^r\frac{(-x)^r}{h_\alpha(1)}\left(\frac{d}{x}\right)^{r-i}b_{r-i}\\
          & = \sum_{i=0}^r\frac{(-x)^r}{h_\alpha(1)}\left(\frac{d}{x}\right)^{r-i}\cdot\frac{1}{(r-i)!}(-1)^{i}h^{(r-i)}_\alpha(1)\\
          & = \sum_{i=0}^r\frac{(-1)^{r+i}d^{r-i}h^{(r-i)}_\alpha(1)}{(r-i)!h_\alpha(1)}x^i.
  \end{align*}
  This concludes the proof.
\end{proof}

\subsection{Rich isogeny classes}\
In Section~\ref{sec:coprime_conductor}, we studied cyclic isogeny classes. Now we study the opposite extreme.

\begin{df}\label{df:rich}
  We call an isogeny class~$\cI_h$ \emph{rich} if every abelian group of order~$h(1)$ occurs as the group of rational points for some abelian variety in~$\cI_h$.
\end{df}

The following lemma detects the annihilator of the group of rational points in terms of the existence of certain endomorphisms.
It generalizes \cite[Lem.~2.1]{Giangreco-Maidana19}, which was originally presented for the sake of studying cyclic isogeny classes. 
We use our generalization to study rich isogeny classes instead.
\begin{lemma}\label{lem:GiangrecoMaidana}
    Let~$A$ be an abelian variety over~$\F_q$ with~$N$ rational points.
    Denote by~$\pi$ the Frobenius of~$A$.
    Let~$d$ be a divisor of~$N$.
    Then the following are equivalent:
    \begin{enumerate}[(a)]
        \item~$d A(\F_q) = 0~$.
        \item~$A(\F_q) \subseteq \ker\left(\left[d\right]\right)(\bar\F_q)$.
        \item There exists~$\vphi\in \End_{\F_q}(A)$ such that~$\left[d\right] = \vphi\circ (1-\pi)$.
        Moreover such~$\vphi$ lives in the center of~$\End_{\F_q}(A)$.
    \end{enumerate}
\end{lemma}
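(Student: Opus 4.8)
The plan is to prove the cycle of implications $(a)\Rightarrow(b)\Rightarrow(c)\Rightarrow(a)$, working with the $\bar\F_q$-points of the relevant kernels and exploiting the fact that $A(\F_q) = A[1-\pi] = \ker(1-\pi)(\bar\F_q)$, since Frobenius fixes exactly the rational points.

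First, the equivalence $(a)\Leftrightarrow(b)$ is essentially a restatement: $dA(\F_q) = 0$ means every rational point is killed by multiplication by $d$, i.e.\ $A(\F_q)$ is contained in the $d$-torsion, which is $\ker([d])(\bar\F_q)$. Since $d \mid N = \#A(\F_q)$, both conditions are automatically sensible; no subtlety arises here beyond unwinding definitions.

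Next, for $(b)\Rightarrow(c)$: assuming $A(\F_q) = \ker(1-\pi)(\bar\F_q) \subseteq \ker([d])(\bar\F_q)$, I want to produce $\vphi$ with $[d] = \vphi\circ(1-\pi)$. The natural approach is to observe that $1-\pi\colon A \to A$ is a separable isogeny (as in Corollary~\ref{cor:Gorenstein}), so it factors multiplication by $\deg(1-\pi) = N$; more directly, the containment of kernels means that $[d]$ factors through the quotient isogeny $1-\pi$. Concretely, $1-\pi$ induces an isomorphism $A/\ker(1-\pi) \xrightarrow{\sim} A$, and the inclusion $\ker(1-\pi) \subseteq \ker[d]$ lets $[d]$ descend to a morphism $A = A/\ker(1-\pi) \to A$; composing with the inverse of the induced isomorphism yields $\vphi \in \Hom(A,A) = \End_{\F_q}(A)$ with $\vphi\circ(1-\pi) = [d]$. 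For the ``moreover'' claim that $\vphi$ is central: since $[d]$ is central and $1-\pi$ is central (as $\pi$ is central in $\End(A)$ — Frobenius lies in the center by Tate/Milne–Waterhouse), for any $\psi \in \End(A)$ we have $(\psi\vphi - \vphi\psi)(1-\pi) = \psi[d] - [d]\psi = 0$, and since $1-\pi$ is an isogeny (hence not a zero-divisor in $\End(A)$, which has no zero-divisors on the relevant factors once we work up to isogeny — or: $1-\pi$ is injective as a map of abelian varieties), we conclude $\psi\vphi = \vphi\psi$. I should be slightly careful to phrase the cancellation correctly: $\End(A)$ embeds in $\End(A)\otimes\Q = K$ in the squarefree case, but the lemma is stated for general $A$, so I'd argue via: an isogeny is not a zero-divisor in $\End(A)$ because composing with it is injective on $\Hom$-groups (as $\End(A)$ is torsion-free and $1-\pi$ is surjective on $\bar\F_q$-points with finite kernel, so right-composition $\End(A) \to \End(A)$ has trivial kernel).

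Finally, $(c)\Rightarrow(a)$: if $[d] = \vphi\circ(1-\pi)$, then for any $P \in A(\F_q)$ we have $(1-\pi)P = 0$, so $[d]P = \vphi((1-\pi)P) = \vphi(0) = 0$, giving $dA(\F_q) = 0$. This direction is immediate. The main obstacle, such as it is, will be the careful justification of the factorization in $(b)\Rightarrow(c)$ and the centrality argument, since one must invoke separability of $1-\pi$ and the non-zero-divisor property without accidentally assuming commutativity of $\End(A)$; everything else is routine. I would cite \cite[Prop.~12.12]{Milne86} or the universal property of quotient isogenies for the factorization, mirroring the style already used in Theorem~\ref{thm:Gorenstein_general}.
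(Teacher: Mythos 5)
Your proposal is correct and follows essentially the same route as the paper: (a)$\Leftrightarrow$(b) by unwinding definitions, (c)$\Rightarrow$(a) immediately, and (b)$\Rightarrow$(c) by using separability of $1-\pi$ to pass from the containment of $\bar\F_q$-points to a containment of kernels and then factoring $[d]$ through the quotient isogeny via the universal property, exactly as in the paper's commutative diagram. Your cancellation argument for centrality (right-composition with the isogeny $1-\pi$ is injective on $\End(A)$) is just a more explicit version of the paper's brief remark that $\vphi$ inherits centrality from $[d]$ and $1-\pi$.
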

\begin{proof}
    The first two statements are clearly equivalent.
    To show that the second and the third are equivalent, we first observe that~$A(\F_q)=\ker(1-\pi)(\bar\F_q)$.
    In particular, it is clear that the third statement implies the second.
    Now, assume that the second statement holds, that is,~$\ker(1-\pi)(\bar\F_q)\subseteq \ker([d])(\bar\F_q)$.
    Then by the separability of~$1-\pi$, we have also~$\ker(1-\pi)\subseteq \ker([d])$.
    Consider the following commutative diagram
    \[
    \xymatrix{
                                & & A\\
        A \ar@{->>}[rr] \ar[urr]^{[d]} \ar[drr]_{1-\pi} &  & \dfrac{A}{\ker(1-\pi)} \ar[u]_{\tilde\vphi} \ar[d]^{\iota}\\
                                & & A
    }
    \]
    where the middle arrow is the canonical projection,~$\iota$ is the isomorphism induced by~$1-\pi$ and~$\tilde\vphi$ is the (unique) map induced by the inclusion~$\ker(1-\pi)\subseteq \ker([d])$ via the universal property of the quotient.
    Put~$\vphi = \tilde\vphi \circ\iota^{-1}$.
    Therefore
    \[ \vphi \circ (1-\pi)  = \left[ d \right]. \]
    Since both~$1-\pi$ and~$[d]$ are in the center and defined over~$\F_q$, the same holds for~$\vphi$, as required.
\end{proof}

Given two isogenous abelian varieties~$A$ and~$B$ over~$\F_q$, we identify the endomorphism algebras~$\End(A) \otimes_\Z \Q \cong\End(B) \otimes_\Z \Q$ with an isomorphism which maps the Frobenius endomorphism of~$A$ to the Frobenius endomorphism of~$B$.
We denote this element by~$\pi$.

\begin{prop}
  \label{prop:max_end_exp}
  Let~$A$ be an abelian variety over~$\F_q$ whose group of rational points~$A(\F_q)$ is annihilated by an integer~$d$. For every maximal order~$\cO$ in the endomorphism algebra~$\End(A)\otimes_\Z \Q$, there is an abelian variety~$B$ which is isogenous to~$A$ such that~$\End(B) \cong \cO$. Furthermore,~$B(\F_q)$ is also annihilated by~$d$ for all~$B$ with endomorphism ring~$\End(B) \cong \cO$.
\end{prop}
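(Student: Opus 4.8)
The plan is to use Waterhouse's theorem on the existence of abelian varieties with prescribed endomorphism rings together with Lemma \ref{lem:GiangrecoMaidana}, which converts the statement ``$d A(\F_q) = 0$'' into the existence of an endomorphism $\vphi$ in the center of $\End(A)\otimes_\Z\Q$ with $[d] = \vphi\circ(1-\pi)$. The first observation is that by \cite[Thm.~3.13]{Wat69}, given any order $\cO$ with $\Z[\pi,\bar\pi]\subseteq \cO$ in the center $L = \Q(\pi)$ of the endomorphism algebra (and the appropriate local conditions at $p$, which are automatic for a maximal order), there is an abelian variety $B$ isogenous to $A$ whose endomorphism ring has center $\cO$; taking $\cO$ to be the maximal order $\cO_L$ of $L$ gives a $B$ with $\End(B) = \cO_L$ when $B$ is chosen simple-by-simple, but more to the point, any maximal order $\cO$ in $\End(A)\otimes_\Z\Q$ has center $\cO_L$ and contains $\Z[\pi,\bar\pi]$, so such a $B$ exists. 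This handles the existence half.

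For the second half — that $d$ annihilates $B(\F_q)$ for \emph{every} such $B$ — the key point is that the witnessing element $\vphi$ from Lemma \ref{lem:GiangrecoMaidana}(c) lies in the center of $\End(A)\otimes_\Z\Q$, hence in $\Q(\pi) = L$, and moreover $\vphi = d/(1-\pi)$ as an element of $L$. First I would argue that $\vphi$ is actually integral: since $[d]$ and $1-\pi$ both lie in $\End(A)$, and $\vphi\circ(1-\pi) = [d]$ with $1-\pi$ a non-zero-divisor, $\vphi$ lies in $\End(A)\otimes_\Z\Q$ and is fixed by... actually the cleanest route is that $\vphi = d/(1-\pi) \in L$ has minimal polynomial with integer coefficients because it equals the element exhibited in the diagram, which is a genuine endomorphism of $A$; thus $\vphi \in \cO_L \subseteq \End(B)$ for \emph{any} $B$ with maximal endomorphism ring (since the center of a maximal order is $\cO_L$). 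Then for such a $B$, the element $\vphi$ is a genuine endomorphism of $B$ satisfying $\vphi\circ(1-\pi) = [d]$ in $\End(B)$, so by the implication (c) $\Rightarrow$ (a) of Lemma \ref{lem:GiangrecoMaidana} applied to $B$, we get $d B(\F_q) = 0$.

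The main obstacle I anticipate is making precise the assertion that $\vphi$ is integral over $\Z$, i.e.\ that $d/(1-\pi)$ has integer minimal polynomial — equivalently $\vphi \in \cO_L$. This does not follow merely from $\vphi \in \End(A)$ and $\vphi$ being central in the endomorphism \emph{algebra}; rather, one must use that $\vphi$ is an endomorphism of $A$ lying in $L$, and that $\End(A)\cap L$ is an order in $L$, hence contained in $\cO_L$. So $\vphi\in \End(A)\cap L\subseteq \cO_L$, and $\cO_L$ is the center of every maximal order $\cO$ in $\End(A)\otimes_\Z\Q$ (indeed the center of any order in a division-algebra-times-product situation is contained in $\cO_L$, and for a maximal order it equals $\cO_L$); thus $\vphi\in\cO\subseteq\End(B)$ for every $B$ with $\End(B)\cong\cO$ maximal. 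Once this integrality is in hand, the rest is a direct application of Lemma \ref{lem:GiangrecoMaidana} in both directions, and Waterhouse's theorem for existence.
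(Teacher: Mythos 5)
Your proposal is correct and follows essentially the same route as the paper: existence via Waterhouse's Theorem 3.13, then Lemma \ref{lem:GiangrecoMaidana} to see that $d/(1-\pi)$ is a central endomorphism of $A$, hence integral and contained in the maximal order of the center, hence in $\End(B)$ for any $B$ with maximal endomorphism ring, and a final application of Lemma \ref{lem:GiangrecoMaidana} to conclude $dB(\F_q)=0$. The integrality point you flag as a potential obstacle is exactly the (implicit) step in the paper's proof, and your resolution of it ($\vphi\in\End(A)\cap L\subseteq\cO_L$) is the intended one.
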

\begin{proof}
  The existence statement is \cite[Thm~3.13]{Wat69}.
  Because~$d$ kills~$A(\F_q)$, we see that~$d/(1-\pi)$ is an endomorphism of~$A$ which lies in the center of its endomorphism algebra by Lemma~\ref{lem:GiangrecoMaidana}.
  Because~$B$ has maximal endomorphism ring,~$\End(B)$ contains the maximal order of the center of its endomorphism algebra.
  In particular,~$d/(1-\pi)$ is an element of~$\End(B)$, which implies that~$d$ kills~$B(\F_q)$ by another application of Lemma~\ref{lem:GiangrecoMaidana}.
\end{proof}

We emphasize the fact that Lemma~\ref{lem:GiangrecoMaidana} and Proposition~\ref{prop:max_end_exp} apply to any abelian variety defined over a finite field. In the rest of the section, we restrict our attention to the squarefree case.

For a positive integer~$N$, we write~$\rad N$ for its radical, that is, if~$N = \prod_{j = 1}^s \ell_j^{e_j}$, then we define~$\rad N = \prod_{j = 1}^s \ell_j$. 
We say that~$N$ is the \emph{exponent} of an abelian group~$G$ if~$N$ is the smallest positive integer which annihilates~$G$.
The minimal possible exponent for a finite abelian group of order~$N$ is~$\rad N$, which is achieved by the ``least cyclic" group of order~$N$.
This notion is made precise by the following definition. 

Let~$g \geq 1$ be an integer, and let~$\ell$ be prime. Consider a group~$H$ with~$\ell$-primary part~$H_\ell = \prod_{j = 1}^{2g} (\Z/\ell^{e_j}\Z)$ where~$0\leq e_1\leq\dots\leq e_{2g}$. The~$\ell$-\emph{Hodge polygon} of~$H$ is the polygon with vertices~$(i,\sum_{j = 1}^{2g-i} e_j)$; see \cite[Def.~1.1]{Rybakov10} for details and examples.
We will use this notion when~$H$ is the group of rational points of an abelian variety~$A$, in which case~$g = \dim(A)$. 
Note that the group~$(\Z/\ell\Z)^f$ has the highest~$\ell$-Hodge polygon among abelian groups of order~$\ell^{f}$, while~$(\Z/\ell^f\Z)$ has the lowest.

Given a polynomial~$h(x) = \sum_{i =1}^{2g} a_i x^i \in \Z[x]$ with~$a_0 \neq 0$, the~$\ell$-\emph{Newton polygon} of~$h$ is the boundary of the lower convex hull of the points~$(i, \ord_\ell(a_i))$ for~$0\leq i\leq 2g$. 
The following theorem of Rybakov describes the groups of rational points occuring in squarefree isogeny classes by comparing Hodge polygons and Newton polygons.

\begin{thm}[{\cite[Thm.~1.1]{Rybakov10}}] \label{thm:rybakov}
  Given a squarefree isogeny class~$\cI_h$ of abelian varieties over~$\F_q$, a finite abelian group~$G$ of order~$h(1)$ occurs as the group of rational points of some abelian variety~$A$ in~$\cI_h$ if and only if the~$\ell$-Hodge polygon of~$G$ lies on or below the~$\ell$-Newton polygon of~$h(1-t)$ for all primes~$\ell$.
\end{thm}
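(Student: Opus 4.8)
The statement is Rybakov's theorem \cite[Thm.~1.1]{Rybakov10}, so the intended proof is simply to invoke that reference; the purpose of these remarks is only to indicate why it holds and where the content lies. One inequality between the polygons is essentially an isogeny invariant. Indeed, fix $A$ in $\cI_h$ and write $p$ for the characteristic of $\F_q$. For a prime $\ell \neq p$, the snake lemma applied to $1-\pi$ acting on $0 \to T_\ell A \to V_\ell A \to A[\ell^\infty] \to 0$, together with the invertibility of $1-\pi$ on $V_\ell A$ (which holds because $h(1)\neq 0$), gives $A(\F_q)_\ell \cong T_\ell A/(1-\pi)T_\ell A$; at $\ell = p$ the analogous computation uses the covariant Dieudonn\'e module of the $p$-divisible group of $A$. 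Hence the $\ell$-Hodge polygon of $A(\F_q)$ is the elementary-divisor polygon of $1-\pi$ on a rank-$2g$ lattice over $\Z_\ell$, and the general fact that such a polygon lies on or below the Newton polygon of the characteristic polynomial of the operator gives the bound, since $h_{1-\pi}(t) = h(1-t)$ by Lemma~\ref{lem:min_poly_mod} (here $\deg h = 2g$ is even). As $h$ depends only on $\cI_h$, so does this bound.

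For the converse --- the direction with real content --- one must show that in a squarefree isogeny class \emph{every} polygon obeying the bound is realized. The natural plan is to work one prime $\ell$ at a time: exhibit a lattice in $K\otimes_\Q\Q_\ell$ stable under $\pi$ and $\bar\pi$ (and satisfying the extra Dieudonn\'e-type condition when $\ell = p$) whose quotient by $1-\pi$ has the prescribed elementary divisors, and then glue these local choices into a single global module which, via the dictionary of Waterhouse together with covariant Dieudonn\'e theory, corresponds to an abelian variety in $\cI_h$. The commutativity of $K = \Q[x]/(h)$, that is, squarefreeness, is exactly what supplies the freedom to choose the local lattices with the required behaviour; Proposition~\ref{prop:coprime_cond} and Corollary~\ref{cor:Gorenstein} give the extreme case where no such moving is needed.

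The hard part is this converse, and within it the bookkeeping at $\ell = p$: one has to produce the $p$-adic lattice with both the prescribed elementary divisors and the structure required of a Dieudonn\'e module, and then verify that the assembled global object genuinely comes from an abelian variety rather than only from an abstract module. These verifications are carried out in \cite{Rybakov10} (building on \cite{Wat69}), and we do not repeat them here; we record the theorem only in the form in which it will be used.
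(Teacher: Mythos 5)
The paper gives no proof of this statement beyond the attribution to \cite[Thm.~1.1]{Rybakov10}, and your proposal does exactly the same thing: it invokes that reference and defers the substantive verifications (in particular the realizability converse and the case $\ell = p$) to it. Your accompanying sketch of why the bound holds and where the difficulty lies is consistent with the cited result and with the paper's usage, so there is nothing to correct.
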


We exploit this theorem to characterize which squarefree isogeny classes are rich.

\begin{thm} 
\label{thm:rich_condition}
Consider a squarefree isogeny class~$\cI_h$ of abelian varieties over~$\F_q$ of dimension~$g$.
Let~$K = \Q[x]/(h)$ be the endomorphism algebra, and let~$\pi$ be the class of~$x$.
Write~$N = h(1) = \prod_{j = 1}^s \ell_{j}^{e_j}$ for the number of rational points on each abelian variety in~$\cI_h$.
The following are equivalent.
\begin{enumerate}[(a)]
	\item~$\cI_h$ is rich, that is, every abelian group of order~$N$ arises as~$A(\F_q)$ for some~$A\in \cI_h$.\label{part:all_groups}
	\item There is an abelian variety~$A\in\cI_h$ whose group of rational points has exponent~$\rad{N}$, that is, 
  \[A(\F_q)\cong \prod_{j = 1}^s \left(\frac{\Z}{\ell_j\Z}\right)^{e_j}.\]
  \label{part:rad_n_group}
  \item The coefficients of the characteristic polynomial~$h_{\frac{\rad N}{1-\pi}}(x)$ are integers. \label{part:int_coeffs}
	\item For all~$1\leq i\leq 2g$, we have
	$$
		\frac{h^{(i)}(1)}{i!}\cdot \ell_1^{i-e_1}\cdots \ell_s^{i-e_s} \in \Z.
	$$\label{part:integral_fraction}
\end{enumerate} 
If one of the equivalent conditions holds, then~$A(\F_q)$ has exponent~$\rad N$ for every~$A$ in~$\cI_h$ with maximal endomorphism ring. 
\end{thm}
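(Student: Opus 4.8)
The plan is to prove the equivalence of the four conditions in a cycle, and then derive the final ``In particular'' statement from the proof of one of the implications, most naturally from the part concerning condition \eqref{part:int_coeffs} combined with Proposition~\ref{prop:max_end_exp}.

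\textbf{Setup of the equivalences.} First I would translate condition \eqref{part:rad_n_group} into algebra. By Lemma~\ref{lem:GiangrecoMaidana}, for an abelian variety $A \in \cI_h$ with Frobenius $\pi$, the integer $d = \rad N$ annihilates $A(\F_q)$ if and only if $d/(1-\pi)$ lies in $\End(A)$ (equivalently, in the center, but here everything is commutative since $\cI_h$ is squarefree). Since $\rad N$ is the \emph{minimal} possible exponent for a group of order $N$, condition \eqref{part:rad_n_group} is exactly the assertion that some $A \in \cI_h$ has $\rad N \cdot A(\F_q) = 0$, i.e.\ some order $S = \End(A)$ in $K$ contains the element $\tfrac{\rad N}{1-\pi}$. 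But $\tfrac{\rad N}{1-\pi} \in \cO_K$ if and only if its minimal polynomial has integer coefficients, and since $K$ is étale with $\pi$ generating $K$ over $\Q$ (as $h$ is squarefree, $h = m_\pi = h_\pi$), the minimal and characteristic polynomials of $\tfrac{\rad N}{1-\pi}$ coincide. Hence $\tfrac{\rad N}{1-\pi}$ belongs to \emph{some} order iff it belongs to $\cO_K$ iff $h_{\frac{\rad N}{1-\pi}}(x) \in \Z[x]$, which is condition \eqref{part:int_coeffs}. This gives \eqref{part:rad_n_group} $\Leftrightarrow$ \eqref{part:int_coeffs}. The equivalence \eqref{part:int_coeffs} $\Leftrightarrow$ \eqref{part:integral_fraction} is a direct computation: Lemma~\ref{lem:coeff} with $\alpha = \pi$, $d = \rad N$, and $r = 2g$ gives the coefficient $a_i = \frac{(-1)^{2g+i}(\rad N)^{2g-i} h^{(2g-i)}(1)}{(2g-i)!\, h(1)}$; reindexing $j = 2g - i$ and using $h(1) = N = \prod \ell_j^{e_j}$ while $(\rad N)^{j} = \prod \ell_k^{j}$, the integrality of all $a_i$ is precisely the condition that $\frac{h^{(j)}(1)}{j!}\prod_k \ell_k^{j - e_k} \in \Z$ for $1 \le j \le 2g$ (the case $i = 2g$, i.e.\ $j = 0$, being automatic since $a_{2g} = 1$).

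\textbf{The implications with richness.} The implication \eqref{part:all_groups} $\Rightarrow$ \eqref{part:rad_n_group} is trivial: the group $\prod_j (\Z/\ell_j\Z)^{e_j}$ has order $N$, so if all such groups occur, this one does. The substantive direction is \eqref{part:rad_n_group} $\Rightarrow$ \eqref{part:all_groups}, and here I would invoke Rybakov's theorem (Theorem~\ref{thm:rybakov}). The group in \eqref{part:rad_n_group} is $(\Z/\ell_j\Z)^{e_j}$ in its $\ell_j$-part, which — as remarked right before Theorem~\ref{thm:rybakov} — has the \emph{highest} possible $\ell_j$-Hodge polygon among groups of order $\ell_j^{e_j}$. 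So if this maximal-Hodge-polygon group occurs in $\cI_h$, then by Rybakov its $\ell$-Hodge polygons lie on or below the $\ell$-Newton polygons of $h(1-t)$ for all $\ell$; since every other abelian group $G$ of order $N$ has $\ell$-Hodge polygons lying on or below these maximal ones, $G$ also satisfies Rybakov's criterion and hence occurs. This closes the cycle of equivalences.

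\textbf{The final statement.} For the ``In particular'' claim, suppose one of the equivalent conditions holds; then by \eqref{part:rad_n_group} there is some $A_0 \in \cI_h$ with $\rad N \cdot A_0(\F_q) = 0$. Now apply Proposition~\ref{prop:max_end_exp} with $d = \rad N$: for every maximal order $\cO$ in $K$ there is $B \sim A_0$ with $\End(B) \cong \cO$, and $\rad N$ annihilates $B(\F_q)$. Since $\#B(\F_q) = h(1) = N$ and a group of order $N$ annihilated by $\rad N$ must be exactly $\prod_j (\Z/\ell_j\Z)^{e_j}$ (as that is the unique abelian group of order $N$ with exponent dividing $\rad N$), every $A \in \cI_h$ with maximal endomorphism ring has $A(\F_q)$ of exponent $\rad N$, as claimed. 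I expect the main obstacle to be bookkeeping: carefully matching the indexing and signs in the Lemma~\ref{lem:coeff} computation so that \eqref{part:int_coeffs} $\Leftrightarrow$ \eqref{part:integral_fraction} comes out cleanly, and making sure the reduction of \eqref{part:rad_n_group} to membership in $\cO_K$ correctly uses that $\pi$ generates $K$ so that minimal and characteristic polynomials agree. The Rybakov step, while conceptually the heart of \eqref{part:rad_n_group} $\Rightarrow$ \eqref{part:all_groups}, is essentially an immediate consequence of the ``highest Hodge polygon'' observation once stated correctly.
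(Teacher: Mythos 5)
Your proposal is correct and takes essentially the same route as the paper's proof: \ref{part:all_groups}$\Leftrightarrow$\ref{part:rad_n_group} via Theorem~\ref{thm:rybakov} and the highest-Hodge-polygon observation, \ref{part:rad_n_group}$\Leftrightarrow$\ref{part:int_coeffs} via Lemma~\ref{lem:GiangrecoMaidana} and integrality of $\rad{N}/(1-\pi)$, \ref{part:int_coeffs}$\Leftrightarrow$\ref{part:integral_fraction} via Lemma~\ref{lem:coeff}, and the final claim via Proposition~\ref{prop:max_end_exp}. The one step to make explicit is that in \ref{part:int_coeffs}$\Rightarrow$\ref{part:rad_n_group} the phrase ``belongs to some order iff belongs to $\cO_K$'' needs the fact that $\cO_K$ is actually realized as $\End(A)$ for some $A\in\cI_h$, i.e.\ \cite[Thm.~3.13]{Wat69}, which the paper invokes at exactly this point and which you have implicitly through Proposition~\ref{prop:max_end_exp}.
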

  \begin{proof} 
    Trivially, \ref{part:all_groups} implies \ref{part:rad_n_group}. The reverse direction is a consequence of Theorem~\ref{thm:rybakov} because, among abelian groups of order~$N$, the group of exponent~$\rad N$ has the highest~$\ell$-Hodge polygon for every prime~$\ell$.
    If~$A(\F_q)$ has exponent~$\rad N$ for some~$A\in \cI_h$, then~$\rad N/(1-\pi)$ is an element of~$\End(A)$ by Lemma \ref{lem:GiangrecoMaidana}. 
    In particular, ~$\rad N/(1-\pi)$ is an integral element, so its minimal polynomial has integer coefficients. Since~$K = \Q[\pi]$, the characteristic and minimal polynomials~$h_{\frac{\rad N}{1-\pi}}(x) = m_{\frac{\rad N}{1-\pi}}(x)$ coincide. It follows that \ref{part:rad_n_group} implies \ref{part:int_coeffs}.

    Conversely, assume that the minimal polynomial of~$\rad N/(1-\pi)$ has integer coefficients. 
    Then~$\rad N/(1-\pi)$ is contained in the maximal order~$\cO_K$ of the endomorphism algebra~$K = \Q[x]/(h)$.
    By  \cite[Thm~3.13]{Wat69}, there is always at least one abelian variety~$A\in \cI_h$ whose endomorphism ring~$\End(A)$ is the maximal order~$\cO_K$ in~$K=\End(A)\otimes_\Z\Q=\Q(\pi)$.
    Therefore, by Lemma~\ref{lem:GiangrecoMaidana},~$A(\F_q)$ is killed by~$\rad N$, so \ref{part:int_coeffs} implies \ref{part:rad_n_group}.

    Recall that~$h(x) = h_{\pi}(x)$. Applying Lemma \ref{lem:coeff} with~$d=\rad{N}$, we see that the polynomial~$h_{\frac{\rad N}{1-\pi}}(x)$ is in~$\Z[x]$ if and only if
    \[\frac{h^{(i)}(1)}{i!}\frac{\rad{N}^{i}}{N} = \frac{h^{(i)}(1)}{i!}\cdot \ell_1^{i-e_1}\cdots \ell_s^{i-e_s} \in \Z \]
    for~$i=1,\ldots,2g$. Hence parts \ref{part:int_coeffs} and \ref{part:integral_fraction} are also equivalent.

    The final claim about varieties with maximal endomorphism ring follows from combining Proposition~\ref{prop:max_end_exp} and part \ref{part:rad_n_group}.
  \end{proof}

As an application of Theorem~\ref{thm:rich_condition}, we show the following.

\begin{cor}
  \label{cor:rich_sqfree_simple}
  A squarefree isogeny class is rich if and only if its simple factors are rich.
\end{cor}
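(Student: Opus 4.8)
The natural approach is to reduce to the combinatorial criterion in part \ref{part:integral_fraction} of Theorem~\ref{thm:rich_condition} and exploit the multiplicativity of characteristic polynomials over a product decomposition of the \'etale algebra. Write the squarefree isogeny class as~$\cI_h$ with~$h = \prod_{j=1}^t h_j$ where each~$h_j$ is the characteristic polynomial of a simple factor~$B_j$, so that the corresponding endomorphism algebra decomposes as~$K = \prod_{j=1}^t K_j$ with~$K_j = \Q[x]/(h_j)$, and the Frobenius~$\pi$ decomposes as~$\pi = (\pi_1,\dots,\pi_t)$. The key identity is that the element~$\tfrac{\rad N}{1-\pi} \in K$ has component~$\tfrac{\rad N}{1-\pi_j}$ in each~$K_j$, so its minimal polynomial over~$\Q$ is integral if and only if each component~$\tfrac{\rad N}{1-\pi_j}$ lies in~$\cO_{K_j}$, by the remark (recorded just before Lemma~\ref{lem:min_poly_mod}) that an element of an \'etale algebra is integral precisely when its minimal polynomial has integer coefficients, applied componentwise.

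\textbf{Key steps.} First, I would invoke the equivalence \ref{part:all_groups}~$\Leftrightarrow$~\ref{part:int_coeffs} from Theorem~\ref{thm:rich_condition}: richness of~$\cI_h$ is equivalent to~$\tfrac{\rad N}{1-\pi}$ being integral in~$K$, and richness of each simple factor~$\cI_{h_j}$ is equivalent to~$\tfrac{\rad N_j}{1-\pi_j}$ being integral in~$K_j$, where~$N_j = h_j(1)$. The main subtlety is that~$N = \prod_j N_j$, so the radical~$\rad N$ that appears globally need not equal any single~$\rad N_j$; one has~$\rad N = \operatorname{lcm}_j \rad N_j$ up to supports, and more precisely~$\rad N$ is the product of all primes dividing some~$N_j$. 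Thus I must compare integrality of~$\tfrac{\rad N}{1-\pi_j}$ (global radical) with integrality of~$\tfrac{\rad N_j}{1-\pi_j}$ (local radical) inside~$\cO_{K_j}$. The bridge is the elementary observation that, for a prime~$\ell$, the element~$\tfrac{\ell}{1-\pi_j}$ is~$\ell$-integral in~$\cO_{K_j}$ automatically when~$\ell \nmid N_j = \mathrm{N}_{K_j/\Q}(1-\pi_j)$ — indeed if~$\ell \nmid N_j$ then~$1-\pi_j$ is an~$\ell$-adic unit in~$\cO_{K_j}$, so~$\tfrac{1}{1-\pi_j}$ is already~$\ell$-integral. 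Combining these prime-by-prime: for each prime~$\ell \mid \rad N$, the~$\ell$-part of the condition ``$\tfrac{\rad N}{1-\pi_j}$ integral at~$\ell$'' is equivalent to ``$\tfrac{\ell}{1-\pi_j}$ integral at~$\ell$'', which is either automatic (if~$\ell\nmid N_j$) or equivalent to the~$\ell$-part of ``$\tfrac{\rad N_j}{1-\pi_j}$ integral at~$\ell$'' (if~$\ell\mid N_j$, since then~$\ell\mid \rad N_j$ and~$\rad N_j$ and~$\ell$ have the same~$\ell$-adic valuation, namely~$1$). Hence~$\tfrac{\rad N}{1-\pi_j}\in\cO_{K_j}$ for all~$j$ if and only if~$\tfrac{\rad N_j}{1-\pi_j}\in\cO_{K_j}$ for all~$j$, which is the desired equivalence.

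\textbf{Main obstacle.} The only real care needed is the bookkeeping around the primes: because~$\rad N$ may carry prime factors coming from a factor~$N_{j'}$ with~$j'\neq j$, one must check that such "foreign" primes never obstruct integrality of the~$j$-th component — this is exactly the unit observation above. I would phrase the argument localized at each prime~$\ell$ (equivalently, passing to~$\cO_{K_j}\otimes\Z_\ell$) to keep it clean. A slicker alternative, avoiding valuations entirely, is to note that~$A$ can be chosen with~$\End(A) = \cO_K = \prod_j \cO_{K_j}$ and that such an~$A$ decomposes (up to isogeny, with the same endomorphism ring) as~$\prod_j B_j$ with~$\End(B_j) = \cO_{K_j}$, so~$A(\F_q) \cong \prod_j B_j(\F_q)$ as groups by Theorem~\ref{thm:Gorenstein_general} applied to each factor; then~$A(\F_q)$ has exponent~$\rad N$ iff each~$B_j(\F_q)$ has exponent dividing~$\rad N$, and since~$B_j(\F_q)$ has order~$N_j$ its exponent divides~$\rad N$ iff it equals~$\rad N_j$, i.e. iff~$\cI_{h_j}$ is rich by the final claim of Theorem~\ref{thm:rich_condition}. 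Either route works; I would likely present the second, as it reuses the machinery already in place with minimal extra computation, but flag that the prime-counting route gives a purely polynomial-theoretic proof via part~\ref{part:integral_fraction}.
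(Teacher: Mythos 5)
Your proposal is correct, and the route you say you would actually present (pick $A$ with $\End(A)=\cO_K=\prod_j\cO_{K_j}$, split off simple factors $B_j$ with maximal endomorphism rings, identify $A(\F_q)\cong\prod_j B_j(\F_q)$ via the Gorenstein description, and reduce richness to the exponent being squarefree factor by factor) is essentially the paper's own proof of Corollary~\ref{cor:rich_sqfree_simple}. Your first route, through condition \ref{part:int_coeffs} of Theorem~\ref{thm:rich_condition}, is a sound alternative: the componentwise comparison of $\frac{\rad N}{1-\pi_j}$ with $\frac{\rad N_j}{1-\pi_j}$ works exactly as you say, since any denominator of $\frac{1}{1-\pi_j}$ lies over a prime dividing $N_j=N_{K_j/\Q}(1-\pi_j)$, so the ``foreign'' primes in $\rad N$ never obstruct integrality; this buys a purely algebraic argument about the element $\frac{\rad N}{1-\pi}$ at the cost of the prime-by-prime bookkeeping, whereas the paper's (and your preferred) argument gets the statement immediately from the lcm of exponents.
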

\begin{proof}
  Consider a squarefree isogeny class~$\cI_h$ over~$\F_q$ whose abelian varieties have~$N$ rational points. 
  Let~$A$ be an abelian variety in~$\cI_h$ with maximal endomorphism ring.
  By Theorem~\ref{thm:rich_condition}, the isogeny class~$\cI_h$ is rich if and only if the exponent of~$A(\F_q)$ is a squarefree integer, namely~$\rad N$. 
  
  Because~$\End(A)$ is maximal, ~$\End(A)\cong \prod_{j = 1}^r \cO_{K_j}$ where~$K_j = \Q[x]/(h_j)$ according to the factorization~$h = h_1\dots h_r$ into irreducible polynomials. 
  Moreover,~$A \cong A_1 \times \dots\times A_r$ where each~$A_j$ is simple and has maximal endomorphism ring~$\End(A_j) \cong \cO_{K_j}$.
  Observe that the exponent of~$A(\F_q) \cong \prod_{j = 1}^r A_j(\F_q)$ is the least common multiple of the exponents of~$A_j(\F_q)$ for~$1\leq j\leq r$. 
  Therefore, the exponent of~$A(\F_q)$ is a squarefree integer if and only if the exponent of~$A_j(\F_q)$ is a squarefree integer for all~$1\leq j\leq r$.
\end{proof}

  It is now straightforward to determine when a squarefree isogeny class is rich
  because Theorem~\ref{thm:rich_condition}.\ref{part:integral_fraction} provides a criterion which only involves computing the derivatives of the characteristic polynomial.
  Although Theorem~\ref{thm:coprime_iff_cyclic} provides a criterion for cyclcity in terms of conductor ideals, it is faster to use \cite[Thm.~2.2]{Giangreco-Maidana19}, which only requires computing one derivative of the characteristic polynomial.
  We exhibit some statistics over small finite fields in the following example.
  
  \begin{example}
  \label{ex:rich_vs_cyclic}
  Let~$\cI^{\text{sf}}(g,q)$ be the set of squarefree isogeny classes of dimension~$g$ over~$\F_q$, and let~$\cR(g,q)$ and~$\cC(g,q)$ be the subsets containing the rich and cyclic isogeny classes, respectively. 
   In Table~\ref{tab:rich_cyclic}, we collect statistics concerning the cardinalities of these sets for small values of~$g$ and~$q$. 
  
  \begin{table}[h!] \centering
  \begin{tabular}{|c|c|cccc|}
  \hline
   \multirow{2}{*}{$\F_q$} & \multirow{2}{*}{$g$} &~$\cR \setminus \cC$ &~$\cC \setminus \cR$ &~$ \cR\cap \cC$&~$\cI^{\text{sf}}\setminus (\cR \cup \cC)~$\\
 &  & Only Rich & Only Cyclic & Both & Neither\\ \hline
  \multirow{5}{*}{$\F_2$}
  & 1 & 0\% & 20.0\% & 80.0\% & 0\% \\
  & 2 & 3.45\% & 17.2\% & 75.9\% & 3.45\% \\
  & 3 & 8.66\% & 18.4\% & 66.0\% & 7.02\% \\
  & 4 & 10.5\% & 19.8\% & 61.0\% & 8.67\% \\
  & 5 & 10.7\% & 20.5\% & 58.6\% & 10.2\% \\
  & 6 & 10.0\% & 21.3\% & 58.5\% & 10.2\% \\
  \hline
  \multirow{4}{*}{$\F_3$}
  & 1 & 14.3\% & 0\% & 85.8\% & 0\% \\
  & 2 & 23.6\% & 5.45\% & 67.2\% & 3.64\% \\
  & 3 & 21.6\% & 8.38\% & 60.9\% & 9.17\% \\
  & 4 & 19.4\% & 9.31\% & 59.8\% & 11.5\% \\
  & 5 & 18.1\% & 9.83\% & 60.0\% & 12.1\% \\
  \hline
  \multirow{3}{*}{$\F_4$}
  & 1 & 0\% & 28.6\% & 71.4\% & 0\% \\
  & 2 & 11.9\% & 16.4\% & 61.2\% & 10.5\% \\
  & 3 & 13.1\% & 14.8\% & 60.1\% & 12.0\% \\
  & 4 & 12.9\% & 15.8\% & 60.3\% & 11.0\% \\
  \hline
  \multirow{3}{*}{$\F_5$}
  & 1 & 11.1\% & 11.1\% & 66.6\% & 11.1\% \\
  & 2 & 16.8\% & 9.25\% & 61.4\% & 12.6\% \\
  & 3 & 17.0\% & 10.4\% & 59.5\% & 13.1\% \\
  & 4 & 16.7\% & 10.4\% & 60.0\% & 12.9\% \\
  \hline
  \end{tabular} 
\caption{
  For~$2\leq q\leq 5$, we count the number of cyclic and rich squarefree isogeny classes of small dimension~$g$ over~$\F_q$ by applying \cite[Thm.~2.2]{Giangreco-Maidana19} and Theorem~\ref{thm:rich_condition} to data in the LMFDB \cite{LMFDB}. See also~\cite{LMFDB_paper}.
  \label{tab:rich_cyclic}
}
\end{table}
  An isogeny class is simultaneously rich and cyclic precisely when the properties are trivially satisfied, namely when the number of rational points~$N = \rad N$ is squarefree.  
  As a result, the intersection~$\cR(g,q) \cap \cC(g,q)$ can be considered the set of trivial examples.
  Asymptotically, the proportion of integers which are squarefree is~$6/\pi^2\approx 60\%$. 
\end{example}

\subsection{Groups with two generators}
To conclude this section, we consider the existence of abelian varieties whose groups of rational points are not cyclic, but are, locally, the product of only two cyclic factors.
As an application, we improve \cite[Thm.~3.3]{MarSpr_pts_PAMS} in the case of ordinary abelian varieties over~$\F_4$.

\begin{prop} 
\label{prop:exceptional_Fq}
  Let~$\ell$ be a prime and~$1\leq s_1\leq s_2$. If~$q\equiv 1 \bmod \ell^{s_1}$ is a prime power, then every squarefree isogeny class~$\cI_h$ over~$\F_q$ with~$\ord_{\ell}(h(1)) = s_1 + s_2$ contains an abelian variety~$A$ such that the~$\ell$-primary part of the group of rational points is
 ~$$
    A(\F_q)_\ell \cong (\Z/\ell^{s_1} \Z)\times (\Z/\ell^{s_2}\Z).
 ~$$
\end{prop}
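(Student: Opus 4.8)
The plan is to use Rybakov's theorem (Theorem~\ref{thm:rybakov}) to reduce the existence of the desired abelian variety $A$ to a purely combinatorial comparison of two polygons. We must produce some $A \in \cI_h$ whose group of rational points $G = A(\F_q)$ has $\ell$-primary part isomorphic to $(\Z/\ell^{s_1}\Z) \times (\Z/\ell^{s_2}\Z)$, while at every other prime $G$ is allowed to be arbitrary (for instance, we can take $G$ to have minimal exponent at the other primes). By Theorem~\ref{thm:rybakov}, such an $A$ exists if and only if the $m$-Hodge polygon of this candidate group $G$ lies on or below the $m$-Newton polygon of $h(1-t)$ for every prime $m$. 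At primes $m \neq \ell$ the choice of $G_m$ is flexible enough that the constraint is automatically satisfiable (e.g. the group of exponent $\rad(h(1))_m$ has the highest Hodge polygon, and choosing $G_m$ appropriately lower always works — concretely, one may invoke the same reasoning as in Theorem~\ref{thm:rich_condition} applied prime-by-prime, or simply note that there is \emph{some} valid group at $m$ and we only need to prescribe the $\ell$-part). So the entire content is at the prime $\ell$.

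First I would compute the $\ell$-Hodge polygon of $G_\ell = (\Z/\ell^{s_1}\Z)\times(\Z/\ell^{s_2}\Z)$ as a group of order $\ell^{s_1+s_2}$ sitting inside an abelian variety of dimension $g$: padding with zeros, the partition is $0 \leq \dots \leq 0 \leq s_1 \leq s_2$ with $2g$ parts, so by \cite[Def.~1.1]{Rybakov10} the vertices of the Hodge polygon are $(i, \sum_{j=1}^{2g-i} e_j)$; explicitly the polygon starts at $(0, s_1+s_2)$, drops to $(1, s_1)$, then to $(2,0)$, and stays at height $0$. Next I would identify the $\ell$-Newton polygon of $h(1-t)$. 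Writing $h(1-t) = \sum_i c_i t^i$, the Newton polygon is the lower convex hull of the points $(i, \ord_\ell(c_i))$. The key numerical facts I need are: $c_0 = h(1) $ has $\ord_\ell(c_0) = s_1 + s_2$; and $c_1 = -h'(1)$, for which I need $\ord_\ell(h'(1)) \geq s_1$. This last inequality is where the hypothesis $q \equiv 1 \bmod \ell^{s_1}$ enters, via the functional equation of $h$: since $h$ is the characteristic polynomial of Frobenius in a squarefree (hence real-root-free) isogeny class of dimension $g$, it satisfies $h(x) = x^{2g} h(q/x)/q^{g}$, and differentiating this relation and evaluating at $x=1$ gives a congruence expressing $h'(1)$ in terms of $h(1)$ and $q-1$; I expect to obtain something like $2h'(1) \equiv (2g)\,h(1) \cdot \tfrac{?}{?}$ — more precisely, logarithmic differentiation of $h(x) = \prod (x - \alpha_i)$ together with $\prod \alpha_i = q^g$ yields $\frac{h'(1)}{h(1)} = \sum \frac{1}{1-\alpha_i}$, and pairing $\alpha_i$ with $q/\alpha_i$ gives $\sum\frac{1}{1-\alpha_i} = \sum \frac{1 - \alpha_i - q/\alpha_i + 1}{\dots}$; this simplifies to show $h'(1)/h(1) = g + (\text{something divisible by } q-1)/h(1)$, hence $h'(1) \equiv g\, h(1) \pmod{(q-1)}$ in an appropriate sense, forcing $\ord_\ell(h'(1)) \geq \min(s_1+s_2, \ord_\ell(q-1)) \geq s_1$. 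I would then check that with $\ord_\ell(c_0) = s_1+s_2$ and $\ord_\ell(c_1) \geq s_1$, the Newton polygon lies on or above the Hodge polygon described above: its first segment from $(0, s_1+s_2)$ has endpoint at or above $(1, s_1)$, and since Newton polygons are convex and the coefficients $c_i$ are integers (so $\ord_\ell(c_i) \geq 0$), the polygon stays at height $\geq 0$ thereafter, dominating the Hodge polygon which is already at $0$ from $i=2$ onward.

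The main obstacle I anticipate is precisely the divisibility estimate $\ord_\ell(h'(1)) \geq s_1$: extracting this cleanly from the functional equation requires care about the parity of $2g$ and about whether $\ord_\ell(q-1) = \ord_\ell(q^{s_1/s_1} - 1)$ interacts correctly — one wants $q \equiv 1 \bmod \ell^{s_1} \Rightarrow \ell^{s_1} \mid (q-1) \Rightarrow \ell^{s_1} \mid h'(1) - g\,h(1)$, and then combine with $\ell^{s_1+s_2} \mid h(1)$ and $s_1 \leq s_1 + s_2$ to conclude $\ell^{s_1}\mid h'(1)$. A secondary point requiring attention is verifying that the Hodge polygon of our candidate $G_\ell$ is genuinely the \emph{maximal} one among groups of order $\ell^{s_1+s_2}$ and exponent $\ell^{s_2}$ with at most $2g$ parts whose structure we can realize — but since we want a \emph{specific} group rather than all groups, we only need its Hodge polygon to lie below the Newton polygon, which is exactly the computation above. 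Finally, I would remark that at the remaining primes $m \mid h(1)$ we simply pick any group $G_m$ whose $m$-Hodge polygon lies below the $m$-Newton polygon of $h(1-t)$ (such a group always exists, e.g. the cyclic one, since the constant term of $h(1-t)$ is $h(1)$ and Theorem~\ref{thm:rybakov} applied to the full isogeny class guarantees \emph{some} group of order $h(1)$ occurs), and assemble $G = \prod_m G_m$; applying Theorem~\ref{thm:rybakov} once more to this $G$ produces the desired $A$.
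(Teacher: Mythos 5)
Your proposal is correct and takes essentially the same route as the paper: reduce via Rybakov's theorem to showing the $\ell$-Newton polygon of $h(1-t)$ dominates the Hodge polygon with vertices $(0,s_1+s_2)$, $(1,s_1)$, $(2,0)$, which amounts to $\ord_\ell(h(1))=s_1+s_2$ and $\ord_\ell(h'(1))\geq s_1$. The paper obtains the key congruence $h'(1)\equiv g\,h(1) \pmod{\ell^{s_1}}$ directly from the $q$-symmetry of the coefficients of $h$, whereas you get it by logarithmic differentiation and pairing each root $\alpha$ with $q/\alpha$ (indeed $\tfrac{1}{1-\alpha}+\tfrac{1}{1-q/\alpha}=1-\tfrac{q-1}{(1-\alpha)(1-q/\alpha)}$, so $h'(1)=g\,h(1)-(q-1)\cdot(\text{a rational integer})$), which is the same computation in a different guise, so your sketched step does close.
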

\begin{proof} 
  The group ~$(\Z/\ell^{s_1} \Z)\times (\Z/\ell^{s_2}\Z)$ has~$\ell$-Hodge polygon defined by the points $(0, s_1+s_2), (1, s_1), (2,0)$.
  Therefore, using the notation~$h(1-x) = \sum_{j = 0}^{2g} b_j x^j$, it is enough show that~$\ord_\ell(b_0) \geq s_1+s_2$ and~$\ord_{\ell}(b_1)\geq s_1$ by Theorem~\ref{thm:rybakov}. The first holds by hypothesis because~$b_0 = h(1)$.

  By the~$q$-symmetry of~$h(x)$, we write
  \begin{align*}
  h(x) &= \left(\sum_{j = 1}^{g-1} a_{2g-j}(x^{2g - j} + q^{g-j} x^{j})\right) +  a_gx^g.
  \end{align*} 
  Because~$q\equiv 1\bmod \ell^{s_1}$, we deduce that~$2g - j + jq^{g-j} \equiv g(1 + q^{g-j})\bmod \ell^{s_1}$ for all~$0\leq j\leq g-1$. Combined with the fact that the polynomial~$(1-x)^j$ has linear coefficient~$-j$ for any~$j \geq 0$, we observe
  \begin{align*}
  -b_1 &= \left(\sum_{j = 1}^{g-1} a_{2g-j}(2g - j + jq^{g-j})\right) +  ga_g\\
  &\equiv \left(\sum_{j = 1}^{g-1} ga_{2g-j}(1 + q^{g-j})\right) +  ga_g\\
  &\equiv g h(1) \bmod \ell^{s_1}.
  \end{align*}
  Thus,~$\ord_\ell(b_1)\geq s_1$ because~$\ord_\ell(h(1))=s_1+s_2$, and we are done.
\end{proof}

\begin{cor} 
  \label{cor:divisible_by_4}
  Let~$q$ be an odd prime power. If~$\cI$ is a squarefree cyclic isogeny class over~$\F_q$, then its point count is not divisible by~$4$.
\end{cor}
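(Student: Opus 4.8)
The plan is to prove the contrapositive: assuming the common point count $N = h(1)$ of the isogeny class is divisible by~$4$, I will exhibit an abelian variety in the class with non-cyclic group of rational points. The only tool needed is Proposition~\ref{prop:exceptional_Fq}, applied at the prime $\ell = 2$.

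Write $\cI = \cI_h$. Since $q$ is odd, we have $q \equiv 1 \bmod 2$, so the congruence hypothesis of Proposition~\ref{prop:exceptional_Fq} holds with $\ell = 2$ and $s_1 = 1$. Put $s_2 = \ord_2(N) - 1$; the assumption $4 \mid N$ means $\ord_2(N) \geq 2$, so $1 = s_1 \leq s_2$ and $s_1 + s_2 = \ord_2(N) = \ord_2(h(1))$. Proposition~\ref{prop:exceptional_Fq} then yields an abelian variety $A \in \cI$ with
\[
  A(\F_q)_2 \cong (\Z/2\Z) \times (\Z/2^{s_2}\Z).
\]
As both factors are nontrivial, the $2$-primary part of $A(\F_q)$ requires two generators, hence $A(\F_q)$ is not cyclic. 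Therefore $\cI$ is not a cyclic isogeny class, which is exactly the contrapositive of the claim.

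I do not anticipate any real obstacle: the statement is essentially a specialization of Proposition~\ref{prop:exceptional_Fq}, and the only things to verify are the bookkeeping of $2$-adic valuations and the inequality $s_1 \leq s_2$, both immediate from $4 \mid N$. The one point worth recording is the role of the hypothesis that $q$ be odd: it is precisely what guarantees $q \equiv 1 \bmod 2$, the congruence condition required by Proposition~\ref{prop:exceptional_Fq}; when $q$ is even this fails, consistent with the existence of cyclic isogeny classes over $\F_2$ whose point count is divisible by~$4$.
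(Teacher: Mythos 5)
Your proof is correct and follows essentially the same route as the paper, which likewise applies Proposition~\ref{prop:exceptional_Fq} with $\ell = 2$ and $s_1 = 1$ to produce a non-cyclic variety when $4$ divides the point count. Your extra bookkeeping ($s_2 = \ord_2(N)-1 \geq s_1$ and the role of $q$ odd giving $q \equiv 1 \bmod 2$) just makes explicit what the paper leaves implicit.
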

\begin{proof} 
  If the point count is divisible by~$4$, then we apply Proposition \ref{prop:exceptional_Fq} with~$\ell = 2$ and~$s_1 = 1$ to find a non-cyclic variety.
\end{proof} 

\begin{cor}
\label{cor:exceptional_F4}
    For every~$s\geq 1$, the group~$\Z/3\Z\times \Z/3^s\Z$ arises as the group of rational points of a squarefree ordinary abelian variety over~$\F_4$.
\end{cor}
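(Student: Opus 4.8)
The plan is to deduce the statement from Proposition~\ref{prop:exceptional_Fq}. Since $4\equiv 1\bmod 3$, applying that proposition over $\F_4$ with $\ell=3$, $s_1=1$ and $s_2=s$ shows that \emph{any} squarefree isogeny class $\cI_h$ over $\F_4$ with $\ord_3(h(1))=s+1$ contains an abelian variety $A$ whose $3$-primary part is $A(\F_4)_3\cong\Z/3\Z\times\Z/3^s\Z$. Hence it suffices to exhibit, for every $s\geq 1$, a squarefree \emph{ordinary} isogeny class over $\F_4$ whose point count is \emph{exactly} $h(1)=3^{s+1}$: then $|A(\F_4)|=3^{s+1}$ forces $A(\F_4)\cong\Z/3\Z\times\Z/3^s\Z$, and $A$ is ordinary because ordinariness depends only on the isogeny class.

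To produce such classes I would use Weil polynomials of the form $h(x)=\prod_{j=1}^{d}(x^2-\beta_j x+4)$, where $\beta_1,\dots,\beta_d$ are the roots of a squarefree monic $g(t)\in\Z[t]$ all of whose roots are real and lie in the open interval $(-4,4)$. Such an $h$ lies in $\Z[x]$, is $q$-symmetric for $q=4$, and has all complex roots of absolute value $2$, so it is a Weil $4$-polynomial; its $2d$ roots are distinct, since each quadratic factor has two conjugate non-real roots because $|\beta_j|<4$, so $\cI_h$ is squarefree. Reducing modulo $2$ one gets $h(x)\equiv x^{d}\,\overline{g}(x)\pmod 2$, so the middle coefficient of $h$ is congruent to $g(0)$ mod $2$; by $q$-symmetry the remaining coefficient inequalities needed for the ordinary $2$-adic Newton polygon are automatic, so $\cI_h$ is ordinary precisely when $g(0)$ is odd. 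Finally $h(1)=\prod_j(5-\beta_j)=g(5)$. Thus everything is reduced to the arithmetic task: \emph{for each $n\geq 2$, find a squarefree monic $g\in\Z[t]$ with all roots in $(-4,4)$, with $g(0)$ odd, and with $g(5)=3^{n}$.}

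For small $n$ this is explicit: $g=t^2-t-11$ and $g=t^2+t-3$ handle $n=2,3$ (the corresponding surfaces have characteristic polynomials $x^4-x^3-3x^2-4x+16$ and $x^4+x^3+5x^2+4x+16$), and $g=t^3-9t+1$ handles $n=4$, giving $h=x^6+3x^4+x^3+12x^2+64$. In general I would combine such blocks: products of admissible $g$'s with pairwise disjoint root sets are again admissible, the values at $5$ multiply, and $g(0)$ stays odd. To obtain enough blocks one uses that the interval $[-4,4]$, having length $8$, contains the complete conjugate set of infinitely many algebraic integers (a classical result of Robinson), hence is the root locus of infinitely many irreducible monic integer polynomials of every even degree, giving a fresh supply of building blocks in every dimension $\geq 2$; note that all simple factors must anyway have dimension $\geq 2$, since no ordinary elliptic curve over $\F_4$ has a point count that is a power of $3$.

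The crux is this last step. The finitely many small blocks above realize $3^{n}$ only for $n$ in a set with bounded gaps, so one genuinely needs an infinite family, and must control simultaneously that $g$ has all roots inside $(-4,4)$, that $g(0)$ is odd, and that $g(5)$ equals the prescribed power of $3$. I expect the main difficulty to be engineering this family explicitly — for instance by a controlled perturbation of a Dickson/Chebyshev polynomial whose roots are already packed into $(-4,4)$, by an explicit parametrized construction, or by an equidistribution/covering argument for the values at $5$ of monic integer polynomials with roots confined to $[-4,4]$.
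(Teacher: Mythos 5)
Your first step is exactly the paper's: since $4\equiv 1\bmod 3$, Proposition~\ref{prop:exceptional_Fq} with $\ell=3$, $s_1=1$, $s_2=s$ reduces the corollary to exhibiting, for each $s\geq 1$, a squarefree \emph{ordinary} isogeny class over $\F_4$ with $h(1)=3^{s+1}$ exactly (so that the $3$-primary part is the whole group). That reduction is correct. The problem is the second half: the paper disposes of the existence question with a citation --- Theorem~1.13 and Remark~1.16 of \cite{vBCLPS21} guarantee ordinary (squarefree) isogeny classes over $\F_4$ with any prescribed point count $3^{s+1}$, $s\geq 1$ --- whereas you attempt a direct construction and, as you yourself acknowledge, do not complete it. Your explicit blocks handle only $s=1,2,3$ (and those computations check out: $g(5)=9,27,81$, $g(0)$ odd, roots in $(-4,4)$, and the resulting $h$ are correct), but for general $s$ you are left with the genuinely hard Diophantine requirement of a squarefree monic $g\in\Z[t]$ with all roots in $(-4,4)$, $g(0)$ odd, and $g(5)=3^{s+1}$. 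Nothing in your sketch addresses this: Robinson's theorem supplies infinitely many totally real blocks with conjugates in $[-4,4]$, but gives no control whatsoever on the value $g(5)$, and since you cannot repeat a block (squarefreeness), multiplicativity of values at $5$ over finitely many known blocks only reaches finitely many exponents. So the proposal has a genuine gap at precisely the point where the paper leans on the literature.

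Two smaller remarks. First, your ordinarity criterion is fine but can be stated more directly with the paper's definition: ordinarity means the middle coefficient of $h$ is odd, and $h\equiv x^{d}\,\overline{g}(x)\pmod 2$ gives middle coefficient $\equiv g(0)$, so no Newton-polygon discussion is needed. Second, if you want to salvage a self-contained proof rather than cite \cite{vBCLPS21}, the honest statement of what remains is exactly the value-realization problem above; solving it from scratch is essentially re-proving (a special case of) the result of van Bommel--Costa--Li--Poonen--Smith, so citing it, as the paper does, is the efficient route.
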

\begin{proof}
    Apply Proposition \ref{prop:exceptional_Fq} with~$\ell = 3$,~$q = 4$,~$s_1=1$ and~$s_2=s$.  The existence of ordinary isogeny classes with the desired number of points follows from Theorem 1.13 and Remark 1.16  in \cite{vBCLPS21}.
\end{proof}

For every~$N \geq 1$, there is an abelian variety~$A$ over~$\F_4$ with~$A(\F_4)\cong (\Z/N\Z)$, and~$A$ can be taken to be ordinary if and only if~$N \neq 3$. 
In particular, by taking products, this shows that every finite abelian group arises as the group of rational points of an abelian variety over~$\F_4$ which is not necessarily ordinary; see \cite[Thm.~3.3]{MarSpr_pts_PAMS}.
Corollary~\ref{cor:exceptional_F4} extends \cite[Thm.~3.3]{MarSpr_pts_PAMS} by proving that the abelian variety can be taken to be ordinary in additional non-cyclic cases.
We record the improved theorem here.

\begin{thm}
  \label{thm:MS_improved}
  Every finite abelian group~$G$ arises as the group of rational points $G \cong A(\F_q)$ for an abelian variety~$A$ over~$\F_4$. Moreover,~$A$ can be taken to be ordinary, except possibly if~$G = (\Z/3\Z)^n$ for an odd integer~$n$.
\end{thm}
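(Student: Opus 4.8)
The plan is to assemble Theorem~\ref{thm:MS_improved} from the results already in hand, namely \cite[Thm.~3.3]{MarSpr_pts_PAMS} together with Corollary~\ref{cor:exceptional_F4}, by reducing an arbitrary finite abelian group to a product of cyclic $\ell$-groups and handling the one obstruction at $\ell = 3$. Concretely, write $G \cong \prod_{\ell} G_\ell$ as the product of its $\ell$-primary parts, and further decompose each $G_\ell \cong \prod_{i} \Z/\ell^{e_{\ell,i}}\Z$ into cyclic factors. Since a product of abelian varieties realizes the product of the corresponding groups of rational points, and since a product of ordinary abelian varieties is ordinary, it suffices to realize each cyclic factor $\Z/\ell^e\Z$ — and, when convenient, certain small products of such factors — as $A(\F_4)$ for an ordinary abelian variety $A$ over $\F_4$.

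First I would recall that \cite[Thm.~3.3]{MarSpr_pts_PAMS} already gives, for every $N \geq 1$, an abelian variety over $\F_4$ with cyclic group of points of order $N$, which can be taken ordinary precisely when $N \neq 3$. Thus every cyclic factor $\Z/\ell^e\Z$ with $\ell^e \neq 3$ is realized ordinarily, and the only troublesome cyclic factor is $\Z/3\Z$ itself. So the issue is exactly how many factors of $\Z/3\Z$ appear in the decomposition of $G_3$: if the $3$-part is trivial or has exponent larger than $3$ in at least one slot, we can pair things up advantageously. Precisely, suppose $G_3 \cong \prod_{i=1}^{m}\Z/3^{e_i}\Z$ with $1 \le e_1 \le \cdots \le e_m$. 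If $m = 0$ there is nothing to do. If some $e_i \geq 2$, or more generally if $G_3$ is not of the form $(\Z/3\Z)^m$, then we can realize $G_3$ ordinarily: either realize each factor $\Z/3^{e_i}\Z$ with $e_i \geq 2$ directly via \cite[Thm.~3.3]{MarSpr_pts_PAMS} and absorb the factors $\Z/3\Z$ in pairs or by attaching them to a larger factor using Corollary~\ref{cor:exceptional_F4}, which supplies an ordinary abelian variety over $\F_4$ with group of points $\Z/3\Z \times \Z/3^s\Z$ for every $s \geq 1$. Taking $s = 1$ handles a pair $\Z/3\Z \times \Z/3\Z$; taking $s = e_i$ for the largest $e_i \geq 2$ lets one $\Z/3\Z$ tag along with a larger factor; and if the number of $\Z/3\Z$-factors is even they pair up completely. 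Combining these, $G_3$ is realized ordinarily unless $G_3 \cong (\Z/3\Z)^n$ with $n$ odd and no larger $3$-factor available to absorb the leftover copy — which is exactly the stated exceptional case.

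The bookkeeping step is therefore the main (and only) obstacle, and it is genuinely mild: one must check that whenever $G_3$ is not $(\Z/3\Z)^{\text{odd}}$ there is a way to group the cyclic $3$-factors so that every $\Z/3\Z$ is either paired with another $\Z/3\Z$ (use Corollary~\ref{cor:exceptional_F4} with $s=1$) or joined to some $\Z/3^{e}\Z$ with $e \geq 2$ (use Corollary~\ref{cor:exceptional_F4} with $s = e$), while all remaining factors $\Z/3^{e}\Z$ with $e \geq 2$ are realized individually and ordinarily by \cite[Thm.~3.3]{MarSpr_pts_PAMS}. A short case analysis on the parity of the number of $\Z/3\Z$-summands and on whether a larger $3$-factor exists completes this. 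Finally, the non-$3$ part $\prod_{\ell \neq 3} G_\ell$ is realized ordinarily factor-by-factor by \cite[Thm.~3.3]{MarSpr_pts_PAMS} since no cyclic factor there has order $3$; taking the product of all the abelian varieties so constructed gives an ordinary abelian variety over $\F_4$ with group of rational points $G$, establishing the theorem outside the excluded family $G \cong (\Z/3\Z)^n$, $n$ odd. (That this family is genuinely excluded — i.e., that no ordinary $A/\F_4$ has $A(\F_4) \cong (\Z/3\Z)^n$ for odd $n$ — is the content already recorded in \cite[Thm.~3.3]{MarSpr_pts_PAMS} and need not be reproved here.)
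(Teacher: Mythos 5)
Your overall strategy is essentially the paper's: realize cyclic factors ordinarily via \cite[Thm.~3.3]{MarSpr_pts_PAMS} and use Corollary~\ref{cor:exceptional_F4} to attach a troublesome $\Z/3\Z$ to another $3$-power factor. However, there is a genuine gap in the final bookkeeping. Your construction treats the $3$-primary part $G_3$ entirely separately from the rest of $G$, so the family your argument fails to cover is ``$G_3\cong(\Z/3\Z)^n$ with $n$ odd'', which is strictly larger than the theorem's exceptional family ``$G\cong(\Z/3\Z)^n$ with $n$ odd''. For example, $G=\Z/3\Z\times\Z/5\Z$ or $G=(\Z/3\Z)^3\times\Z/2\Z$ must be realized by an ordinary abelian variety according to the statement, but your case analysis leaves one copy of $\Z/3\Z$ unpaired with no larger $3$-factor to absorb it, and you never carry out the coprime recombination that would rescue it (your opening nod to ``certain small products of such factors'' is not used anywhere in the actual argument). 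The sentence identifying ``$G_3\cong(\Z/3\Z)^n$, $n$ odd, with no larger $3$-factor'' with ``exactly the stated exceptional case'' is precisely where $G_3$ gets conflated with $G$.

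The repair is easy and can go either of two ways. One option is the paper's: quote the stronger assertion already contained in \cite[Thm.~3.3]{MarSpr_pts_PAMS}, namely that ordinarity can fail only for $3$-groups of the form $(\Z/3\Z)^{n_1}\times\prod_{j>1}(\Z/3^j\Z)^{n_j}$ with $n_1$ odd, and then apply Corollary~\ref{cor:exceptional_F4} once (as you do) to eliminate those exceptional groups having some $n_s\geq 1$ with $s>1$, leaving only $(\Z/3\Z)^n$ with $n$ odd. The other option is to keep your factor-by-factor construction but add the missing step: when $G_3\cong(\Z/3\Z)^n$ with $n$ odd and $G$ has a nontrivial part of order coprime to $3$, absorb the leftover $\Z/3\Z$ into a coprime cyclic factor $\Z/M\Z$, obtaining a cyclic group of order $3M\neq 3$ which \cite[Thm.~3.3]{MarSpr_pts_PAMS} realizes ordinarily. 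Finally, your closing parenthetical overreaches: the theorem only asserts ``except possibly,'' so nothing needs to be excluded, and \cite[Thm.~3.3]{MarSpr_pts_PAMS} only records the impossibility for the cyclic group of order $3$ (the case $n=1$), not for $(\Z/3\Z)^n$ with odd $n\geq 3$.
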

\begin{proof}
  It is already established by \cite[Thm.~3.3]{MarSpr_pts_PAMS} that every finite abelian group occurs as the group of rational points of an abelian variety over~$\F_4$, and that the abelian variety can be taken to be ordinary except possibly when~$G$ takes the form~$(\Z/3\Z)^{n_1} \times \prod_{j >1} (\Z/3^j\Z)^{n_j}$, where~$n_1$ is odd. 
  Now consider one of the exceptional groups~$G$ where additionally~$n_s \geq 1$ for some~$s > 1$.
  There are ordinary abelian varieties~$A_1$ and~$A_2$ over~$\F_4$ whose groups of rational points are ~$$A_1(\F_4)\cong (\Z/3\Z)\times (\Z/3^{s}),$$~$$A_2(\F_4) \cong (\Z/3\Z)^{n_1 - 1}\times (\Z/3^s/\Z)^{n_s-1}\times \prod_{j\not\in \{1,s\}} (\Z/3^j\Z)^{n_j}$$
  by Corollary~\ref{cor:exceptional_F4} and \cite[Thm.~3.3]{MarSpr_pts_PAMS}, respectively.
  Therefore,~$G\cong (A_1\times A_2)(\F_4)$, as desired.
\end{proof}

\section{Groups of rational points and categorical equivalences}\
\label{sec:ord-cs}
In this section, we first present Theorem~\ref{thm:cat_eq}, which describes groups of rational points by deploying a categorical equivalence between abelian varieties in certain squarefree isogeny classes and fractional ideals in \'etale algebras.
In such an isogeny class, we deduce in Proposition~\ref{prop:A_cmtype2_gp} that every abelian variety~$A$ with endomorphism ring~$\End(A) = S$ has the same group of rational points if~$S$ satisfies a certain local condition.
In Remark~\ref{rem:gor_vs_cmtype2}, we compare this result to Corollary~\ref{cor:Gorenstein}.

Throughout this section we use the usual notation.
We denote by~$\cI_h$ a squarefree isogeny class over~$\F_q$. 
We set~$K=\Q[x]/(h)$ and~$R=\Z[\pi,\bar \pi]$ where~$\pi$ is the class of~$x$ in~$K$.

\begin{df}
  We say that~$\cI_h$ satisfies
  \begin{itemize}
    \item {\bf Ord} if~$\cI_h$ is ordinary.
    \item {\bf CS} if~$q$ is prime.
  \end{itemize}
\end{df}
\begin{thm} 
  \label{thm:cat_eq}
  If~$\cI_h$ satisfies {\bf Ord} (resp.~{\bf CS}) then there exists a covariant (resp.~contravariant) equivalence between~$\cI_h$ and the category of fractional~$R$-ideals (with $R$-linear morphisms).
  Denote by~$\cF$ be the functor inducing the equivalence.
  Let~$A$ be an abelian variety in~$\cI_h$, with dual variety~$A^\vee$.
  Put~$\cF(A)=I$, where~$I$ is a fractional~$R$-ideal.
  \begin{enumerate}[(a)]
    \item \label{thm:cat_eq:Avee} We have~$\cF(A^\vee) = \bar{I}^t$ and~$\End(A^\vee)=\bar{\End(A)}$.
    \item \label{thm:cat_eq:grp_pts_A} There is a~$\Z$-linear isomorphism 
    \[ A(\F_{q^n}) \cong \frac{I}{(1-\pi^n)I},\]
    for all~$n \geq1$.
    \item \label{thm:cat_eq:grp_pts_Avee} There are~$\Z$-linear isomorphisms 
    \[ A^\vee(\F_{q^n}) 
      \cong \frac{\bar{I}^t}{(1-\pi^n)\bar{I}^t}
      \cong \frac{I^t}{(1-\bar \pi^n)I^t} 
      \cong \frac{I}{(1-\bar{\pi}^n)I} 
      \cong \frac{\bar I}{(1-\pi^n)\bar I} ,\]
    for all~$n \geq1$.
  \end{enumerate}
\end{thm}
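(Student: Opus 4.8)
The plan is to assemble the equivalences of Deligne and of Centeleghe--Stix together with the descriptions of the dual variety and of the point counts that are already available in the literature, and then to deduce part~\ref{thm:cat_eq:grp_pts_Avee} by an elementary manipulation. First I would recall the equivalences. Under {\bf Ord}, Deligne's theorem \cite{Del69} provides a covariant equivalence between the category of ordinary abelian varieties over~$\F_q$ and a category of finite free~$\Z$-modules equipped with a ``Frobenius'' endomorphism~$F$ and its ``Verschiebung''~$V=q/F$; restricting this equivalence to the squarefree isogeny class~$\cI_h$ makes~$F$ and~$V$ act as multiplication by~$\pi$ and~$\bar\pi=q/\pi$, so that the modules occurring are exactly the fractional~$R$-ideals in~$K$, as worked out in detail in \cite{MarAbVar18}. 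Under {\bf CS}, the characteristic polynomial~$h$ has no real roots because~$\cI_h$ is squarefree (as noted in the introduction), so the Centeleghe--Stix equivalence \cite{CentelegheStix15} applies, and its analogous restriction to~$\cI_h$, again described in \cite{MarAbVar18}, gives a contravariant equivalence with the category of fractional~$R$-ideals. In either case, the morphisms being~$R$-linear (and~$K$ being commutative) forces~$\End(A)=(\cF(A):\cF(A))$.

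For parts~\ref{thm:cat_eq:Avee} and~\ref{thm:cat_eq:grp_pts_A} I would again appeal to \cite{MarAbVar18}: there it is shown that, under the functor~$\cF$, the dual variety corresponds to the trace dual of the conjugate ideal, $\cF(A^\vee)=\bar{I}^t$, reflecting the facts that the Rosati involution corresponds to complex conjugation on~$K$ and that the duality~$A\mapsto A^\vee$ corresponds to~$L\mapsto L^t$ up to that conjugation. Since complex conjugation is a~$\Q$-algebra automorphism of~$K$, it preserves the trace form and hence commutes with~$L\mapsto L^t$; combining this with the identity~$(I^t:I^t)=(I:I)$ from Lemma~\ref{lemma:trace} gives
\[
  \End(A^\vee)=(\bar{I}^t:\bar{I}^t)=\overline{(I^t:I^t)}=\overline{(I:I)}=\bar{\End(A)},
\]
recovering the identity recorded in the introduction. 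For part~\ref{thm:cat_eq:grp_pts_A}, it is shown in~\cite{MarAbVar18} that the~$\bar\F_q$-points of~$\ker(1-\pi^n)$ are isomorphic to~$\cF(A)/(1-\pi^n)\cF(A)$; since~$A(\F_{q^n})=A[1-\pi^n]$ and~$1-\pi^n$ is a non-zero-divisor of~$K$ (the complex absolute values of~$\pi^n$ equal~$q^{n/2}\neq 1$, as~$\pi$ is a Weil number), this yields the stated~$\Z$-linear isomorphism~$A(\F_{q^n})\cong I/(1-\pi^n)I$.

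Part~\ref{thm:cat_eq:grp_pts_Avee} then follows formally. Applying part~\ref{thm:cat_eq:grp_pts_A} to~$A^\vee$ and substituting~$\cF(A^\vee)=\bar{I}^t$ from part~\ref{thm:cat_eq:Avee} gives~$A^\vee(\F_{q^n})\cong \bar{I}^t/(1-\pi^n)\bar{I}^t$, the first term. Complex conjugation sends a fractional ideal~$J$ to~$\bar J$, sends~$\pi$ to~$\bar\pi$, and commutes with~$L\mapsto L^t$, so it induces a~$\Z$-linear isomorphism onto~$I^t/(1-\bar\pi^n)I^t$ (using~$\overline{\bar{I}^t}=I^t$), the second term. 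Next,~$1-\bar\pi^n\in R$ is again a non-zero-divisor, so Lemma~\ref{lemma:matlisduality} yields a~$\Z$-linear isomorphism~$I^t/(1-\bar\pi^n)I^t\cong I/(1-\bar\pi^n)I$, the third term. Finally, complex conjugation once more carries this onto~$\bar{I}/(1-\pi^n)\bar{I}$, the fourth term, completing the chain.

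The argument is largely a compilation, so the part requiring care is the bookkeeping: tracking the covariant (Deligne) versus contravariant (Centeleghe--Stix) nature of~$\cF$ so that both~$\cF(A^\vee)=\bar{I}^t$ and the point-count formula are transported with the correct conjugates in the two cases, which is exactly where one relies on the normalizations established in~\cite{MarAbVar18}. The only genuinely nontrivial internal ingredient is Lemma~\ref{lemma:matlisduality} (Matlis duality), which provides the middle, non-$R$-linear isomorphism in part~\ref{thm:cat_eq:grp_pts_Avee}; all the other steps are conjugation symmetry.
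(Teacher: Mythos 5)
Your proposal is correct and follows essentially the same route as the paper: cite the Deligne and Centeleghe--Stix equivalences restricted to $\cI_h$, quote the known descriptions of $\cF(A^\vee)$ and of $A[1-\pi^n]$ for parts \ref{thm:cat_eq:Avee} and \ref{thm:cat_eq:grp_pts_A}, and obtain part \ref{thm:cat_eq:grp_pts_Avee} by combining these with complex conjugation and Lemma~\ref{lemma:matlisduality} (Matlis duality) for the middle isomorphism. The only cosmetic difference is a citation detail (the paper uses \cite{BergKarMar21} for part \ref{thm:cat_eq:Avee} in the {\bf CS} case) and your extra derivation of $\End(A^\vee)=\bar{\End(A)}$ via colon identities, both of which are harmless.
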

\begin{proof}
  The existence of the equivalence is given by \cite{Del69} in the {\bf Ord} case, and by \cite{CentelegheStix15} in the {\bf CS} case.
  Part \ref{thm:cat_eq:Avee} is \cite[Thm.~5.2]{MarAbVar18} in the {\bf Ord} case, and \cite[Cor~3.26]{BergKarMar21} in the {\bf CS} case.
  Part \ref{thm:cat_eq:grp_pts_A}, for~$n=1$, is \cite[Cor.~4.7]{MarAbVar18} for 
  both cases, but the proof is identical for~$n>1$.
  In the {\bf Ord} case, the key ingredients are \cite[Lem.~4.13 and Prop.~4.14]{Howe95}, while the proof in the {\bf CS} case uses a local argument.

  For Part \ref{thm:cat_eq:grp_pts_Avee}, observe that
  the first~$\Z$-linear isomorphism is the combination of Parts \ref{thm:cat_eq:Avee} and \ref{thm:cat_eq:grp_pts_A}, while the second and fourth are the application of complex conjugation. 
  For the third isomorphism, we use Lemma~\ref{lemma:matlisduality} to deduce
  \[\frac{I^t}{(1-\bar{\pi}^n)I^t}
                      \cong \frac{(1-\bar{\pi}^n)^{-1}I}{I}
                      \cong \frac{I}{(1-\bar{\pi}^n)I}. \]
\end{proof}

\begin{remark}  
  We emphasize that the hypotheses for Theorem~\ref{thm:cat_eq} only impose conditions on the isogeny class over the base field~$\F_q$.
  Observe that property of being squarefree is not stable under base extension, and the functor we invoke in the {\bf CS} case requires the base field to be prime. 
  Nevertheless, we describe~$A(\F_{q^n})$ for all~$n\geq 1$, 
  because~$A(\F_{q^n}) = \ker(1-\pi^n)$ is the kernel of an isogeny defined over the base field~$\F_q$.
\end{remark}

\begin{remark}
  For~$n=1$, the~$\Z$-linear isomorphisms in Theorem~\ref{thm:cat_eq} are trivially~$R$-linear.
  Indeed, for part~\ref{thm:cat_eq:grp_pts_A} since~$R$ is generated over~$\Z$ by~$\pi$ and~$\bar \pi=q/\pi$, and~$I/(1-\pi)I$ is annihilated by~$(1-\pi)$,~$R$-linearity trivially follows from~$\Z$-linearity. 
  The same applies for part~\ref{thm:cat_eq:grp_pts_Avee}.
\end{remark}

Now we show that, in the {\bf Ord} and {\bf CS} cases, the group of rational points is uniquely determined by the endomorphism ring under certain conditions.

\begin{prop}\label{prop:A_cmtype2_gp}
  Let~$A$ be an abelian variety in a squarefree isogeny class~$\cI_h$ over~$\F_q$ satisfying {\bf Ord} or {\bf CS}.
  Write~$S = \End(A)$.
  For each~$n\geq1$, if~$\type_\p(S)\leq 2$
  for every prime~$\p$ of~$S$ above~$(1-\pi^n)$,
  then the group of~$\F_{q^n}$-rational points of~$A$ is uniquely determined by~$S$. 
  Specifically,
  \[ 
    A(\F_{q^n})\cong \frac{S}{(1-\pi^n) S}
  \]
 are isomorphic as~$\Z$-modules.
\end{prop}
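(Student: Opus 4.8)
The plan is to combine the categorical equivalence of Theorem~\ref{thm:cat_eq} with the local structure theory of fractional ideals over an order of Cohen--Macaulay type at most $2$, namely Proposition~\ref{prop:cmtype_at_most_2_at_p}. First I would invoke Theorem~\ref{thm:cat_eq} to produce a fractional $R$-ideal $I = \cF(A)$ together with the $\Z$-linear isomorphism $A(\F_{q^n}) \cong I/(1-\pi^n)I$ valid for all $n \geq 1$. Here the hypothesis {\bf Ord} or {\bf CS} is exactly what is needed for the functor $\cF$ to exist and for $I$ to land among fractional $R$-ideals.

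The key point is then to identify the multiplicator ring of $I$. Under the equivalence, $R$-linear endomorphisms of $I$ correspond to $\F_q$-endomorphisms of $A$, so $(I:I) = \End(A) = S$; more precisely, $I$ is a fractional $S$-ideal with $(I:I) = S$, hence certainly $(I:I)_\p = S_\p$ for every prime $\p$ of $S$. (Strictly, I would note that $I$ is a fractional $R$-ideal a priori, but $S$ contains $R$ and $(I:I) = S$ makes $I$ a fractional $S$-ideal; the localizations are then taken over $S$.) Now apply Proposition~\ref{prop:cmtype_at_most_2_at_p} with the non-zero-divisor $r = 1-\pi^n$: the hypothesis that $\type_\p(S) \leq 2$ for every prime $\p$ of $S$ containing $1-\pi^n$ gives a $\Z$-linear isomorphism
\[
  \frac{I}{(1-\pi^n)I} \cong \frac{S}{(1-\pi^n)S}.
\]
Chaining this with the isomorphism from Theorem~\ref{thm:cat_eq} yields $A(\F_{q^n}) \cong S/(1-\pi^n)S$ as $\Z$-modules, which is the claim.

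The only genuine subtlety — and the step I would be most careful about — is making sure that the primes $\p$ of $S$ lying above $(1-\pi^n)$ as stated in the proposition are precisely the primes of $S$ containing the element $r = 1-\pi^n$, so that Proposition~\ref{prop:cmtype_at_most_2_at_p} applies verbatim; this is just unwinding the phrase ``above $(1-\pi^n)$'' but it should be stated explicitly. A second point worth a sentence is that $1-\pi^n$ is a non-zero-divisor in $S$: this holds because $S$ is an order in the \'etale algebra $K$ and $1-\pi^n \in K^\times$ (equivalently, $1$ is not a root of $h(x^{1/n})$-type factors, i.e. $\pi^n \neq 1$ in $K$, which follows since $A$ has finitely many rational points and hence $h_A(1) = \#A(\F_{q^n})$-type quantities are nonzero), so multiplication by it is injective on the lattice $I$. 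Everything else is a direct citation, so the proof is short; the real content has already been packaged into Theorem~\ref{thm:cat_eq} and Proposition~\ref{prop:cmtype_at_most_2_at_p}.
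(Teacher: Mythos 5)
Your proposal is correct and follows essentially the same route as the paper, whose proof is precisely the combination of Theorem~\ref{thm:cat_eq} with Proposition~\ref{prop:cmtype_at_most_2_at_p}; the extra points you flag (identifying $(I:I)=\End(A)=S$ under the equivalence, reading ``above $(1-\pi^n)$'' as containing the element, and noting that $1-\pi^n$ is a non-zero-divisor since $\pi^n\neq 1$ in every component of $K$) are exactly the details the paper leaves implicit.
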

\begin{proof}
  The statement follows from Proposition~\ref{prop:cmtype_at_most_2_at_p} with Theorem~\ref{thm:cat_eq}.
\end{proof}

\begin{remark}
  \label{rem:gor_vs_cmtype2}
  As noted in Section~\ref{sec:frac_ideals}, an order~$S$ is Gorenstein at a prime~$\p$ if and only if~$\type_\p(S)=1$.
  Hence Proposition~\ref{prop:A_cmtype2_gp} is a generalization of Corollary~\ref{cor:Gorenstein}.
  As a trade-off, the isomorphism is~$\Z$-linear rather than~$S$-linear, and we need further hypotheses on the isogeny class.
\end{remark}

In Proposition~\ref{prop:A_cmtype2_gp},
it is important that the local type of the endomorphism ring is at most 2 at primes above~$1-\pi^n$.
Indeed, in Example \ref{ex:diff_grps_same_end}, we produce abelian varieties with the same endomorphism ring of (local) type~$3$ but non-isomorphic groups of points. 

\begin{example}\label{ex:diff_grps_same_end}  
  The polynomial 
  \[ h=x^4 + 6x^2 + 25=(x^2 - 2x + 5)(x^2 + 2x + 5) \]
  defines an isogeny class~$\cI_h$ of ordinary abelian surfaces over~$\F_5$, which has label
  \href{http://www.lmfdb.org/Variety/Abelian/Fq/2/5/a_g}{2.5.a\_g} on the LMFDB \cite{LMFDB}.
  Consider the order~$S=\Z+2\cO_K$.
  This is the unique overorder of~$R$ with~$[\cO_K:S]=[S:R]=8$.
  Moreover,~$S$ is the unique overorder of~$R$ with a prime~$\p$ with~$\type_\p(S)=3$. This prime is~$\p=2\cO_K$, which is also the conductor of~$S$ in~$\cO_K$.
  
  Using Theorem \ref{thm:cat_eq} and algorithms from \cite{MarICM18}, we compute that there are~$5$ isomorphism classes of abelian varieties with endomorphism ring~$S$, represented by fractional ideals~$S$,$I$,$I^t$,$J$ and~$S^t$. One has~$J\cong J^t$.
  Let~$A_S,A_I,A_{I^t},A_J$ and~$A_{S^t}$ be the corresponding abelian varieties, respectively; see Theorem~\ref{thm:cat_eq}.
  We have
  \[ A_S(\F_5)\cong A_{S^t}(\F_5) \cong A_{J}(\F_5) \cong \frac{\Z}{2\Z}\times \frac{\Z}{2\Z} \times \frac{\Z}{8\Z},\] 
  and
  \[ A_I(\F_5)\cong A_{I^t}(\F_5) \cong \frac{\Z}{4\Z} \times \frac{\Z}{8\Z}. \]
\end{example}

\section{The dual abelian variety}\
\label{sec:dual}
In this section, we study the relationship between an abelian variety and its dual.
We use the categorical equivalences presented in Theorem~\ref{thm:cat_eq} which build a bridge between abelian varieties and fractional ideals. 
In the first part, we will prove that we have an isomorphism~$A(\F_q)\cong A^\vee(\F_q)$ under certain conditions on the endomorphism ring~$\End(A)$ and the fractional ideal associated to~$A$.
Clearly,~$A(\F_q)\cong A^\vee(\F_q)$ whenever~$A$ is the Jacobian of a curve or, more generally, a principally polarizable abelian variety. In fact, the implication only uses that~$A$ is a self-dual abelian variety, that is,~$A\cong A^\vee$.

In the second part of the section, we investigate when an abelian variety fails to be self-dual.
In particular, we prove that~$A$ is not self-dual if its endomorphism ring~$\End(A)$ satisfies a certain local condition.
To conclude, we provide a sequence of examples comparing various properties implying and implied by self-duality.

In this section, we use the same notation as Section~\ref{sec:ord-cs}.
Specifically, we denote by~$\cI_h$ a squarefree isogeny class over~$\F_q$. 
We set~$K=\Q[x]/(h)$ and~$R=\Z[\pi,\bar \pi]$ where~$\pi$ is the class of~$x$ in~$K$.

\subsection{The group of points of the dual abelian variety}\
In Proposition~\ref{prop:A_same_grp_Avee}, building on results proven previously, we give a list of conditions that guarantee the existence of an isomorphism~$A(\F_{q^n})\cong A^\vee(\F_{q^n})$.
In Example~\ref{ex:dual_non_isom_gps}, we exhibit an example of a geometrically simple ordinary abelian variety~$A$ with~$A(\F_{q})\not \cong A^\vee(\F_{q})$.
In Example~\ref{ex:stats_dual_non_isom_gps}, we show that squarefree ordinary examples over~$\F_q$ always exist in small dimensions for small finite fields~$\F_q$.

\begin{prop}\label{prop:A_same_grp_Avee}
  Let~$A$ be in~$\cI_h$ and put~$S=\End(A)$. Fix~$n\geq 1$.
  If one of the following assumptions holds,
  then~$A(\F_{q^n}) \cong A^\vee(\F_{q^n})$.
  \begin{enumerate}[(a)]
    \item \label{prop:A_same_grp_Avee:S_conj_stable_Gor}
   ~$S=\bar{S}$ and~$S$ is Gorenstein at~$\p$ for every prime~$\p$ of~$S$ above~$(1-\pi^n)$.
    \item \label{prop:A_same_grp_Avee:S_conj_stable_Gor_type_2}
   ~$\cI_h$ satisfies {\bf Ord} or {\bf CS},~$S=\bar{S}$, and~$\type_\p(S)\leq 2$ for every prime~$\p$ of~$S$ above~$(1-\pi^n)$.
    \item \label{prop:A_same_grp_Avee:local_isos}
         ~$\cI_h$ satisfies {\bf Ord} or {\bf CS}, and one of the following holds, where~$\cF(A)=I$.
          \begin{itemize}
            \item For every prime~$\p$ of~$R$ above~$(1-\pi^n)$ we have an~$R$-linear~$I_\p \cong (\bar{I})_\p$.
            \item For every prime~$\p$ of~$R$ above~$(1-\pi^n)$ we have an~$R$-linear~$I_\p \cong (\bar{I}^t)_\p$.
          \end{itemize}
  \end{enumerate}
\end{prop}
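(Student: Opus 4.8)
The plan is to obtain all three cases by assembling results already in place, using throughout the paper's normalization in which the Frobenius of $A^\vee$ is identified with $\pi$ and hence $\End(A^\vee)$ is identified with $\bar S \subseteq K$. Note first that $A^\vee$ lies in the same squarefree isogeny class $\cI_h$, that $1-\pi^n$ is a separable isogeny on every abelian variety in $\cI_h$ (its components are $q^n$-Weil numbers of absolute value $q^{n/2}>1$, so it is in particular a non-zero-divisor in $K$, hence in $R$), and that $R=\overline R$ since $R=\Z[\pi,\bar\pi]$, so that $\bar I$ and $\bar I^t$ are again fractional $R$-ideals.

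For \textbf{parts (a) and (b)} I would argue symmetrically in $A$ and $A^\vee$. In case (a), apply Corollary~\ref{cor:Gorenstein} to $A$ to get $A(\F_{q^n})\cong S/(1-\pi^n)S$ as $S$-modules; since $\End(A^\vee)=\bar S=S$ is literally the same ring, the very same hypothesis (Gorenstein at the primes containing $1-\pi^n$) yields $A^\vee(\F_{q^n})\cong \bar S/(1-\pi^n)\bar S = S/(1-\pi^n)S$, and comparing gives $A(\F_{q^n})\cong A^\vee(\F_{q^n})$. Case (b) is identical, with Corollary~\ref{cor:Gorenstein} replaced by Proposition~\ref{prop:A_cmtype2_gp} (applicable since $\cI_h$ satisfies {\bf Ord} or {\bf CS}): one gets $\Z$-linear isomorphisms $A(\F_{q^n})\cong S/(1-\pi^n)S$ and $A^\vee(\F_{q^n})\cong \bar S/(1-\pi^n)\bar S = S/(1-\pi^n)S$, using that $\End(A^\vee)=\bar S=S$ again has type at most $2$ at the primes above $(1-\pi^n)$.

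For \textbf{part (c)} I would invoke Theorem~\ref{thm:cat_eq}. Writing $\cF(A)=I$, its parts (b) and (c) give $\Z$-linear isomorphisms $A(\F_{q^n})\cong I/(1-\pi^n)I$ and $A^\vee(\F_{q^n})\cong \bar I^t/(1-\pi^n)\bar I^t\cong \bar I/(1-\pi^n)\bar I$. The set of primes of $R$ containing $1-\pi^n$ is exactly the set of primes of $R$ above $(1-\pi^n)$, so in the first sub-case the given $R_\p$-linear isomorphisms $I_\p\cong (\bar I)_\p$ feed directly into Lemma~\ref{lemma:loc_glob_fin_mod} (with ring $R$ and $r=1-\pi^n$) to produce an $R$-linear isomorphism $I/(1-\pi^n)I\cong \bar I/(1-\pi^n)\bar I$; chaining with the isomorphisms above gives $A(\F_{q^n})\cong A^\vee(\F_{q^n})$. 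In the second sub-case, $I_\p\cong (\bar I^t)_\p$ yields in the same way $I/(1-\pi^n)I\cong \bar I^t/(1-\pi^n)\bar I^t$, and again $A(\F_{q^n})\cong A^\vee(\F_{q^n})$.

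I do not anticipate a genuine obstacle: the statement is a direct corollary of Corollary~\ref{cor:Gorenstein}, Proposition~\ref{prop:A_cmtype2_gp}, Theorem~\ref{thm:cat_eq} and Lemma~\ref{lemma:loc_glob_fin_mod}. The only point requiring care is the bookkeeping with conventions — that under the paper's normalization the Frobenius of $A^\vee$ corresponds to $\pi$ and $\End(A^\vee)$ to $\bar S$ — which is precisely what makes the hypothesis $S=\bar S$ transfer verbatim to $A^\vee$ without any need to track how complex conjugation acts on the ideal $(1-\pi^n)$.
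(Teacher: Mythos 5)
Your proposal is correct and follows essentially the same route as the paper: parts (a) and (b) by applying Corollary~\ref{cor:Gorenstein} and Proposition~\ref{prop:A_cmtype2_gp} to both $A$ and $A^\vee$ via $\End(A^\vee)=\bar S=S$, and part (c) by combining Theorem~\ref{thm:cat_eq}.\ref{thm:cat_eq:grp_pts_A}--\ref{thm:cat_eq:grp_pts_Avee} with Lemma~\ref{lemma:loc_glob_fin_mod} applied to $r=1-\pi^n$. Your extra remarks on separability of $1-\pi^n$ and the normalization of the Frobenius of $A^\vee$ only make explicit what the paper leaves implicit.
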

\begin{proof}
  Part \ref{prop:A_same_grp_Avee:S_conj_stable_Gor} 
  follows from Corollary~\ref{cor:Gorenstein}, because~$\End(A^\vee)=\bar S=S$.
  Part \ref{prop:A_same_grp_Avee:S_conj_stable_Gor_type_2} follows similarly from Proposition~\ref{prop:A_cmtype2_gp}.
  By Theorem~\ref{thm:cat_eq}, we have~$\Z$-linear isomorphisms
  \[ 
    A(\F_{q^n})\cong I\otimes_R\frac{R}{(1-\pi^n)R}
    \quad\text{and}\quad 
    A^\vee(\F_{q^n})\cong \bar I\otimes_R\frac{R}{(1-\pi^n)R}
                    \cong \bar I^t\otimes_R\frac{R}{(1-\pi^n)R}
  . \] 
  Combined with Lemma \ref{lemma:loc_glob_fin_mod}, this proves Part \ref{prop:A_same_grp_Avee:local_isos}.
\end{proof}
\begin{example}\label{ex:dual_non_isom_gps}
  Consider the isogeny class~$\cI_h$ of ordinary abelian surfaces over~$\F_4$ determined by the polynomial~$h=x^4 + 2x^3 + x^2 + 8x + 16$. 
  According to the LMFDB \cite{LMFDB}, this isogeny class, which has label \href{http://www.lmfdb.org/Variety/Abelian/Fq/2/4/c_b}{2.4.c\_b}, is geometrically simple and contains a Jacobian.
  Let~$\cO_K$ be the maximal order of~$K$.
  We have that~$2\cO_K = \p^2\bar\p^2$ where~$\p$ is a prime of~$\cO_K$.
  Consider the order~$S=R+\p^2$.
  It turns out that~$R$ has~$3$ overorders, namely,~$S$,$\bar S$ and~$\cO_K$, and all of these orders are Gorenstein.
  Using Theorem~\ref{thm:cat_eq}, there is an abelian variety~$A$ with~$\End(A) = S$, hence~$\End(A^\vee) = \bar S$.
  Using Corollary~\ref{cor:Gorenstein}, one computes that~$A(\F_4)$ and~$A^\vee(\F_4)$ are not isomorphic. Indeed, they are
  \[ \frac{\Z}{28\Z}\quad\text{and}\quad \frac{\Z}{2\Z}\times\frac{\Z}{14\Z}. \]
\end{example}
\begin{example}\label{ex:stats_dual_non_isom_gps}
  Consider the following set of pairs of positive intgers~$(g,q)$:
  \begin{align*}
    & \{ (2,q) : 2\leq q \leq 128 \text{ is a prime power } \}  \\
    &\cup \{ (3,q) : 2\leq q \leq 9 \text{ is a prime power }\} \cup \{ (3,16), (3,25) \} \\
    &\cup \{ (4,q) : q\in \{2,3,4\} \}  \\ 
    &\cup \{ (5,2) \}. 
  \end{align*}
  For each pair~$(g,q)$ in the set above, there is a squarefree ordinary abelian variety~$A$ of dimension~$g$ over~$\F_q$ satisfying
  \[ A(\F_q) \not \cong A^\vee(\F_q). \]
\end{example}

\subsection{Self-duality}
Recall the following well-known theorem.
\begin{thm} \label{thm:jac_ppav_implications}
  If~$A$ is an abelian variety over~$\F_q$, then each statement below implies the next:
  \begin{enumerate}[(a)]
    \item~$A$ is a Jacobian variety;\label{it:jac}
    \item~$A$ is a principally polarizable abelian variety;\label{it:ppav}
    \item~$A \cong A^\vee$ is self-dual;\label{it:sd}
    \item~$A(\F_q) \cong A^\vee(\F_q)$ are isomorphic groups. \label{it:gp_dual_same}
    \setcounter{enum_counter}{\value{enumi}}
    \addtocounter{enum_counter}{-1}
  \end{enumerate}
When~$A$ is squarefree, the following item~\ref{it:end_stable} is also implied by self-duality~\ref{it:sd}:
  \begin{enumerate}[(a')]
    \setcounter{enumi}{\value{enum_counter}}
    \item ~$\End(A) = \overline{\End(A)}$ is stable under complex conjugation.\label{it:end_stable}
  \end{enumerate}
\end{thm}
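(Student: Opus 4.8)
The plan is to verify the chain of implications one link at a time, since each is either classical or follows immediately from the conventions set up in the introduction.

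For $\ref{it:jac}\Rightarrow\ref{it:ppav}$ I would simply invoke the classical fact that the Jacobian of a smooth projective geometrically connected curve over $\F_q$ carries a canonical principal polarization, arising from the theta divisor together with the autoduality of the Jacobian; a standard reference on Jacobian varieties suffices. For $\ref{it:ppav}\Rightarrow\ref{it:sd}$ I would unwind the definitions: a principal polarization is by definition a polarization $\lambda\colon A\to A^\vee$ that is an isomorphism, whence $A\cong A^\vee$. For $\ref{it:sd}\Rightarrow\ref{it:gp_dual_same}$ I would note that any isomorphism $A\xrightarrow{\sim}A^\vee$ of abelian varieties over $\F_q$ restricts to an isomorphism of groups $A(\F_q)\xrightarrow{\sim}A^\vee(\F_q)$ on $\F_q$-rational points.

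The one link that genuinely requires an argument is $\ref{it:sd}\Rightarrow\ref{it:end_stable}$, and this is where I would spend the effort. Fix an isomorphism $\phi\colon A\xrightarrow{\sim}A^\vee$ over $\F_q$ and let $\Phi\colon\End(A)\to\End(A^\vee)$ be the induced ring isomorphism $\psi\mapsto\phi\circ\psi\circ\phi^{-1}$. The point is that $\phi$, being defined over $\F_q$, commutes with the $q$-power Frobenius, so $\phi\circ\pi_A=\pi_{A^\vee}\circ\phi$ and hence $\Phi(\pi_A)=\pi_{A^\vee}$. Recall from the introduction that both $\End(A)\otimes_\Z\Q$ and $\End(A^\vee)\otimes_\Z\Q$ have been identified with $K=\Q[x]/(h)$ so that the respective Frobenius endomorphisms correspond to $\pi$. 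Under these identifications $\Phi\otimes_\Z\Q$ becomes a $\Q$-algebra automorphism of $K$ fixing $\pi$; since $A$ is squarefree we have $K=\Q[\pi]$, so this automorphism is the identity on $K$. Hence $\End(A)$ and $\End(A^\vee)$ coincide as subrings of $K$, and combining this with the identity $\End(A^\vee)=\overline{\End(A)}$ recorded in the introduction gives $\End(A)=\overline{\End(A)}$, which is \ref{it:end_stable}.

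The main, and essentially only, obstacle is this last step: one must not argue merely from the existence of an abstract ring isomorphism $\End(A)\cong\End(A^\vee)$, which does not by itself locate $\End(A^\vee)$ inside $K$. What makes it work is that $\phi$ is an $\F_q$-morphism, so $\Phi$ respects Frobenius and therefore the chosen identifications with $K$; the squarefree hypothesis then forces $\Phi\otimes_\Z\Q$ to be the identity. Everything else is routine.
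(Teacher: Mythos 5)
Your proof is correct. The paper states this theorem without proof, labelling it ``well-known,'' so there is no argument to compare against; your write-up supplies exactly the standard justifications, and in particular your handling of the implication from self-duality to $\End(A)=\overline{\End(A)}$ --- using that an $\F_q$-isomorphism $\phi\colon A\to A^\vee$ intertwines the two Frobenius endomorphisms, so that under the paper's fixed identifications of both endomorphism algebras with $K=\Q[\pi]$ the conjugation map $\psi\mapsto\phi\circ\psi\circ\phi^{-1}$ becomes the identity on $K$, whence $\End(A)=\End(A^\vee)=\overline{\End(A)}$ by the paper's convention $\End(A^\vee)=\overline{\End(A)}$ --- correctly isolates and resolves the one step that genuinely needs care.
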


We already observed in Example~\ref{ex:dual_non_isom_gps} that properties~\ref{it:gp_dual_same} and \ref{it:end_stable} do not always hold.
Now we show that there are counterexamples to all of the reverse implications in Theorem~\ref{thm:jac_ppav_implications}.
For \ref{it:ppav}$\nRightarrow$\ref{it:jac}, see Example~\ref{ex:pp_not_jac}, and
for \ref{it:sd}$\nRightarrow$\ref{it:ppav}, see Example~\ref{ex:sd_not_pp}.
In Example~\ref{ex:same_end_not_sd}, we show that \ref{it:gp_dual_same} and \ref{it:end_stable} combined do not imply~\ref{it:sd}.
Moreover, in Example~\ref{ex:same_gp_not_same_end}, we show that \ref{it:gp_dual_same}$\nRightarrow$\ref{it:end_stable}, which is another exhibition that~\ref{it:end_stable}$\nRightarrow$\ref{it:sd}.
Note that we have the implication \ref{it:end_stable}$\Rightarrow$\ref{it:gp_dual_same} under certain hypotheses; see Proposition~\ref{prop:A_same_grp_Avee}.

The following example is well-known, but we record it for completeness.
\begin{example}[Principally polarizable but not Jacobian]
  \label{ex:pp_not_jac}
  It is easy to find principally polarizable varieties which are not Jacobians. For example, there are currently 30{\small,}079 geometrically simple ordinary isogeny classes on LMFDB which contain a principally polarizable abelian variety, but no Jacobian varieties; see \cite{LMFDB}.
\end{example}

\begin{example}[Self-dual but not principally polarizable]
  \label{ex:sd_not_pp}
  If~$\cI_h$ is a simple ordinary isogeny class, then the class number of the field~$K = \Q[x]/(h)$ is equal to the number of abelian varieties in~$\cI_h$ whose endomorphism ring is maximal; see \cite[Thm.~6.2]{Wat69}. In particular, if~$K$ has class number 1, then any abelian variety~$A$ in~$\cI_h$ whose endomorphism ring is~$\End(A) = \cO_K$ must be self-dual. It is easy to find isogeny classes satisfying this property which do not contain any principally polarizable abelian varieties by using \cite[Thm.~1.3]{Howe95}, for example. See the LMFDB \cite{LMFDB} isogeny class 
  \href{http://www.lmfdb.org/Variety/Abelian/Fq/2/2/ab_ab}{2.2.ab\_ab}
  for a concrete ordinary example, or
  \href{http://www.lmfdb.org/Variety/Abelian/Fq/4/2/ad_c_f_an}{4.2.ad\_c\_f\_an}
  for one which is also geometrically simple.
\end{example}

One way to find examples of abelian varieties which are not self-dual is to first use the categorical equivalence in Theorem~\ref{thm:cat_eq} to compute all isomorphism classes,
and then use Theorem~\ref{thm:cat_eq}.\ref{thm:cat_eq:Avee} to determine which classes are self-dual.
Alternatively, one may use the following proposition which only requires inspecting the local properties of orders in the endomorphism algebra.
The latter technique easily finds Example~\ref{ex:same_end_not_sd}.
\begin{prop}\label{prop:not_self_dual}
  Let~$A$ be any abelian variety in~$\cI_h$ satifying {\bf Ord} or {\bf CS},
   let~$S$ be an order in~$K$ such that~$S=\bar{S}$, and let~$\p$ be a prime of~$S$ such that~$\type_\p(S)=2$ and~$\p=\bar{\p}$ .
  If~$S\subseteq \End(A)$ and~$S_\p = \End(A)_\p$, then~$A$ is not self-dual.
  In particular, such an~$A$ is not principally polarizable and cannot be a Jacobian.
\end{prop}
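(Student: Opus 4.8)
The strategy is to contradict self-duality by distinguishing $A$ from $A^\vee$ via their associated fractional ideals under the categorical equivalence $\cF$ of Theorem~\ref{thm:cat_eq}. Write $T = \End(A)$ and $\cF(A) = I$, so that $(I:I) = T$. By Theorem~\ref{thm:cat_eq}.\ref{thm:cat_eq:Avee}, $\cF(A^\vee) = \bar I^t$, and since $\cF$ is an equivalence, $A \cong A^\vee$ if and only if $I \cong \bar I^t$ as $R$-modules. Thus it suffices to show $I \not\cong \bar I^t$.

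First I would reduce to a local statement at $\p$. If $I \cong \bar I^t$ globally, then in particular $I_\p \cong (\bar I^t)_\p$ as $S_\p$-modules (note $S_\p = T_\p$ by hypothesis, so the $S_\p$-module structure makes sense and agrees with the $T_\p$-structure). So it is enough to show $I_\p \not\cong (\bar I^t)_\p$. Here is where Proposition~\ref{prop:cmtype2notselfdual} enters: we are given $\type_\p(S) = 2$ and $\p = \bar\p$, and we need $(I:I)_\p = S_\p$. Since $(I:I) = T$ and $T_\p = S_\p$ by hypothesis, we have $(I:I)_\p = T_\p = S_\p$, so Proposition~\ref{prop:cmtype2notselfdual} applies directly and yields $I_\p \not\cong (\bar I^t)_\p$, the desired contradiction. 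This gives $A \not\cong A^\vee$.

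The ``in particular'' clauses are then immediate from Theorem~\ref{thm:jac_ppav_implications}: a principally polarizable abelian variety satisfies $A \cong A^\vee$ (implication \ref{it:ppav}$\Rightarrow$\ref{it:sd}), and a Jacobian is principally polarizable (implication \ref{it:jac}$\Rightarrow$\ref{it:ppav}), so the failure of self-duality rules out both.

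\textbf{Main obstacle.} The substantive content is entirely packaged in Proposition~\ref{prop:cmtype2notselfdual}, so the only real work in this proof is verifying its hypotheses carefully — in particular, checking that the multiplicator ring condition $(I:I)_\p = S_\p$ holds, which follows because $S \subseteq \End(A)$ forces $(I:I) = \End(A) \supseteq S$ while the localization hypothesis $S_\p = \End(A)_\p$ collapses the inclusion at $\p$. One subtlety worth stating explicitly is why $\cF(A) = I$ has multiplicator ring exactly $\End(A)$ rather than something larger; this is part of the structure of the equivalence in Theorem~\ref{thm:cat_eq} (the functor $\cF$ matches endomorphism rings with multiplicator rings), and should be invoked rather than reproved. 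Beyond that, the argument is a short bookkeeping exercise translating between abelian varieties, fractional ideals, and local module isomorphisms.
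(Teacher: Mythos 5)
Your proposal is correct and follows essentially the same route as the paper, whose proof is exactly the one-line combination of Proposition~\ref{prop:cmtype2notselfdual} with Theorem~\ref{thm:cat_eq}.\ref{thm:cat_eq:Avee}; you have merely spelled out the bookkeeping (reduction to the local statement at~$\p$ and the verification that $(I:I)_\p=S_\p$), which is accurate.
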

\begin{proof}
  This follows from Proposition~\ref{prop:cmtype2notselfdual} and Theorem~\ref{thm:cat_eq}.\ref{thm:cat_eq:Avee}.
\end{proof}
  

\begin{example}[Same endomorphism ring but not self-dual]\
  \label{ex:same_end_not_sd}\
  We go back to the isogeny class~$\cI_h$ from Example~\ref{ex:diff_grps_same_end}.
  One computes that~$R$ has a unique minimal overorder~$T$, and~$[T:R]=2$. Such an order is then necessarily stable under complex conjugation, that is,~$T=\bar{T}$. Also,~$T$ has a unique prime~$\mathfrak{q}$ above~$2$ which also then satisfies~$\mathfrak{q}=\bar{\mathfrak{q}}$. 
  This prime is the unique non-invertible prime of~$T$ and we have~$\type_{\mathfrak{q}}(T)=2$.
  Recall that abelian varieties in~$\cI_h$ with endomorphism ring~$T$ exist by Theorem~\ref{thm:cat_eq}.
  By Proposition~\ref{prop:A_cmtype2_gp}, every abelian variety with endomorphism ring~$T$ has group of rational points isomorphic to~$T/(1-\pi)T$.
  On the other hand, by Proposition~\ref{prop:not_self_dual}, we see that~$A\not\cong A^\vee$ for every abelian variety~$A$~with endomorphism ring~$T$.
\end{example}

We observe that the non-self-dual abelian variety~$A$ found in Example~\ref{ex:same_end_not_sd} also satisfies~$A(\F_5)\cong A^\vee(\F_5)$, thereby exhibiting \ref{it:gp_dual_same}$\nRightarrow$\ref{it:sd} in Theorem~\ref{thm:jac_ppav_implications}. This is also exhibited in the following example, which additionally proves \ref{it:gp_dual_same}$\nRightarrow$\ref{it:end_stable} in the same theorem.

\begin{example}[Isomorphic groups but different endomorphism ring]\
  \label{ex:same_gp_not_same_end}\
  Consider the ordinary isogeny class~$\cI_h$ of abelian surfaces defined over~$\F_3$ determined by the polynomial
  \[ h = x^4 - x^3 + 4x^2 - 3x + 9=(x^2 - 2x + 3)(x^2 + x + 3). \]
  The order~$R=\Z[\pi,\bar \pi]$ has index~$[\cO_K:R]=9$ in the maximal order~$\cO_K$ of~$K$. 
  Since~$h(1)=10$ is coprime with the conductor~$(R:\cO_K)$, we deduce that~$\cI_h$ is cyclic, by Theorem~\ref{thm:coprime_iff_cyclic}. 
  Moreover, since~$10$ is a squarefree integer, we get that~$\cI_h$ is also trivially rich; see Theorem~\ref{thm:rich_condition}.

  We observe that~$R$ has exactly two primes above the singular rational prime~$3$. These two primes are complex conjugates to each other, and we denote them by~$\p$ and~$\bar \p$. 
  There are only two orders between~$R$ and~$\cO_K$, both with index~$3$. These can be realized as the multiplicator rings~$S=(\p:\p)$ and~$\bar S=(\bar \p:\bar \p)$.
  By Theorem~\ref{thm:cat_eq}, we conclude that there is an abelian variety~$A$ in~$\cI_h$ such that~$\End(A)=S$ and~$\End(A^\vee)=\bar{S}$ are not equal, but~$A(\F_3)\cong A^\vee(\F_3) \cong \Z/10\Z$. 
\end{example}

\bibliographystyle{amsalpha}
\renewcommand{\bibname}{References} 
\bibliography{references} 

\end{document}